\newcommand{\Rmnum}[1]{\expandafter\@slowromancap\romannumeral #1@}
\begin{document}
\title{Generalized Erd\H{o}s-Tur\'an inequalities and \\stability of energy minimizers}
\author{Ruiwen Shu and Jiuya Wang}
\newcommand{\Addresses}{{
	    \footnotesize		
		Ruiwen Shu, \textsc{Mathematical Institute, University of Oxford, Oxford OX2 6GG, UK
		}\par\nopagebreak
		\textit{E-mail address}: \texttt{shu@maths.ox.ac.uk}	
		
		\bigskip
		\footnotesize		
		Jiuya Wang, \textsc{Department of Mathematics, University of Georgia, Boyd Graduate Studies Research Center, Athens, GA 30601, USA
		}\par\nopagebreak
		\textit{E-mail address}: \texttt{jiuya.wang@uga.edu}	
	}}
\maketitle	
\vspace{-0.5cm}
	\begin{abstract}
		The classical Erd\H{o}s-Tur\'an inequality on the distribution of roots for complex polynomials can be equivalently stated in a potential theoretic formulation, that is, if the logarithmic potential generated by a probability measure on the unit circle is close to $0$, then this probability measure is close to the uniform distribution. We generalize this classical inequality from $d=1$ to higher dimensions $d>1$ with the class of Riesz potentials which includes the logarithmic potential as a special case. In order to quantify how close a probability measure is to the uniform distribution in a general space, we use Wasserstein-infinity distance as a canonical extension of the concept of discrepancy. Then we give a compact description of this distance. Then for every dimension $d$, we prove inequalities bounding the Wasserstein-infinity distance between a probability measure $\rho$ and the uniform distribution by the $L^p$-norm of the Riesz potentials generated by $\rho$. Our inequalities are proven to be sharp up to the constants for singular Riesz potentials. Our results indicate that the phenomenon discovered by Erd\H{o}s and Tur\'an about polynomials is much more universal than it seems. Finally we apply these inequalities to prove stability theorems for energy minimizers, which provides a complementary perspective on the recent construction of energy minimizers with clustering behavior.
	\end{abstract}
\bf Keywords. \normalfont Erd\H{o}s-Tur\'an inequality, energy minimization, Wasserstein distance, discrepancy, potential theory, stability
\pagenumbering{arabic}
%\pagestyle{myheadings}
%\tableofcontents

\section{Introduction}
\subsection{Main Theorem}
In 1950, Erd\H{o}s and Tur\'an \cite{ET} prove a classical inequality on the distribution of roots of a complex polynomial $f(z)\in \C[z]$. The inequality characterizes the phenomenon that if $f(z)$ attains small value on the unit circle, then the angular distribution of the roots of $f(z)$ is close to equidistribution. Let $f(z) = \sum_{k=0}^{n} a_k z^k  \in \C[z]$ be a polynomial where $a_0a_n\neq 0$ and denote its roots by $r_j e^{2\pi i\theta_j}$ for $1\le j\le n$ with $\theta_j \in \bT =\mathbb{R}/\mathbb{Z}$. For $ \alpha\le \beta < \alpha+1$, we write $N_f(\alpha, \beta)$ to be the number of roots with $\theta_j \in [\alpha, \beta]$ when considered as a subset in $\bT$. We define for a polynomial $f$ that
\begin{equation}
	\mathcal{D}[f]:= \max_{ \alpha\le \beta< \alpha+1} \frac{N_f(\alpha, \beta)}{n}-(\beta-\alpha), \quad 	\mathcal{H}[f]:=\frac{1}{n} \log \frac{\max_{|z|=1}|f(z)|}{\sqrt{|a_0a_n|}},
\end{equation}
where $\cD[f]$ is the \emph{discrepancy} of $f$ and measures the deviation of the angle distribution of roots away from the uniform distribution on the unit circle, and $\cH[f]$ is the \emph{height} of $f$. Then the Erd\H{o}s-Tur\'an inequality states that there exists an absolute constant $C$ such that 
\begin{equation}\label{eqn:ET}
	\cD[f] \le C \cH[f]^{1/2}.
\end{equation}

%\jiuya{Should we give the potential statement directly?}
In a recent work \cite{SW21}, the authors prove that the optimal constant in \eqref{eqn:ET} is $C = \sqrt{2}$. One of the main ideas in \cite{SW21} is to consider inequality \eqref{eqn:ET} for all probability measures in a potential theoretic formulation. An observation due to Schur \cite{Schur} shows that to prove \eqref{eqn:ET} it suffices to consider polynomials $f$ with all roots on the unit circle. Therefore by extending $\cD$ and $\cH$ to all probability distributions as
%
%In \cite{SW21}, we extend the definitions of the discrepancy and the height to all probability measures $\rho \in \cM$ on the unit circle $\bT$ by 
\begin{equation}\label{eqn:D-H-def}
	\cD[\rho] = \sup_{I\subset \bT} \int_{I} (\rho-1) \rd{x}, \quad \cH[\rho] = - \ess\inf (W_{\log}*\rho),
\end{equation}
where the supreme is taken over closed intervals $I$ of $\bT$ and $W_{\log}(x) = -\log |2\sin \pi x|$ for $x \in \bT $, the authors turn \eqref{eqn:ET} from a discrete question to a continuous one. 
%
%the authors prove a sharp version of \eqref{eqn:ET} for all $\rho \in \cM$ in \cite{SW21}.
%
%By \cite{Wass},  the discrepancy of $\rho \in \cM$ exactly coincides with half of the Wasserstein-infinity distance $d_{\infty}(\rho, 1_{\bT})$. 
In potential theory, given an interaction potential $W$, we denote $V_{\rho}:= W*\rho$ to be the total potential generated by $\rho$ under the potential $W$. Then combining Schur's observation, Erd\H{o}s-Tur\'an inequality can be equivalently stated in the following potential theoretic formulation 
%
%(see \cite[Section 1.3.3]{SW21} for more details)
\begin{equation}\label{eqn:ET-potential}
\cD[\rho] \le \sqrt{2} \cdot \| (V[1_{\mathbb{T}}]-V[\rho])_+ \|_{L^\infty}^{1/2} = \sqrt{2} \cdot \| (-W*\rho)_{+} \|^{1/2}_{L^\infty},
\end{equation}
where the uniform distribution $1_{\bT}$ is the unique probability distribution such that $V[1_{\bT}] = 0$, see \cite[Theorem $3.6$]{SW21} for an argument. Under this formulation, the inequality states that if the total potential generated by $\rho$ is close to the total potential generated by $1_{\bT}$, then $\rho$ is also close to $1_{\bT}$. 

The original inequality \eqref{eqn:ET} in \cite{ET} is stated in terms of polynomials since the motivation of Erd\H{o}s and Tur\'an lies in number theory and complex analysis, however using the polynomial formulation will force us to take the logarithmic potential. Via stating this inequality in terms of the potential theoretic formulation, we eliminate this restriction and fit this question into a much more general framework. 

In this paper, we will generalize the Erd\H{o}s-Tur\'an inequality \eqref{eqn:ET-potential} in the following directions:
\begin{itemize}
	\item 
	\textbf{Dimension}: instead of only considering probability measures over $\bT$, we will consider $\bT^d$ for general $d$.
	\item
	\textbf{Potential}: instead of only using the logarithmic potential $W_{\log} = -\log|2\sin \pi x|$, we will consider the class of periodized Riesz potentials. For $s<d$, the periodized Riesz potential $W_s$ is defined by
\begin{equation}\label{W}
	\hat{W}_s(\bk)= |\bk|^{-d+s},\,\forall 0\ne \bk \in\mathbb{Z}^d,\quad \hat{W}(0)=0.
\end{equation}
In fact $W_{\log} = -\log|2 \sin \pi x|$ is the special case of $W_s$ with $d=1,\,s=0$.
	\item
	\textbf{Height}: instead of measuring $W*\rho$ by $\cH[\rho]$ which is close to an $L^{\infty}$-norm of $W*\rho$, we measure $W*\rho$ by its $L^p$-norm. 
\end{itemize}

A critical issue in generalizing this inequality to higher dimension is how to generalize the notation of \emph{discrepancy} in $d=1$. 
%We mention that generalizations of Erd\H{o}s-Tur\'an inequality \eqref{eqn:ET} to higher dimensions are previously considered in \cite{sjogrenhighdim,gotz,kleiner1964equilibrium}. A critical issue in generalizing this inequality to higher dimension is how to generalize the notation of \emph{discrepancy} in $d>1$. The discrepancy for $d = 1$ is defined to be the supreme value of $\int_I (\rho-1) \rd{x}$ among all closed intervals $I\subset \bT$. Therefore in all previous discussions on higher dimensions, a similar definition for discrepancy is given by taking the supreme value of $\int_S (\rho-1) \rd{x}$ where the testing set $S$ is varying in a certain class of sets. Very often, e.g. in \cite{sjogrenhighdim,gotz}, the class of sets is restricted to be very nice and exclude those testing set $S$ with large surface area. This is technically convenient, nevertheless excludes interesting scenarios where the discrepancy is witnessed by skew-shaped $S$. Moreover in \cite{sjogrenhighdim,gotz}, the implicit constants in inequalities depend on extra parameters measuring how nice $S$ is.
We adopt the view in \cite[Proposition 2]{Wass} that the discrepancy $\cD[\rho]$ for $d=1$ is equivalent to the Wasserstein-infinity distance between $\rho$ and the uniform distribution $1_{\bT}$, that is,
\begin{equation}\label{D1}
\frac{1}{2}\cD[\rho]  = d_\infty(\rho,1_{\bT}),  \text{  for } \rho\in \cM(\mathbb{T}).
\end{equation}
We recall that for a locally compact topological space $X$ with a distance function $\dist(\cdot, \cdot)$, the Wasserstein infinity distance between two probability measures $\rho_1,\rho_2$ on $X$ is defined as
\begin{equation}\label{dinfty}
	d_\infty(\rho_1,\rho_2): = \inf_{\mu\in \Pi(\rho_1,\rho_2)} \sup_{(x,y)\in \supp\mu} \dist(x,y),
\end{equation}
where $\Pi(\rho_1,\rho_2)$ is the set of transport plans from $\rho_1$ to $\rho_2$, i.e., those probability measures $\mu(x,y)$ on $X\times X$ with $\int_X \mu(x,y)\rd{y} = \rho_1(x)$ and $\int_X \mu(x,y)\rd{x} = \rho_2(y)$. Therefore we can generalize the notion of discrepancy in a canonical way as long as the underlying space $X$ is locally compact and equipped with a metric.

\begin{theorem}\label{thm1}
	Let $s<d$, $1\le p < \infty$ and $\rho\in \cM(\bT^d)$. 
	\begin{enumerate}
		\item[\textnormal{(i)}]  If $1\le p \le d$, then 
		\begin{equation}\label{thm1_1}
			d_\infty(\rho,1) \lesssim \|W_s*\rho\|_{L^p}^\gamma,\quad \gamma = \frac{1}{d+d/p-s}.
		\end{equation}
		\item[\textnormal{(ii)}] If $d=1$, $p>1$, then
		\begin{enumerate}
			\item If $s < \frac{1}{p}$, then \eqref{thm1_1} also holds.
			\item If $s = \frac{1}{p}$, then
			\begin{equation}\label{thm1_2}
				d_\infty(\rho,1) (1+|\log d_\infty(\rho,1)|)^{-1+1/p} \lesssim  \|W_s*\rho\|_{L^p}.
			\end{equation}
			\item If $ \frac{1}{p} < s < 1$, then
			\begin{equation}\label{thm1_3}
				d_\infty(\rho,1)  \lesssim \|W_s*\rho\|_{L^p}.
			\end{equation}
		\end{enumerate}
	    	\item[\textnormal{(iii)}]   \textnormal{(i)} and \textnormal{(ii)} are sharp up to the constants when $0<s<d$ with the exception $d = 1$ and $s = 1/p$.
	\end{enumerate}
	Here the implied constants may depend on $d$, $p$, and $s$. 
\end{theorem}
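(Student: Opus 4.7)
The plan is to combine a duality argument for Riesz potentials with the compact description of the Wasserstein-infinity distance established earlier in the paper. The core local estimate I would establish is: for any ball $B_\epsilon(x_0) \subset \bT^d$,
\begin{equation*}
\bigl|\rho(B_\epsilon(x_0)) - |B_\epsilon|\bigr| \le \|\psi_\epsilon\|_{L^{p'}} \|W_s * \rho\|_{L^p}, \qquad \|\psi_\epsilon\|_{L^{p'}} \lesssim \epsilon^{s - d/p},
\end{equation*}
where $\psi_\epsilon$ is defined so that $W_s * \psi_\epsilon = 1_{B_\epsilon(x_0)} - |B_\epsilon|$.

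To derive this, note that $1_{B_\epsilon(x_0)} - |B_\epsilon|$ has vanishing Fourier mode at $k = 0$, so since $\hat{W}_s$ only vanishes at $k = 0$, one can define $\psi_\epsilon$ via $\hat\psi_\epsilon(k) = |k|^{d-s}\, \hat{1}_{B_\epsilon(x_0)}(k)$ for $k \neq 0$ (and $\hat\psi_\epsilon(0) = 0$) so that $W_s * \psi_\epsilon = 1_{B_\epsilon(x_0)} - |B_\epsilon|$. Pairing against $\rho$ yields $\rho(B_\epsilon(x_0)) - |B_\epsilon| = \int \psi_\epsilon (W_s * \rho) \, dx$, whence Hölder's inequality gives the first claim. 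The scaling identity $\psi_\epsilon(x) = \epsilon^{s-d} \psi_1(x/\epsilon)$ (arising from the rescaling of the fractional Laplacian $(-\Delta)^{(d-s)/2} 1_{B_1}$) then produces $\|\psi_\epsilon\|_{L^{p'}} \lesssim \epsilon^{s - d/p}$, provided $\psi_1 \in L^{p'}$.

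Next, I would invoke the compact description of $d_\infty$ to extract a ball of controlled excess mass. Assuming $d_\infty(\rho, 1) \ge r$, there exist $x_0$ and $R^* > 0$ with $\rho(B_{R^*}(x_0)) \ge |B_{R^* + r}(x_0)|$, and combined with $(R^* + r)^d \ge (R^*)^d + r^d$ (for $d \ge 1$) this yields $\rho(B_{R^*}) - |B_{R^*}| \ge c_d r^d$. Choosing the optimizing scale $R^* \sim r$ and applying the local estimate at $\epsilon = R^*$:
\begin{equation*}
c_d r^d \lesssim r^{s - d/p} \|W_s * \rho\|_{L^p} \quad \Longrightarrow \quad r \lesssim \|W_s * \rho\|_{L^p}^{1/(d + d/p - s)} = \|W_s * \rho\|_{L^p}^\gamma,
\end{equation*}
settling (i) and (ii)(a). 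For (ii)(b) with $s = 1/p$ and $d = 1$, the scaling exponent $s - d/p$ vanishes, and a refined estimate of $\|\psi_\epsilon\|_{L^{p'}}$ (splitting low/high-frequency contributions) produces the logarithmic loss in \eqref{thm1_2}. For (ii)(c) with $s > 1/p$ and $d = 1$, the potential $W_s * \rho$ lies in $L^\infty$ by Sobolev embedding, and the linear bound \eqref{thm1_3} follows via pointwise analysis.

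For the sharpness statement (iii), I would construct $\rho = 1 + c\, (1_{B_r(x_0)} - 1_{B_r(x_1)})/|B_r|$ with $|x_0 - x_1| \sim r$ and $c$ of order one, verify $d_\infty(\rho, 1) \sim r$ via explicit transport (monotone rearrangement in $d = 1$; direct transport analysis in higher dimensions), and compute $\|W_s * \rho\|_{L^p} \sim r^{1/\gamma}$ from near- and far-field estimates of the Riesz potentials of balls. The main anticipated obstacle is the rigorous justification of the $L^{p'}$ scaling for $\psi_\epsilon$ on the torus (as opposed to $\R^d$), together with the endpoint case $s = 1/p$ in $d = 1$ where pure power scaling breaks down; a secondary subtlety is that the violating radius $R^*$ supplied by the compact description is not a priori comparable to $r$, so one may need an additional case analysis (keyed to the concentration scale of $\rho$) to realize the optimal balance $R^* \sim r$.
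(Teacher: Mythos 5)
Your proposal has genuine gaps in all three parts, and each gap coincides with a step where the paper introduces machinery that your sketch omits.

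\textbf{The duality kernel is not integrable.} The function $\psi_\epsilon$ with $W_s * \psi_\epsilon = \chi_{B_\epsilon} - |B_\epsilon|$ is $(-\Delta)^{(d-s)/2}(\chi_{B_\epsilon}-|B_\epsilon|)$, which has a boundary singularity of order $\dist(\cdot,\partial B_\epsilon)^{-(d-s)}$. This lies in $L^{p'}$ only when $(d-s)p'<1$, i.e. $s>d-1+1/p$; that excludes essentially the whole parameter range of part (i) ($s<d$, including $s\le 0$), so the ``core local estimate'' $\|\psi_\epsilon\|_{L^{p'}}\lesssim \epsilon^{s-d/p}$ is simply not available. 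The paper avoids this by mollifying: the test function is $g=\chi_{\tilde S}*\psi_r$, so that on the Fourier side the multiplier $\hat\psi(r\cdot)|\cdot|^{d-s}$ decays rapidly, and the relevant $L^q$-norm is controlled via Young's inequality plus the quantitative decay in Lemma~\ref{lem_u}. You would need to insert a mollifier here as well; without it the pairing $\int\psi_\epsilon\,(W_s*\rho)\,\rd\bx$ is not justified by H\"older.

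\textbf{The witnessing set is not a ball.} Theorem~\ref{lem_dinfty} produces an arbitrary measurable set $S$ with $\rho(S)>|S_r|$, not a ball, and in $d\ge 2$ there is no reason the supremum is witnessed (or even approximated) by balls. This is exactly the difficulty that forces the paper to introduce the regularization $\tilde S=\Reg_{2r}(S_r)$, the layer estimate (Lemma~\ref{lem_layer}), and the torus isoperimetric inequality via Loomis--Whitney (Lemma~\ref{lem_iso}), in order to compare $|S_{3r}\backslash S_{2r}|$ with $|S_{2r}\backslash S_r|$ when $S$ is geometrically complicated. Your caveat that $R^*$ might not be $\sim r$ misses the more basic point that no ball of any radius need carry the excess mass; only in $d=1$, where $S$ can be taken to be an interval by \eqref{D1}, does this reduction work, and the paper treats that case by a separate argument in Section~\ref{sec_d1} (using the one-dimensional structure and boundedness of the Hilbert transform, not Sobolev embedding into $L^\infty$, which in any case would go in the wrong direction here).

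\textbf{The dipole does not give sharpness.} With $\rho=1+c(\chi_{B_r(x_0)}-\chi_{B_r(x_1)})/|B_r|$ and $|x_0-x_1|\sim r$, the far field of $W_s*\rho$ decays only like $r^{d+1}|\bx|^{-s-1}$, and $\int_r^1 (r^{d+1}t^{-s-1})^p t^{d-1}\rd t$ dominates the target $r^{p(d+d/p-s)}$ whenever $p(s+1)<d$ (e.g. $d$ large, $p$ moderate, $s$ near $0$). A single dipole does not cancel enough moments. The paper's construction $\rho=1+c_0\epsilon^d\Psi_{M,\epsilon}$ with $\Psi_M=(-\Delta)^M\psi$ and $M$ large uses the moment-vanishing Lemma~\ref{lem_M} to force $W_s*\rho$ to decay like $\epsilon^{d+2M}|\bx|^{-s-2M}$, making the far field negligible for every admissible $p,s,d$. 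Replacing your dipole by a sufficiently high-order finite difference would repair this, but as written the construction fails.
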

\begin{remark}
	For $d=1$, \cite{Wass} shows that $d_\infty(\rho,1) = \|\rho-1\|_{\dot{W}^{-1,\infty}}$ is a negative Sobolev norm, and the same is true for $\|W_s*\rho\|_{L^2} = \|\rho-1\|_{\dot{H}^{-1+s}}$. Therefore,  for the case $p=2$, one can view \eqref{thm1_3} as a Sobolev embedding. However, in the case $s<1/p = 1/2$, the inequality \eqref{thm1_1} cannot be viewed in this way because the homogeneous degrees on its two sides are different. In other words, although $\|\rho-1\|_{\dot{W}^{-1,\infty}}^{3/2-s} \le C\|\rho-1\|_{\dot{H}^{-1+s}}$ is true for $\rho\in\cM$, one does not expect the same to be true if $\rho-1$ is replaced by a general mean-zero signed measure $\mu$.
\end{remark}
\begin{remark}
	To see the optimality of the scalings in Theorem \ref{thm1} for $s\le 0$, one would need a better description of $W_s$ near $0$, similar to item 2 of Lemma \ref{lem_period} for the case $0<s<d$. This is left as future work.
\end{remark}

We now compare Theorem \ref{thm1} with previous results in the literature. Our theorem includes the case $d = 1$, $p=1$ with $W_{\log}$ as a special case, see previous work in \cite{Mignotte,Sound,aim}. In $d=1$, another alternative height being used before is energy see \cite{kleiner1964equilibrium,huesing}, and over $\bT$ we will show in Section \ref{sec_eng} that the energy is essentially a $L^2$-norm with certain Riesz potential. In higher dimensions, our result is fundamentally different from previous results \cite{sjogrenhighdim,gotz,kleiner1964equilibrium,Wagner} in that we use different generalization of discrepancy. In these works, a discrepancy in the form of supreme of $\int_S (\rho-1) \rd{x}$ over certain test sets $S$ was used, however, these results are restricted in the sense that the inequality has a dependency on extra parameters from the choice of $S$. We also mention \cite{Wagner} on bounding the discrepancy in Wasserstein-1 distance and the more recent work \cite{steinerwass} on bounding the discrepancy in other Wasserstein distances in $d=1$. 

We close this section by the following remark: the fact that we are able to prove this inequality in this generality shows that the phenomenon that what Erd\H{o}s-Tur\'an discovered about polynomials actually holds in a much more universal way.
%There are more works  \cite{curve,and97,andrievskii1999erdos}
%Previously, this inequality with alternative heights have been studied in \cite{kleiner1964equilibrium,huesing,Mignotte,Bilu,Sound,aim}. In \cite{Mignotte,Sound,aim}, the inequality \eqref{eqn:ET} is proved where $\cH[f]$ is replaced with $h[f]:=\frac{1}{n}\int_{\mathbb{T}}(\log\frac{|f(z)|}{\sqrt{|a_0a_n|}})_+ \rd{\theta} \le \mathcal{H}[f]$ with different constant. Therefore up to the constant, these results give an improvement on \eqref{eqn:ET}. Again by \cite{Schur}, without loss of generality, it suffices to focus on polynomials with roots on the unit circle. We can extend $h$ to all probability measures of the unit circle by defining $h[\rho]: =\frac{1}{2} \| W_{\log}* \rho \|_{L^1}$. Then \cite{Mignotte,Sound,aim} essentially proves there exists $C$ such that
%\begin{equation}\label{ET1}
%	\cD[\rho] \le  C \|W_{\log}*\rho\|_{L^1}^{1/2},\quad \text{ for } \rho\in \cM(\mathbb{T}).
%\end{equation}
%In \cite{Mignotte}, the constant $C'=C/\sqrt{2}$ is shown to be the Catalan constant $C'\approx 2.5619$. It was improved by \cite{Sound} to $C' = 8/\pi \approx 2.5464$ and by \cite{aim} to $C'=4/\sqrt{\pi} \approx 2.2567$. Another natural alternative height is energy, which is used in \cite{kleiner1964equilibrium,huesing}. For example, \cite[Section 5, Corollary 2.5]{Dbook} gives an upper bound of discrepancy in terms of $\cE[\rho]$ when the underlying space is a quasiconformal curve. 
\subsection{Application on Stability of Energy Minimizers}\label{sec_eng}
In this section, we will give an application of Theorem \ref{thm1} on study of energy minimization. 

In potential theory, the probability measure(s) $\rho$ that minimizes the potential energy 
\begin{equation}
	\cE_W[\rho]:= \frac{1}{2} \int_X (W*\rho)\cdot \rho \rd{x},
\end{equation}
on a certain space $X$ with interaction potential $W$ are called \emph{energy minimizers}. Energy minimizers for the pairwise interaction energy $\cE_W$ on $\mathbb{R}^d$ have been studied extensively, in terms of existence, uniqueness, and properties \cite{Ca13,Ca13_2,CCP15,SST15,Lop19,ShuCar,BCT,BKSUB,CDM,CFP17,KSUB,ST}. Following these results, a natural question is the \emph{stability} of energy minimizer. That is to say, in case there is a unique energy minimizer $\rho_\infty$, whether it is possible to estimate the distance between $\rho\in\cM$ and $\rho_\infty$ in terms of $\cE_W[\rho]-\cE_W[\rho_\infty]$.

A crucial observation we made is a connection between the energy $\cE_W$ and $L^2$-norm of the generated potential using Fourier transform.
%Let $W:\mathbb{T}^d\rightarrow (-\infty,\infty]$ be an interaction potential function satisfying the following assumptions:
%\begin{itemize}
%	\item ({\bf H1}) $W$  is $L^1$, even, lower-semicontinuous and bounded from below.
%	\item ({\bf H2}) $\hat{W}(0)=0$, and $\hat{W}(\bk)>0$ for any $\bk\in\mathbb{Z}^d\backslash \{0\}$.
%	\item ({\bf H3}) For some $C_1>0$, there holds $\frac{1}{|B(\bx;r)|}\int_{B(\bx;r)}(C_1+W(\by))\rd{\by} \le C(C_1+W(\bx))$ for any $r>0$.
%\end{itemize}
Over $\bT^d$, the potential energy with the interaction potential $W$ is
\begin{equation}\label{eqn:L2-energy}
	\cE_W[\rho]:= \frac{1}{2} \int_{\bT^d} (W*\rho)\cdot \rho \rd{\bx} = \frac{1}{2}\sum_{\bk\in\mathbb{Z}^d} \hat{W}(\bk)|\rho(\bk)|^2,
\end{equation}
where the last equality is justified in \cite[Appendix $2$]{SW21} when $W$ is nice, see (\textbf{H1}) to (\textbf{H3}) below. From the Fourier side $\cE_W[\rho]$ is actually equivalent to $\| \mathcal{W}* \rho \|^{2}_{L^2}$ for another potential function $\mathcal{W}$ when $\hat{\mathcal{W}}(\bk)^2 = \hat{W}(\bk)$, by applying Plancherel identity. This justifies our generalization to Riesz potentials, since even one starts with a logarithmic potential, its energy is exactly the $\| \mathcal{W}*\rho \|_{L^2}^2$ for some Riesz potential, moreover the class of Riesz potentials is closed under taking square-root on the Fourier coefficients. 

%From this, we see that both previous alternative heights $h[\rho]$ and $\cE_W[\rho]$ can be interpreted as $L^p$-norm of the generated potential with certain interaction potential $W$. 
%This justifies our choice in generalizing heights using $L^p$-norm of the generated potentials. 

On the other hand, if the interaction potential satisfies $\hat{W}(\bk)>0$ for $\bk\neq0$, then it is clear from the expression on the Fourier side \eqref{eqn:L2-energy} that the uniform distribution $\rho_\infty=1$ is the unique energy minimizer with the minimal energy $\cE_W[\rho_\infty]=0$.

We now apply Theorem \ref{thm1} to prove the stability of certain energy minimizers in the $d_\infty$ sense based on the observations above. Let $W:\mathbb{T}^d\rightarrow (-\infty,\infty]$ be an interaction potential function satisfying the following assumptions:
\begin{itemize}
	\item ({\bf H1}) $W$  is $L^1$, even, lower-semicontinuous and bounded from below.
	\item ({\bf H2}) $\hat{W}(0)=0$, and $\hat{W}(\bk)>0$ for any $\bk\in\mathbb{Z}^d\backslash \{0\}$.
	\item ({\bf H3}) For some $C_1>0$, there holds $\frac{1}{|B(\bx;r)|}\int_{B(\bx;r)}(C_1+W(\by))\rd{\by} \le C(C_1+W(\bx))$ for any $r>0$.
\end{itemize}
%For $\rho\in \cM$, we recall the pairwise interaction energy with potential $W$ is defined by \eqref{eqn:L2-energy}.
%\begin{equation}\label{E}
%	E_W[\rho] := \frac{1}{2}\int_{\mathbb{T}^d}(W*\rho)\rho\rd{\bx} = \frac{1}{2}\sum_{\bk\in\mathbb{Z}^d} \hat{W}(\bk)|\rho(\bk)|^2
%\end{equation}
%where the last equality is justified in \cite{SW21} in the case of $\mathbb{T}$, and the $\mathbb{T}^d$ case can be justified similarly. 
When $W$ is a periodized Riesz potential $W_s$,  Lemma \ref{lem_period} shows that $W_s$ satisfies ({\bf H1})-({\bf H3}) for any $s<d$, and we denote the corresponding energy as $\cE_s$.

%Minimizers for the pairwise interaction energy \eqref{E} on $\mathbb{R}^d$ have been studied extensively, in terms of existence, uniqueness, and properties \cite{Ca13,Ca13_2,CCP15,SST15,Lop19,ShuCar}. Following these results, a natural question is the stability of energy minimizer. This is to say, in case there is a unique energy minimizer $\rho_\infty$, whether it is possible to estimate the distance between $\rho\in\cM$ and $\rho_\infty$ in terms of $E_W[\rho]-E_W[\rho_\infty]$. In the context of $\mathbb{T}^d$, this question is addressed by few references, among which \cite{} give results in 1D for signed measures, and \cite{} in multiD. 

%For the energy defined in \eqref{E} with $W$ satisfying ({\bf H1})-({\bf H3}), it is clear from \eqref{E} that the uniform distribution $\rho_\infty=1$ is the unique energy minimizer with the minimal energy $E_W[\rho_\infty]=0$. 

Our goal is to get a stability estimate by controlling $d_\infty(\rho,1)$ in terms of $\cE_W[\rho]$. We apply \eqref{eqn:L2-energy} to Riesz potentials
\begin{equation}
	\cE_s[\rho] = \frac{1}{2}\sum_{\bk\in\mathbb{Z}^d} |\bk|^{-d+s}|\rho(\bk)|^2 = \frac{1}{2}\sum_{\bk\in\mathbb{Z}^d} \big||\bk|^{(-d+s)/2}\rho(\bk)\big|^2 = \frac{1}{2}\|W_{s'}*\rho\|_{L^2}^2,
\end{equation}
with $s'=\frac{d+s}{2}$. Also notice that if a potential $W$ satisfies $\hat{W}(0)=0$ and $\hat{W}(\bk)\ge c\hat{W}_s(\bk)$, then $\cE_W[\rho] \ge c \cE_s[\rho]$. Therefore, applying Theorem \ref{thm1} with $p=2$ and $s$ replaced by $s'$, we directly get the following result on the stability of the uniform distribution as an energy minimizer, with optimal scaling up to the possible logarithmic factor.
\begin{theorem}\label{thm3}
	Let $s<d$ and $\rho\in \cM$. Let $W$ be an interaction potential satisfying ({\bf H1})-({\bf H3}) with a quantitative lower bound
	\begin{equation}
		\hat{W}(\bk) \ge c|\bk|^{-d+s},\,\forall 0\ne \bk \in\mathbb{Z}^d,\quad \hat{W}(0)=0.
	\end{equation}
	Then the associated energy $\cE_W[\rho] = \frac{1}{2}\int_{\mathbb{T}^d}(W*\rho)\rho\rd{\bx}$ satisfies:
	\begin{enumerate}
		\item[\textnormal{(i)}]  If $d\ge 2$, then
		\begin{equation}\label{thm3_1}
			d_\infty(\rho,1) \lesssim  \cE_W[\rho]^\gamma,\quad \gamma = \frac{1}{2d-s}.
		\end{equation}
		\item[\textnormal{(ii)}] If $d=1$, then
		\begin{enumerate}
			\item If $s < 0$, then \eqref{thm3_1} also holds.
			\item If $s = 0$, then
			\begin{equation}\label{thm3_2}
				d_\infty(\rho,1)(1+|\log d_\infty(\rho,1)|)^{-1/2} \lesssim  \cE_W[\rho]^{1/2}.
			\end{equation}
			\item If $ 0<s<1$, then
			\begin{equation}
				d_\infty(\rho,1)  \lesssim  \cE_W[\rho]^{1/2}.
			\end{equation}
		\end{enumerate}
	    	\item[\textnormal{(iii)}]   \textnormal{(i)} and \textnormal{(ii)} are sharp up to the constants when $0<s<d$ with the exception $d = 1$ and $s = 1/p$.
	\end{enumerate}
\end{theorem}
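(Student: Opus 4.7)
The plan is to reduce Theorem \ref{thm3} directly to Theorem \ref{thm1} with $p=2$, via a Fourier-side rewriting of the energy. Set $s':=\frac{d+s}{2}$, so that $\hat W_{s'}(\bk)^2=|\bk|^{-2(d-s')}=|\bk|^{-d+s}=\hat W_s(\bk)$ for $\bk\ne 0$. Plancherel together with \eqref{eqn:L2-energy} then gives the identity
\begin{equation*}
\cE_s[\rho] \;=\; \tfrac12\sum_{\bk\in\mathbb{Z}^d}|\bk|^{-d+s}|\hat\rho(\bk)|^2 \;=\; \tfrac12\|W_{s'}*\rho\|_{L^2}^2,
\end{equation*}
already noted in the paragraph preceding the theorem. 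The pointwise (in $\bk$) lower bound $\hat W(\bk)\ge c|\bk|^{-d+s}$ with $\hat W(0)=0$ yields $\cE_W[\rho]\ge c\,\cE_s[\rho]$ on the Fourier side, and hence
\begin{equation*}
\|W_{s'}*\rho\|_{L^2}\;\lesssim\; \cE_W[\rho]^{1/2}.
\end{equation*}

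Next I apply Theorem \ref{thm1} with $p=2$ and Riesz parameter $s'$, and compose with the energy bound. For part (i), $d\ge 2$ ensures $p=2\le d$, so \eqref{thm1_1} gives $d_\infty(\rho,1)\lesssim\|W_{s'}*\rho\|_{L^2}^{\gamma'}$ with $\gamma'=(d+d/2-s')^{-1}=2/(2d-s)$; inserting the energy bound produces $d_\infty(\rho,1)\lesssim\cE_W[\rho]^{\gamma'/2}=\cE_W[\rho]^{1/(2d-s)}$, which is \eqref{thm3_1}. For part (ii) with $d=1$, the three subcases $s<0$, $s=0$, $0<s<1$ correspond respectively to $s'<1/2=1/p$, $s'=1/2=1/p$, and $s'>1/2=1/p$, so they match subcases (a), (b), (c) of Theorem \ref{thm1}(ii). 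Taking the square root of the energy bound and substituting then delivers \eqref{thm3_1} in case (a), the logarithmically corrected bound \eqref{thm3_2} in case (b), and the linear bound in case (c).

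For sharpness in part (iii), the plan is to transfer the extremizers of Theorem \ref{thm1}(iii) through the same Plancherel identity, specialized to $W=W_s$. For that choice the inequality $\|W_{s'}*\rho\|_{L^2}\lesssim\cE_W[\rho]^{1/2}$ becomes an equality up to the factor $\sqrt{2}$, so any sequence saturating Theorem \ref{thm1} at parameter $s'$ also saturates Theorem \ref{thm3} at parameter $s$. The change of variable $s\mapsto s'=(d+s)/2$ sends the admissible range $0<s<d$ to $d/2<s'<d$, and sends the exclusion $d=1, s=0$ to the excluded borderline $d=1, s'=1/p=1/2$ of Theorem \ref{thm1}(iii). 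The only real work in the whole argument is bookkeeping of exponents to check that the case splits align; all substantive analysis is already contained in Theorem \ref{thm1}, which is where I expect the conceptual difficulty to lie in any complete write-up.
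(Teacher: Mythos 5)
Your proposal follows exactly the paper's route: rewrite $\cE_s[\rho]=\tfrac12\|W_{s'}*\rho\|_{L^2}^2$ with $s'=(d+s)/2$ via Plancherel, observe that the pointwise Fourier lower bound on $\hat W$ gives $\cE_W\ge c\,\cE_s$, and then invoke Theorem \ref{thm1} with $p=2$ and $s$ replaced by $s'$. The exponent and case-split bookkeeping you carry out is precisely what the paper leaves implicit, and your sharpness-transfer argument for part (iii) (take $W=W_s$ so the energy bound is an identity, then use the extremizers from Theorem \ref{thm2}) is also the intended one; note only that the stated exception ``$d=1$, $s=1/p$'' in Theorem \ref{thm3}(iii) is a typo carried over from Theorem \ref{thm1} and should read ``$d=1$, $s=0$,'' as your change-of-variable $s\mapsto s'$ correctly identifies.
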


\begin{remark}
	The counterpart of $\cE_s$ on $\mathbb{R}$ in the case $s=0$ was studied by \cite{CFP}, in which the wellposedness of the Wasserstein-2 gradient flow associated to this energy is proved. In the presence of a quadratic attractive potential, \cite{CFP} also proves the exponential convergence to the energy minimizer. This result implies a stability result of the form $d_2(\rho,\rho_\infty) \le C(\cE[\rho]-\cE[\rho_\infty])^{1/2}$, where $\rho_\infty$ denotes the unique energy minimizer, and $d_2$ denotes the Wasserstein-2 distance. Since $d_2(\rho_1,\rho_2) \le d_\infty(\rho_1,\rho_2)$ in any underlying space, our result \eqref{thm3_2} takes a stronger form than the stability result implied by \cite{CFP}, up to the logarithmic factor.
\end{remark}

Finally we give a result on the stability of energy minimizers with respect to the perturbation on the potential $W$. 
\begin{theorem}\label{thm4}
	Let $W$ satisfy the assumption of Theorem \ref{thm3}, and $\tilde{W}$ is a perturbation of $W$, satisfying $\|W-\tilde{W}\|_{L^\infty} < \infty$. Then any minimizer $\rho$ of the interaction energy $\cE_{\tilde{W}}$ in $\cM$ satisfies the same conclusions as in Theorem \ref{thm3} with $\cE_W[\rho]$ replaced by $\|W-\tilde{W}\|_{L^\infty}$.
\end{theorem}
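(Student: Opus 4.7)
\textbf{Proof plan for Theorem \ref{thm4}.} The plan is to reduce the statement directly to Theorem \ref{thm3} by proving the single energy comparison
\begin{equation*}
	\cE_W[\rho] \le \|W-\tilde{W}\|_{L^\infty}
\end{equation*}
for any minimizer $\rho\in\cM$ of $\cE_{\tilde{W}}$. Once this is in hand, plugging it into the estimates of Theorem \ref{thm3} produces exactly the stated conclusions.

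First I would record the pointwise bound coming from the $L^\infty$ perturbation. For any probability measure $\mu\in\cM$, Fubini and $\|\mu\|_{L^1}=1$ give
\begin{equation*}
	\bigl|\cE_W[\mu]-\cE_{\tilde{W}}[\mu]\bigr|
	= \frac{1}{2}\biggl|\iint_{\mathbb{T}^d\times\mathbb{T}^d}(W-\tilde{W})(\bx-\by)\,\mu(\rd\by)\,\mu(\rd\bx)\biggr|
	\le \frac{1}{2}\|W-\tilde{W}\|_{L^\infty}.
\end{equation*}
Next, I would use the hypothesis $\hat{W}(0)=0$ from (\textbf{H2}) together with the Fourier representation \eqref{eqn:L2-energy} to observe that $\cE_W[1]=0$.

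Then comes the three-line chain. Since $\rho$ minimizes $\cE_{\tilde{W}}$, $\cE_{\tilde{W}}[\rho]\le \cE_{\tilde{W}}[1]$. Applying the perturbation bound once at $\rho$ and once at $1$ yields
\begin{equation*}
	\cE_W[\rho] \le \cE_{\tilde{W}}[\rho]+\tfrac{1}{2}\|W-\tilde{W}\|_{L^\infty} \le \cE_{\tilde{W}}[1]+\tfrac{1}{2}\|W-\tilde{W}\|_{L^\infty} \le \cE_W[1]+\|W-\tilde{W}\|_{L^\infty} = \|W-\tilde{W}\|_{L^\infty}.
\end{equation*}
With this inequality established, I simply quote Theorem \ref{thm3} (whose hypotheses hold for $W$ by assumption) with $\cE_W[\rho]$ replaced by $\|W-\tilde{W}\|_{L^\infty}$ on the right-hand side of each estimate.

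There is no real obstacle in this argument; the only thing to be careful about is that the perturbation bound and the minimality must be deployed so that the uniform distribution $1$ appears as the comparison competitor in exactly the step where $\hat{W}(0)=0$ lets us discard $\cE_W[1]$. Note also that we do not need $\tilde{W}$ itself to satisfy (\textbf{H1})--(\textbf{H3}); the existence of a minimizer $\rho\in\cM$ is assumed and that is all the proof uses about $\tilde{W}$ beyond the $L^\infty$-closeness to $W$.
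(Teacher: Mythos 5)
Your proposal is correct and follows essentially the same route as the paper: compare the minimizer $\rho$ against the competitor $1$, use $\cE_W[1]=0$ from $\hat W(0)=0$, and invoke the $L^\infty$ perturbation bound twice (once to pass from $\cE_{\tilde W}[\rho]$ to $\cE_W[\rho]$ and once to pass from $\cE_{\tilde W}[1]$ to $\cE_W[1]$), picking up a factor $\tfrac12\|W-\tilde W\|_{L^\infty}$ each time. The only cosmetic difference is that you first isolate the general bound $|\cE_W[\mu]-\cE_{\tilde W}[\mu]|\le\tfrac12\|W-\tilde W\|_{L^\infty}$ and then string everything into a single chain, whereas the paper inlines the two applications; the content is identical.
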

We remark that the existence of minimizers for $\cE_{\tilde{W}}$ can be guaranteed as long as $\tilde{W}$ satisfies ({\bf H1}), see Proposition 3.8 of \cite{SW21} for a treatment on $\mathbb{T}$ (which can be easily generalized to $\mathbb{T}^d$).
\begin{proof}
	As a minimizer of $\cE_{\tilde{W}}$, $\rho$ satisfies
	\begin{equation}
		\cE_{\tilde{W}}[\rho] \le \cE_{\tilde{W}}[1] = \cE_W[1] + \frac{1}{2}\int\int (\tilde{W}(\bx-\by)-W(\bx-\by))\rd{\by}\rd{\bx} \le \frac{1}{2}\|W-\tilde{W}\|_{L^\infty}. 
	\end{equation}
	Therefore
	\begin{equation}\begin{split}
			\cE_W[\rho]  = & \cE_{\tilde{W}}[\rho] + \frac{1}{2}\int\int (W(\bx-\by)-\tilde{W}(\bx-\by))\rho(\by)\rd{\by}\rho(\bx)\rd{\bx} \\ \le & \frac{1}{2}\|W-\tilde{W}\|_{L^\infty} + \frac{1}{2}\|W-\tilde{W}\|_{L^\infty} = \|W-\tilde{W}\|_{L^\infty}.
	\end{split} \end{equation}
	Then we get the conclusion by applying Theorem \ref{thm3} to $\rho$.
\end{proof}
Although the proof is simple, Theorem \ref{thm4} actually gives an interesting perspective on energy minimizers. Indeed, even a small $L^\infty$ perturbation on $W$ may destroy the positivity condition ({\bf H2}) and result in complicated energy minimizer(s). However the energy minimizer(s) under the perturbed potential will stay close to the original one in the sense of Wasserstein-infinity distance. In fact, we will consider the following example, which uses a similar idea as Sections 7 and 8  of \cite{ShuCar}. We take $W=W_s$ with $s<0$ and 
\begin{equation}
	\tilde{W}_\epsilon(\bx) = W(\bx) - c_0\epsilon^{-s} \psi\big(\frac{\bx}{\epsilon}\big),\quad \epsilon>0,\quad \bx\in \mathbb{T}^d=[-1/2,1/2)^d.
\end{equation}
where $\psi$ is a fixed compactly supported mollifier (i.e., $\psi$ is nonnegative, radial, and $\int_{\mathbb{R}^d}\psi\rd{\bx} = 1$), and $c_0>0$ is a constant to be chosen. Then one has $\|W-\tilde{W}_\epsilon\|_{L^\infty}\rightarrow 0$ as $\epsilon\rightarrow 0+$. On the other hand,
\begin{equation}\label{Wc0}
	\cF[\tilde{W}_\epsilon](\bk) = |\bk|^{-d+s} - c_0\epsilon^{-s+d} \hat{\psi}(\epsilon \bk),\quad \forall 0\ne \bk \in \mathbb{Z}^d.
\end{equation}
where $\hat{\psi}$ is the Fourier transform of $\psi$ on $\mathbb{R}^d$. Clearly $\hat{\psi}(\xi)$ is real and bounded from below by $1/2$ in some ball $\xi\in B(0;R)$, $R>0$. Therefore, for any $\bk$ with $R/(2\epsilon)\le |\bk| \le R/\epsilon$, we have
\begin{equation}
	\cF[\tilde{W}_\epsilon](\bk) \le \big(\frac{R}{2\epsilon}\big)^{-d+s} - c_0\epsilon^{-s+d} \cdot \frac{1}{2} = -\Big(\frac{1}{2}c_0-\big(\frac{R}{2}\big)^{-d+s}\Big)\epsilon^{-s+d} < -c\epsilon^{-s+d},
\end{equation}
if $c_0>2(\frac{R}{2})^{-d+s}$. In this case, $\cF[\tilde{W}_\epsilon]$ always attains negative values for any $\epsilon>0$, which implies that the uniform distribution $1$ is \emph{not} a minimizer for $\cE_{\tilde{W}_\epsilon}$. Indeed, Lemma 7.3 and Remark 7.4 of \cite{ShuCar} suggest that $\cE_{\tilde{W}_\epsilon}$ is likely to have minimizers consisting of clusters of radius at most $O(\epsilon)$. See Figure \ref{fig1} for numerical evidence for this phenomenon. 
\begin{figure}
	\begin{center}
		\includegraphics[width=.48\linewidth]{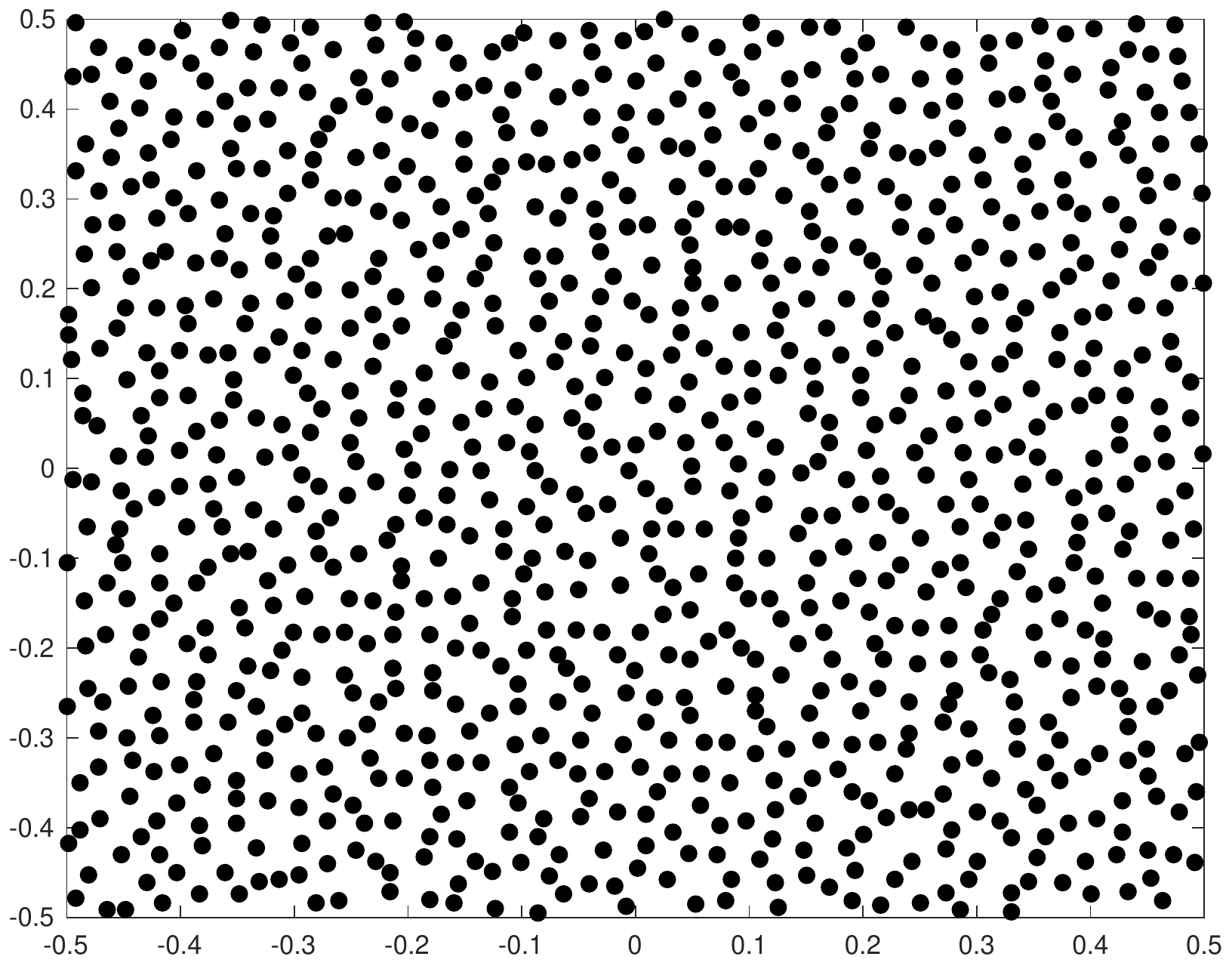}
		\includegraphics[width=.48\linewidth]{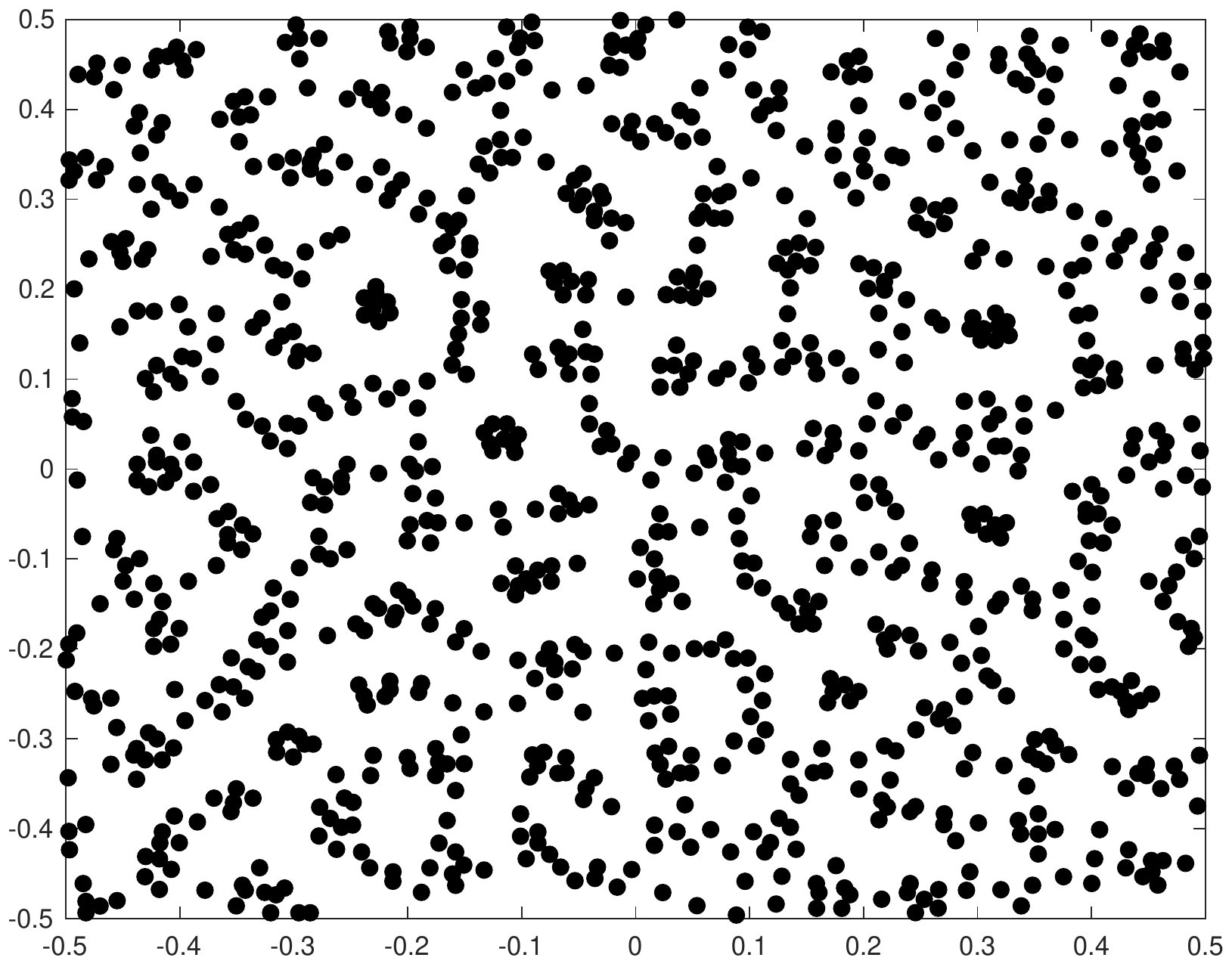}
		\includegraphics[width=.48\linewidth]{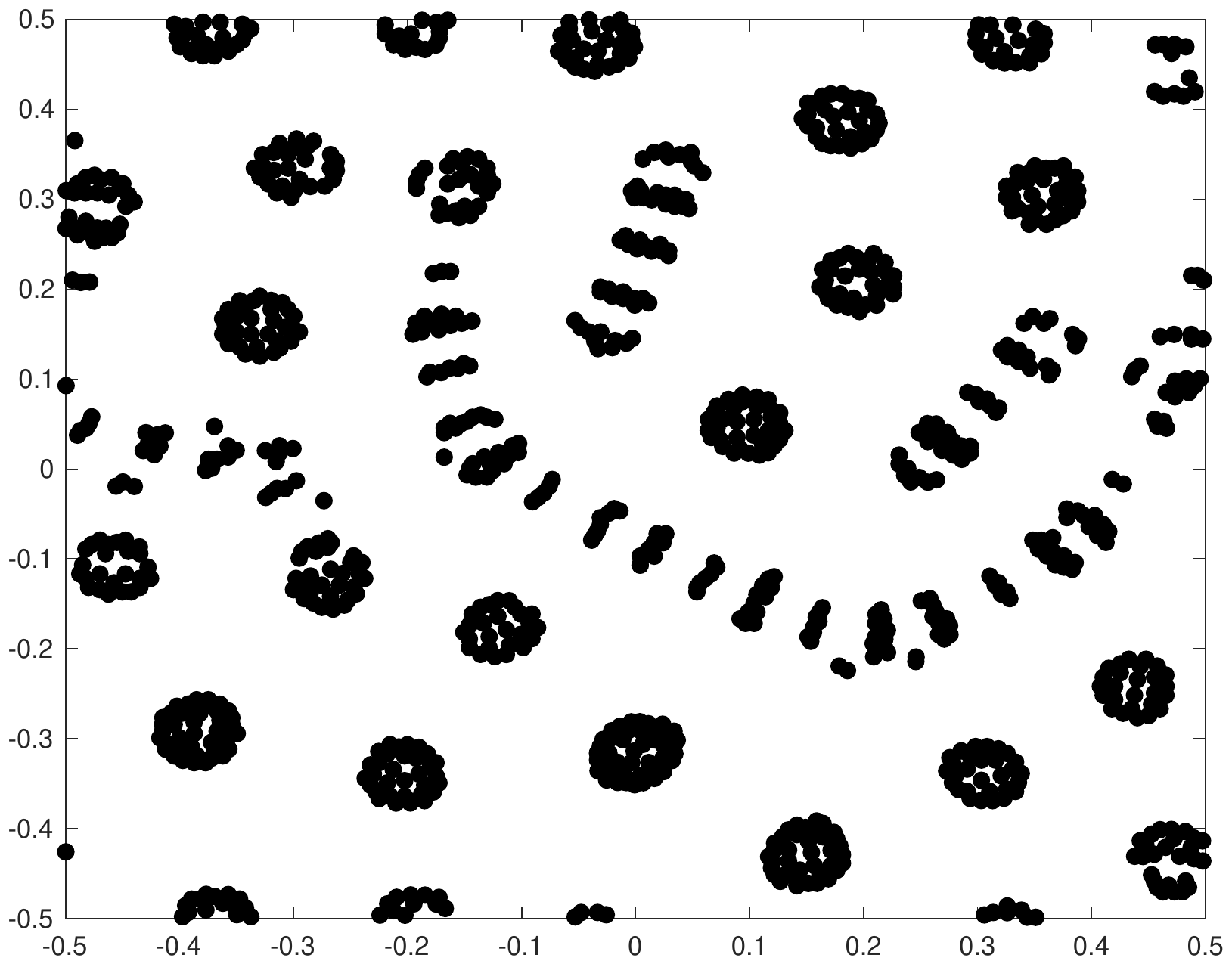}
		\includegraphics[width=.48\linewidth]{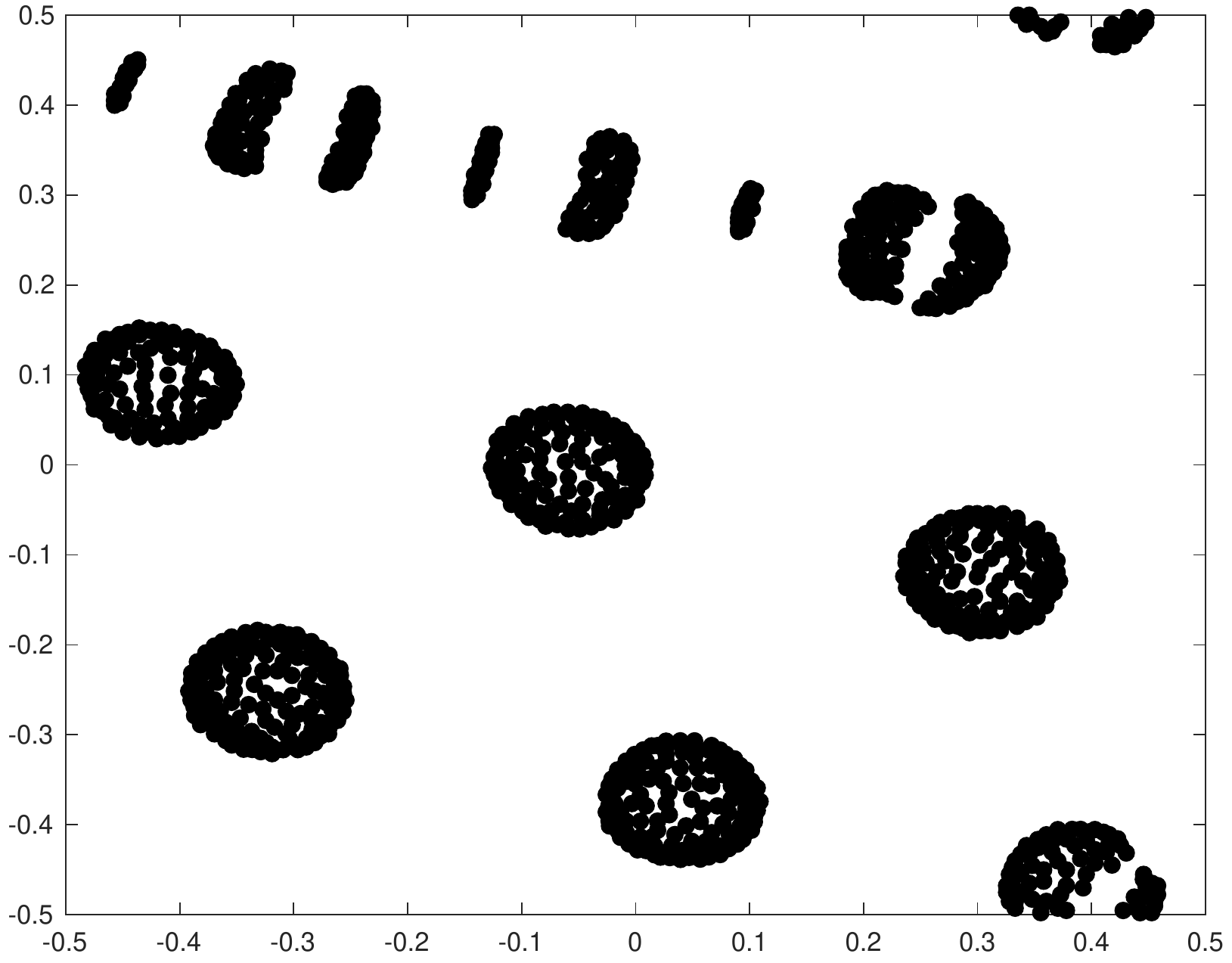}
		\caption{Long time simulation of the particle gradient flow $\dot{\bx}_i = -\frac{1}{N}\sum_{j\ne i} \nabla W(\bx_i-\bx_j),\,N=1000$. It (at least locally) minimizes the discrete version of $\cE_W$, namely, $\frac{1}{2N^2}\sum_{i\ne j} W(\bx_i-\bx_j)$. We take $d=2,\,s=-1$. Top left: $W=W_s$. For the other three pictures, $W$ is given by \eqref{Wc0} with $c_0=50$ and $\psi(\bx) = e^{-1/(1-|\bx|^2)}\chi_{|\bx|\le 1}$.  Top right: $\epsilon=0.05$; Bottom left: $\epsilon=0.1$; Bottom right: $\epsilon=0.2$. }
		\label{fig1}
	\end{center}
\end{figure}

Now we can see that Theorem \ref{thm4} gives a control on how wild the minimizers of $\cE_{\tilde{W}_\epsilon}$ could be. Although complicated structure can form at a fine level, any minimizer of $\cE_{\tilde{W}_\epsilon}$ have to remain close to the uniform distribution in the sense of the $d_\infty$ distance.

\subsection{Method}
In this section, we give a sketch of the proof for Theorem \ref{thm1}. 
\subsubsection{Wasserstein distance}
Our starting point is to give a convenient description for $d_{\infty}(\rho_1, \rho_2)$ in a general space so that we can use $d_{\infty}(\rho, 1)$ in $d>1$ in place for the discrepancy $\cD[\rho]$ in $d=1$ \eqref{eqn:D-H-def} . This description is inspired by the property of discrepancy in $d=1$.

%To generalize this idea to higher dimensions, we first give an equivalent formulation of the $d_\infty$ distance, in terms of the `discrepancy' between measures. 

For a given measurable set $S\subseteq \mathbb{T}^d$ and $r>0$, we denote the expansion of $S$ by $r$ as
\begin{equation}\label{Sr}
	S_r = \{\bx\in\mathbb{T}^d: \dist(\bx,S)<r\} = \bigcup_{\bx\in S} B(\bx;r).
\end{equation}
%See Figure \ref{fig2} as an illustration.
\begin{figure}
	\begin{center}
		\includegraphics[width=.6\linewidth]{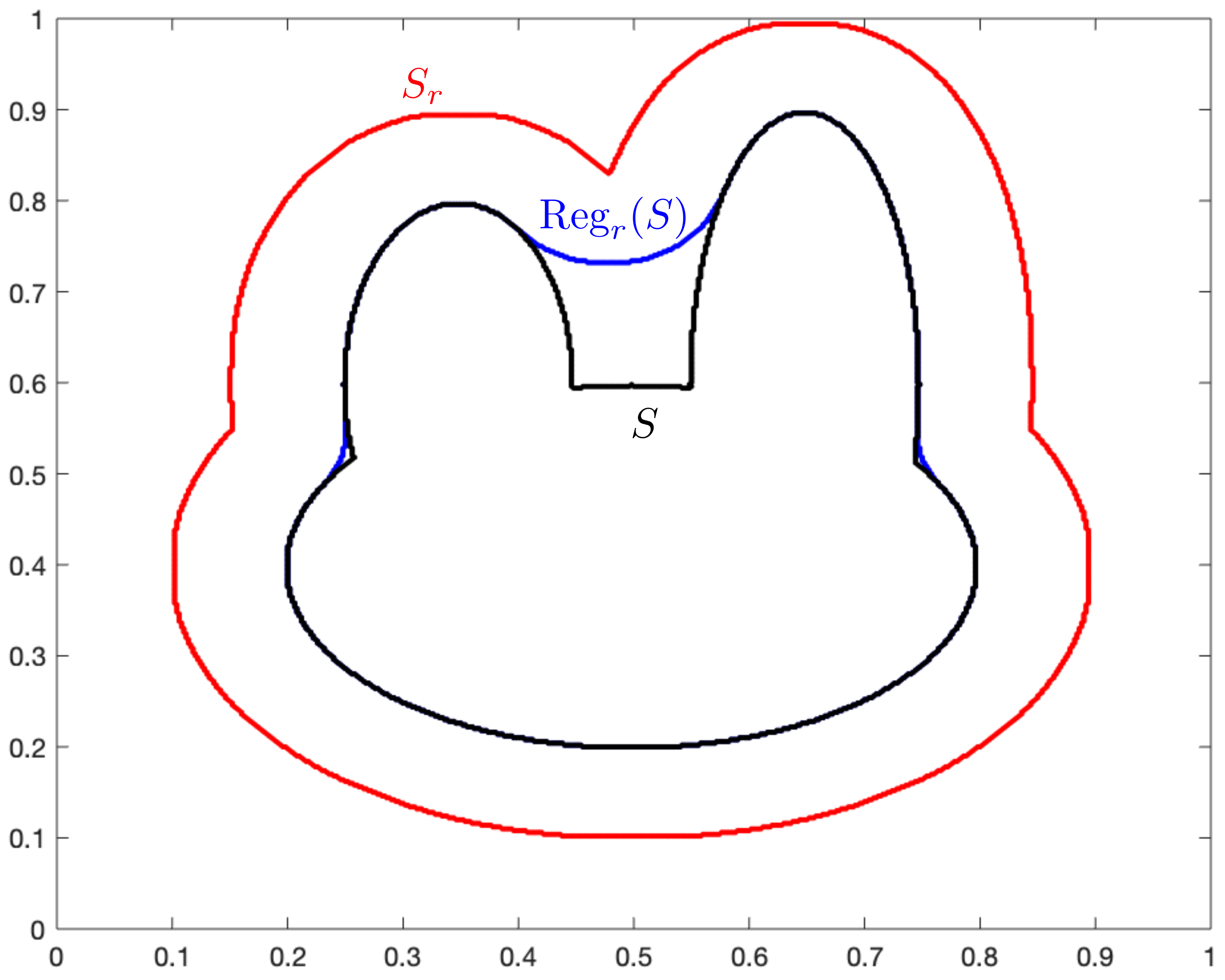}
		\caption{Illustration of $S_r$, the expansion of a set $S$, and the regularization $\Reg_r(S)$. $S$ is the region bounded by the innermost black curve. $\Reg_r(S)$, which contains $S$, is bounded by the blue curve. $S_r$ is the region bounded by the outermost red curve. }
		\label{fig2}
	\end{center}
\end{figure}
Then we  prove in Section \ref{sec_dinfty} the following theorem.
\begin{theorem}\label{lem_dinfty}
	Let $\rho_1,\rho_2\in \cM$. Then
%	\footnote{Any subset of $\mathbb{T}^d$ or $\mathbb{R}^d$ appeared in this paper will be assumed to be measurable, for example, $S$ in \eqref{lem_dinfty_1}.}
	\begin{equation}\label{lem_dinfty_1}
		d_\infty(\rho_1,\rho_2) = \sup\Big\{r:\exists S\subseteq \mathbb{T}^d \textnormal{ s.t. }\int_S \rho_1\rd{\bx} > \int_{S_r}\rho_2\rd{\bx}\Big\}.
	\end{equation}
\end{theorem}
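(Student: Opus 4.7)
The plan is to prove the two inequalities $d_\infty(\rho_1,\rho_2) \geq R$ and $d_\infty(\rho_1,\rho_2) \leq R$ separately, where $R$ denotes the supremum on the right-hand side of \eqref{lem_dinfty_1}. The easy direction is $d_\infty \geq R$: I would fix any $r$ admissible in the supremum, i.e.\ any $S$ with $\int_S \rho_1 > \int_{S_r} \rho_2$, and show that every transport plan $\mu \in \Pi(\rho_1,\rho_2)$ incurs cost $\geq r$. The pigeonhole-type observation is that the $\mu$-mass leaving $S$ equals $\int_S \rho_1$ while only $\int_{S_r} \rho_2$ is available inside $S_r$, so $\mu(S\times S_r^c)>0$; hence $\supp\mu$ meets $S\times S_r^c$, and any pair $(\bx,\by)$ there satisfies $\dist(\bx,\by)\geq \dist(\by,S)\geq r$. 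Taking $\sup$ over admissible $r$ yields $d_\infty \geq R$.

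For the reverse direction $d_\infty \leq R$, my approach is to invoke Strassen's theorem on couplings: on a Polish space there exists $\mu \in \Pi(\rho_1,\rho_2)$ with $\supp\mu \subseteq \{(\bx,\by): \dist(\bx,\by) \leq r\}$ if and only if $\rho_1(A) \leq \rho_2(A_{[r]})$ for every closed $A$, where $A_{[r]}=\{\bx: \dist(\bx,A)\leq r\}$. For any $r > R$, the definition of $R$ gives $\int_A \rho_1 \leq \int_{A_r} \rho_2 \leq \int_{A_{[r]}} \rho_2$ for every measurable $A$, using $A_r \subseteq A_{[r]}$. Strassen then produces a coupling of cost $\leq r$, so $d_\infty(\rho_1,\rho_2) \leq r$; letting $r \downarrow R$ yields $d_\infty \leq R$.

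The main obstacle is the existence direction, since it is a genuine coupling-construction statement rather than a verification. Invoking Strassen's theorem (valid on Polish spaces, and $\mathbb{T}^d$ qualifies) makes this painless; alternatively, a self-contained argument approximates $\rho_1,\rho_2$ by empirical measures $\frac{1}{N}\sum \delta_{\bx_i}$ and $\frac{1}{N}\sum \delta_{\by_j}$, verifies Hall's marriage condition on the bipartite graph with edges $\{(i,j): \dist(\bx_i,\by_j) \leq r\}$ (which is precisely the discretization of the expansion hypothesis), and extracts a weak-$*$ limit of the resulting couplings using compactness of $\mathbb{T}^d \times \mathbb{T}^d$. A small technical point to handle carefully is the distinction between the open expansion $S_r = \{\bx:\dist(\bx,S)<r\}$ in the hypothesis and the closed expansion $A_{[r]}$ natural to Strassen's formulation; this is bridged cost-free by the trivial inclusion $A_r \subseteq A_{[r]}$.
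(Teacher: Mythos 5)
Your easy direction ($d_\infty \geq R$) matches the paper's argument. For the hard direction, invoking Strassen's coupling theorem is a genuinely different and valid route. The paper instead develops a weighted version of Hall's marriage theorem (Lemma~\ref{lem_hall}) from scratch, proves the result first for finitely supported measures (Lemma~\ref{lem_dinfty1}), and passes to general measures by a grid-discretization argument. Strassen's theorem is precisely the measure-theoretic generalization of Hall's theorem, so you are black-boxing the combinatorial core; this buys brevity and conceptual cleanness at the cost of self-containedness, which is what the paper's route provides. One caution about the alternative ``self-contained'' version you sketch in passing: approximating $\rho_1,\rho_2$ by \emph{uniformly weighted} empirical measures $\frac{1}{N}\sum_i\delta(\bx-\bx_i)$ and applying classical (unweighted) Hall does not work in general, because such measures cannot approximate an arbitrary probability measure in $d_\infty$; the paper itself gives $\rho=\frac{1}{\sqrt2}\delta(x) + (1-\frac{1}{\sqrt2})\delta(x-\tfrac12)$ as a counterexample, and this is exactly why it proves a Hall-type lemma allowing irrational weights and uses grid-point approximants whose weights are the (possibly irrational) local masses of $\rho$. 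Your primary route via Strassen sidesteps that pitfall entirely and is correct.
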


Recall that for the discrepancy $\cD[\rho]$ in $d =1$, if $I$ is an interval witnessing $\cD[\rho]$, then by \cite{Wass} one has $2d_\infty(\rho,1) = \cD[\rho] = \int_I(\rho-1)\rd{x} = \int_I \rho\rd{x} - \int_{I_{d_\infty(\rho,1)}}1\rd{x} + 2d_\infty(\rho,1)$, therefore $\int_I \rho\rd{x} = \int_{I_{d_\infty(\rho,1)}}1\rd{x}$. This shows that $I$, serving as $S$ in \eqref{lem_dinfty_1}, is almost the $S$ achieving the supremum for $d_\infty(\rho,1)$.

In order to to prove this theorem, we first prove the case of discrete measures using graph theory, namely, Hall's Theorem on perfect matchings of bipartite graphs. Then we generalize it to any probability measure by an approximation argument. 
%In fact, the close relation between the $d_\infty$ distance and perfect matchings have been utilized for computational purposes [refs]. However, to our best knowledge, this is the first time a rigorous relation between them is formulated.

\subsubsection{Fourier Analysis}
With Theorem \ref{lem_dinfty}, now we can apply the method in \cite{Sound} using Fourier analysis. When $d=1$, 
%
%Our starting point is to adopt and generalize the method of test functions in \cite{Sound} for the 1D Erd\H{o}s-Tur\'an inequality \eqref{ET1}. 
this method utilizes the interval $I$ witnessing $\cD[\rho]$, and consider a test function $g = \psi_r * \chi_{I_{r}}$ which is a mollified version of $\chi_{I_r}$. The test function $g$ captures the discrepancy between $\rho$ and the uniform distribution. Then the inequality in \cite{Sound} can be obtained by estimating $\int g\cdot (\rho-1)\rd{x}$ from below by $\cD[\rho]$, and from above by $\|W_{\log}*\rho\|_{L^1}$.

In Section \ref{sec_1pd} we generalize the method of Fourier analysis in \cite{Sound} to higher dimensions. To illustrate the idea, we denote $3r=d_\infty(\rho,1)$, and a mollifier $\psi_{r}$ supported in $B(0;r)$. In the spirit of the original method,  one takes a test function $g=\chi_{S_{r}}*\psi_{r}$, with $S$ maximizing in \eqref{lem_dinfty_1}. $g=1$ on $S$ while $\supp g\subseteq S_{2r}$. Such a test function enables us to detect the distance $d_\infty(\rho,1)$ by giving a positive lower bound $|S_{3r}\backslash S_{2r}|$ for $\int_{\mathbb{T}^d}g\cdot(\rho-1)\rd{\bx}$. Then, using standard Fourier analysis, we give an upper bound of the same quantity in terms of $\|W*\rho\|_{L^p}$ and $|S_{r}|$. Here, $|S_r|$ can be bounded from above by $|S_{2r}\backslash S_r|$ via an isoperimetric inequality, see Lemma \ref{lem_iso} where we give an isoperimetric inequality on $\bT^d$. We are now able to get an inequality involving $|S_{3r}\backslash S_{2r}|$, $|S_{2r}\backslash S_{r}|$ and $\| W*\rho \|_{L^p}$.

\subsubsection{Regularization}
Unlike in $d=1$ where $S$ can be taken to be closed intervals, the geometry for measurable subsets of $\R^d$ or $\bT^d$ will become much more complicated. 

We notice that there is a mismatch between the layers $|S_{3r}\backslash S_{2r}|$ and $|S_{2r}\backslash S_r|$, which could bring huge error if $S$ is complicated. Generally specking, $|S_{2r}\backslash S_r|$ could be much larger than $|S_{3r}\backslash S_{2r}|$. We overcome this difficulty by refining the choice of the test function. In fact, we apply a \emph{regularization} procedure to the set $S_r$ which will be used to define the test function $g$ (c.f. \eqref{g}) and play a subtle role in the proof of Theorem \ref{thm1} in Section \ref{sec_1pd}. We denote $S_r^c$ to be the complement of $S_r$, where $S_r$ is defined in \eqref{Sr}. 
%Then we introduce the concept of $r$-regularization of a set $S$, which will be used to define the test function $g$ (c.f. \eqref{g}). 
\begin{definition}\label{def_reg}
	For $S\subseteq \mathbb{T}^d$ and $r>0$, define the \emph{$r$-regularization of $S$} as $\Reg_r(S) = (S_r^c)_r^c$.
\end{definition}
See Figure \ref{fig2} as an illustration. See Appendix \ref{sec_reg} for more properties of regularization which are not used in Section \ref{sec_1pd}. 

This operation removes all possible fine structures of $S_r$ at the scale $r$, and enables us to compare the expanded layers of the regularized $S_r$.

For $d=1$ case of Theorem \ref{thm1}, in Section \ref{sec_d1} we utilize the fact that $S$ can be taken as an interval, and conduct a slightly different way of bounding $\int_{\mathbb{T}}g\cdot(\rho-1)\rd{\bx}$ from above to treat a wider range of $p$.

Finally, for Theorem \ref{thm2}, in Section \ref{sec_opt} we construct an explicit class of measures $\rho$ in \eqref{rhocons}, whose $d_\infty(\rho,1)$ is clear, and the corresponding $\|W_s*\rho\|_{L^p}$ can be estimated by using the explicit structure given in Lemma \ref{lem_period}.
\subsection{Notations}
Throughout the paper, we denote $d$ to be the dimension, and $\bT^d = (\R/\mathbb{Z})^d$ to be the dimension $d$ torus. We denote $\cM(\bT^d)$ to be the set of probability measures on $\bT^d$. When there is no confusion, we will suppress $\bT^d$ and just write $\cM$. We will denote $1_{\bT^d}$ to be the uniform distribution. 

For $d = 1$, we denote $W_{\log}(x): = -\log|2\sin \pi x|$ to be the logarithmic potential. For $s<d$, we denote the Riesz potential by $W_s(x)$, which is defined by $\hat{W_s}(\bk) = |\bk|^{-d+s}$ for all $\bk\neq 0$ and $\hat{W_s}(0) = 0$.
For $\rho \in \cM(\bT^d)$, we define $V_W[\rho]:= W*\rho$ to be the potential generated by $\rho$ with the interaction potential $W$, and $\cE_W[\rho]:=\frac{1}{2}\int_{\bT^d} (W*\rho) \cdot \rho \rd{x}$ to be the potential energy of $\rho$. Given $\rho_1, \rho_2 \in \cM(\bT^d)$, we denote the Wasserstein-infinity distance between them by $d_{\infty}(\rho_1, \rho_2)$. For $\rho\in \cM(\bT)$, we denote $\cD[\rho]:= \sup_I \int_I (\rho-1)\rd{x}$ to be the discrepancy of $\rho$, where the supreme is taken over all closed intervals in $\bT$, and $\cH[\rho]:=\| (-W_{\log}*\rho)_+ \|_{L^{\infty}}$ to be the height of $\rho$.  

We denote the homogeneous Sobolev norm by $\|\cdot \|_{\dot{W}^{k, p}}$ and $\|\cdot \|_{\dot{H}^k}$ when $p=2$. For us, the Fourier transform (respectively Fourier coefficients) of $u$ is defined by
\begin{equation}
 \cF[u](\xi) =\hat{u}(\xi) = \int_{\mathbb{R}^d} u(\bx)e^{-2\pi i \xi\cdot\bx}\rd{\bx},\,\xi\in\mathbb{R}^d, \quad \cF[u](\bk) =\hat{u}(\bk) = \int_{\mathbb{T}^d} u(\bx)e^{-2\pi i \bk\cdot\bx}\rd{\bx},\,\bk\in\mathbb{Z}^d
\end{equation}
respectively when $u$ is a function over $\R^d$ (respectively $\bT^d$).

Any subset $S$ of $\mathbb{T}^d$ or $\mathbb{R}^d$ appearing in this paper will be assumed to be measurable. We denote $S_r$ to be the expansion of a set $S$ by radius $r$, defined in \eqref{Sr}. Throughout the paper, $S_r^c$ always denotes the complement of $S_r$. $\Reg_r(S):= (S_r^c)_r^c$ is the $r$-regularization of $S$. We will also say a set $S\subseteq \mathbb{T}^d$ is \emph{$r$-regular} if $S=\Reg_r(S)$.
\section{Equivalent formulation of Wasserstein-infinity distance}\label{sec_dinfty}
In this section we prove Theorem \ref{lem_dinfty}. In \eqref{lem_dinfty_1}, one clearly has $\text{LHS}\ge\text{RHS}$. In fact, let $r>0$ satisfies the condition on the RHS, i.e., there exists $S\subseteq \mathbb{T}^d$ such that
\begin{equation}\label{S_Sr}
	\int_S\rho_1(\bx)\rd{\bx} > \int_{S_r}\rho_2(\by)\rd{\by}.
\end{equation}
Then for any transport plan $\mu(\bx,\by)$ from $\rho_1$ to $\rho_2$, we have
\begin{equation}
	I_1 := \int_S\rho_1(\bx)\rd{\bx} = \int_S \int_{\mathbb{T}^d}\mu(\bx,\by)\rd{\by}\rd{\bx} ,\quad I_2 := \int_{S_r}\rho_2(\by)\rd{\by}
	= \int_{\mathbb{T}^d}\int_{S_r}\mu(\bx,\by)\rd{\by}\rd{\bx} .
\end{equation}
This implies that $\supp(\mu(\bx,\by)\chi_S(\bx))\not\subseteq S\times S_r$, because otherwise 
$I_1 = \int_S \int_{S_r}\mu(\bx,\by)\rd{\by}\rd{\bx} \le I_2$ which contradicts \eqref{S_Sr}. Therefore we get $d_\infty(\rho_1,\rho_2)\ge r$ which proves the claim.

To deal with the other direction $\text{LHS}\le\text{RHS}$ for \eqref{lem_dinfty_1}, we need the following lemma which is a weighted version of Hall's Theorem.
\begin{lemma}\label{lem_hall}
	Let $a_1,\dots,a_n$, $b_1,\dots,b_m$ be positive real numbers with $\sum a_i = \sum b_i = 1$. Let $(V,\cE)$, $V=X\cup Y =\{x_1,\dots,x_n\}\cup \{y_1,\dots,y_m\}$ be a bipartite graph. Then the following are equivalent:
	\begin{itemize}
		\item There exists an $n\times m$ nonnegative matrix $\{c_{ij}\}$, such that
		\begin{equation}
			c_{ij}=0,\,\forall(x_i,y_j)\notin \cE,\quad \sum_i c_{ij} = b_j,\,j=1,\dots,m,\quad \sum_j c_{ij} = a_i,\,i=1,\dots,n.
		\end{equation}
		\item For all subsets $S\subseteq X$, $\sum_{x_i\in S} a_i \le \sum_{y_j\in \cN(S)} b_j$ where $\cN(S)$ denotes the neighborhood of $S$ in the graph $(V,\cE)$.
	\end{itemize}
\end{lemma}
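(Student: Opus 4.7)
The plan is to handle the two implications separately. The easy direction, from the existence of the matrix to the combinatorial inequality, is just bookkeeping: for any $S\subseteq X$ I would write
$$\sum_{x_i\in S}a_i = \sum_{x_i\in S}\sum_j c_{ij} = \sum_j\sum_{x_i\in S}c_{ij},$$
and note that $c_{ij}=0$ whenever $(x_i,y_j)\notin\cE$, so the inner sum is nonzero only for $y_j\in\cN(S)$. Bounding the inner sum by $\sum_i c_{ij}=b_j$ then gives $\sum_{x_i\in S}a_i\le\sum_{y_j\in\cN(S)}b_j$.

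For the nontrivial direction I would set up a max-flow/min-cut argument. Build a network on the vertex set $\{s,x_1,\dots,x_n,y_1,\dots,y_m,t\}$ with arcs $s\to x_i$ of capacity $a_i$, arcs $x_i\to y_j$ of capacity $+\infty$ whenever $(x_i,y_j)\in\cE$, and arcs $y_j\to t$ of capacity $b_j$. Any $s$--$t$ flow has value at most $\sum_i a_i = 1$, and any flow of value exactly $1$ must saturate every $s$- and $t$-arc; its restriction to the middle arcs is then a matrix $\{c_{ij}\}$ with support in $\cE$ and with row sums $a_i$, column sums $b_j$, which is exactly what we want. So the task reduces to showing that the maximum flow equals $1$, or equivalently by max-flow/min-cut, that every $s$--$t$ cut has capacity at least $1$.

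Given a finite-capacity cut, let $A\subseteq X$ be the $x_i$'s on the $t$-side and $B\subseteq Y$ be the $y_j$'s on the $s$-side. Finiteness forces $\cN(X\setminus A)\subseteq B$, since otherwise an infinite-capacity middle arc would lie across the cut. Applying the hypothesis to the set $X\setminus A$ gives
$$\sum_{x_i\notin A}a_i\le \sum_{y_j\in\cN(X\setminus A)}b_j\le \sum_{y_j\in B}b_j,$$
so the cut capacity $\sum_{x_i\in A}a_i+\sum_{y_j\in B}b_j$ is at least $\sum_i a_i=1$, as required.

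The main subtlety is not the combinatorics but invoking max-flow/min-cut with real (rather than rational) capacities. I would handle this either by appealing directly to LP duality on the feasibility LP for $\{c_{ij}\}$, which needs no rationality assumption, or by approximating each $a_i,b_j$ by rationals, clearing a common denominator to reduce to the classical integer Hall theorem on a bipartite multigraph (it is easy to check that Hall's condition is inherited by the rational approximations for small enough perturbations), and passing to the limit using compactness of the set of nonnegative matrices with prescribed row and column sums. Either route is routine, but it is the only place where some care is needed.
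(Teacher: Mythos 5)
Your proof is correct, and it takes a genuinely different route from the paper. You reduce the hard direction to the max-flow/min-cut theorem on a capacitated network (or equivalently LP duality on the transportation polytope), and then verify that every finite-capacity cut has capacity at least $1$ using the Hall-type condition. The cut analysis is right: finiteness forces $\cN(X\setminus A)\subseteq B$, and then the hypothesis applied to $S=X\setminus A$ gives the required bound on the cut capacity. You also correctly flag the only real issue, which is that the naive Ford--Fulkerson termination argument for max-flow/min-cut fails with irrational capacities, and both of your fixes (LP duality over the reals, or rational approximation plus compactness of the transportation polytope) are sound. The paper instead proves the statement from scratch, without invoking max-flow/min-cut as a black box: it takes a maximizer of $\sum_{ij}c_{ij}$ over the relaxed polytope (which exists by compactness and needs no rationality), then, assuming the maximizer has total mass less than $1$, grows an alternating tree from a deficient row vertex $x_{i_0}$. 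If the tree ever reaches an unsaturated column vertex it produces an augmenting path contradicting maximality; otherwise the tree stalls and the resulting set $S$ of row vertices violates the Hall condition. This is essentially an inlined, hand-rolled max-flow argument adapted to the weighted bipartite setting. The net effect is the same; your version is shorter and more modular at the cost of citing a nontrivial external theorem, while the paper's is self-contained, which is why they chose to spell it out.
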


\begin{proof}
	Clearly item 1 implies item 2, since $\sum_{x_i\in S} a_i = \sum_{x_i\in S}\sum_j c_{ij} = \sum_{x_i\in S}\sum_{y_j\in \cN(S)} c_{ij}  \le \sum_{y_j\in \cN(S)} b_j$. To prove the converse, assume the opposite of item 1. Let $\{c_{ij}\}$ be the maximizer of $\sum_{ij}c_{ij}$ in the set of nonnegative matrices satisfying
	\begin{equation}\label{cij}
		c_{ij}=0,\,\forall(x_i,y_j)\notin \cE,\quad \sum_i c_{ij} \le b_j,\,j=1,\dots,m,\quad \sum_j c_{ij} \le a_i,\,i=1,\dots,n.
	\end{equation}
	then $\sum_{ij}c_{ij}<1$, which implies that there exists $i_0$ such that $\sum_j c_{i_0j} < a_{i_0}$. Define $S_0=\{x_{i_0}\}$, and we iteratively define $S_k\subseteq X,T_k\subseteq Y$ as follows:
	\begin{itemize}
		\item $T_k = \cN(S_{k-1})$.
		\item $S_k =  S_{k-1}\cup \cN_c(T_k)$ where $\cN_c(T_k) = \{x_i: \text{ there exists } y_j\in T_k \text{ such that }c_{ij}>0\}$.
		\item If $(S_k,T_k)=(S_{k-1},T_{k-1})$ for some $k$, then the iteration stops.
	\end{itemize}
	It is clear that $\{S_k\}$ and $\{T_k\}$ are nondecreasing sequence of sets, and therefore the iteration stops at some finite $k$. 
	
	We claim that every $y_j\in T_k$ satisfies $\sum_i c_{ij} =  b_j$. Otherwise, let $k$ be the first time there exists $y_{j_k}\in T_k$ with $\sum_i c_{ij_k} < b_{j_k}$, and then $\sum_i c_{ij}=b_j>0$ for any $y_j\in T_l,\,l=1,\dots,k-1$. Then by the iteration procedure, we have a sequence of distinct elements
	\begin{equation}
		x_{i_0}, y_{j_1},x_{i_1}, y_{j_2},\dots,x_{i_{k-1}},y_{j_k},
	\end{equation}
	such that $(x_{i_l},y_{j_{l+1}})\in \cE,\,l=0,1,\dots,k-1$ and $c_{i_l j_l}>0,\,l=1,2,\dots,k$. Then, we define $\{\tilde{c}_{ij}\}$ being the same as $\{c_{ij}\}$ except the changes
	\begin{equation}
		\tilde{c}_{i_l j_{l+1}} = c_{i_l j_{l+1}}+\epsilon,\,l=0,1,\dots,k-1,\quad \tilde{c}_{i_l j_l} = c_{i_l j_l}-\epsilon,\,l=1,2,\dots,k,
	\end{equation}
	for $\epsilon>0$ small. Then $\sum_j \tilde{c}_{i_0 j} = \sum_j c_{i_0 j}+\epsilon < a_{i_0}$, $\sum_i \tilde{c}_{ij_k} = \sum_i c_{ij_k} + \epsilon < b_{j_k}$, and all the other $\sum_i \tilde{c}_{ij}$ and $\sum_j \tilde{c}_{ij}$ are the same for those with $c_{ij}$. Therefore $\{\tilde{c}_{ij}\}$ also satisfies \eqref{cij} with $\sum_{ij}\tilde{c}_{ij} = \sum_{ij}c_{ij}+\epsilon$, contradicting the maximality of $\sum_{ij}c_{ij}$.
	
	Denote the final state of the iteration as $(S,T)$, then $T= \cN(S)$ and $ \cN_c(T)\subseteq S$, and every $y_j\in T$ satisfies $\sum_i c_{ij} =  b_j$. Then
	\begin{equation}
		\sum_{x_i\in S} a_i > \sum_{x_i\in S} \sum_j c_{ij} = \sum_{x_i\in S} \sum_{y_j\in \cN(S)} c_{ij} =  \sum_{y_j\in \cN(S)} \sum_{x_i\in S}c_{ij} =  \sum_{y_j\in \cN(S)} \sum_i c_{ij} =  \sum_{y_j\in \cN(S)} b_j,
	\end{equation}
	where the first inequality uses \eqref{cij} and $x_{i_0}\in S$; the second equality uses the fact that $y_j\in \cN(S)=T$ and $\cN_c(T)\subseteq S$. Therefore this $S$ contradicts item 2 in the statement of the lemma.
	
\end{proof}

\begin{remark}
	When the weights $a_i,b_j$ are all rational numbers, Lemma \ref{lem_hall} is a direct consequence of the classical Hall's Theorem \cite{Hall}. However, it is necessary for us to treat the case of irrational weights, because there exist probability measures $\rho$ which cannot be approximated by empirical measures in \eqref{lem_dinfty1_1} with rational weights in the sense of the $d_\infty$ distance. It is clear that $\rho=\frac{1}{\sqrt{2}}\delta(x) + (1-\frac{1}{\sqrt{2}})\delta(x-\frac{1}{2})$ in 1D is such an example.
\end{remark}

This lemma allows us to prove Theorem \ref{lem_dinfty} in the case of weighted empirical measures.

\begin{lemma}\label{lem_dinfty1}
Theorem \ref{lem_dinfty} holds if 
	\begin{equation}\label{lem_dinfty1_1}
		\rho_1(\bx) = \sum_{i=1}^n a_i\delta(\bx-\bx_i),\quad \rho_2(\bx) = \sum_{i=1}^m b_j\delta(\bx-\by_j),
	\end{equation}
	for some $n,m\in\mathbb{N}$, $\bx_1,\dots,\bx_n,\by_1,\dots,\by_m\in\mathbb{T}^d$, $a_i,b_j>0$, $\sum_i a_i=\sum_j b_j = 1$. Furthermore, in this case the supremum on the RHS of \eqref{lem_dinfty_1} can be achieved.
\end{lemma}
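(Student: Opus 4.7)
The plan is to prove the remaining direction $d_\infty(\rho_1,\rho_2)\le\textnormal{RHS}$ of \eqref{lem_dinfty_1} via the contrapositive: if $r>0$ satisfies $\int_S\rho_1\rd{\bx}\le\int_{S_r}\rho_2\rd{\by}$ for every $S\subseteq\mathbb{T}^d$, then $d_\infty(\rho_1,\rho_2)<r$ strictly. This strict gap is what will later promote the supremum on the right-hand side to a genuine maximum.

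To carry this out I will recast the transport problem as a weighted bipartite matching and invoke Lemma~\ref{lem_hall}. Form the bipartite graph $(V,\cE)$ with $V=X\cup Y$, $X=\{\bx_1,\dots,\bx_n\}$, $Y=\{\by_1,\dots,\by_m\}$, and an edge $(\bx_i,\by_j)\in\cE$ iff $\dist(\bx_i,\by_j)<r$. For $S\subseteq X$, viewed as a subset of $\mathbb{T}^d$, the neighborhood in the graph is exactly $\cN(S)=\{\by_j:\by_j\in S_r\}$, so $\sum_{\bx_i\in S}a_i=\int_S\rho_1\rd{\bx}$ and $\sum_{\by_j\in\cN(S)}b_j=\int_{S_r}\rho_2\rd{\by}$. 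The hypothesis specialized to this $S$ is therefore exactly condition~2 of Lemma~\ref{lem_hall}, which in turn produces a nonnegative matrix $\{c_{ij}\}$ with the prescribed row and column sums and with $c_{ij}=0$ whenever $\dist(\bx_i,\by_j)\ge r$. The measure $\mu=\sum_{i,j}c_{ij}\,\delta(\bx-\bx_i)\delta(\by-\by_j)$ is then a transport plan from $\rho_1$ to $\rho_2$ whose support is a finite subset of $\{(\bx,\by):\dist(\bx,\by)<r\}$; a maximum over finitely many numbers each strictly below $r$ is itself $<r$, so $d_\infty(\rho_1,\rho_2)<r$ as claimed.

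Letting $r\downarrow\textnormal{RHS}$ then yields $d_\infty(\rho_1,\rho_2)\le\textnormal{RHS}$ and completes the equality. For the attainment of the supremum, suppose for contradiction that no $S\subseteq\mathbb{T}^d$ satisfies $\int_S\rho_1\rd{\bx}>\int_{S_{d_\infty}}\rho_2\rd{\by}$. Then the hypothesis of the contrapositive holds at $r=d_\infty(\rho_1,\rho_2)$, producing the contradiction $d_\infty<d_\infty$. Hence such an $S$ must exist, so $d_\infty(\rho_1,\rho_2)$ itself lies in the set on the right-hand side of \eqref{lem_dinfty_1}.

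The main obstacle is already packaged in Lemma~\ref{lem_hall}: because the weights $a_i,b_j$ may be irrational, the classical integer form of Hall's theorem is insufficient and the weighted version is genuinely needed. The remaining delicacy is the systematic use of \emph{open} balls in defining $\cE$, so that Hall's lemma supplies a transport plan whose support pairs have \emph{strict} distance below $r$; combined with the finiteness of that support, this is exactly what delivers both $d_\infty<r$ and the promotion of the supremum in \eqref{lem_dinfty_1} to a maximum.
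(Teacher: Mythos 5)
Your proof is correct and follows essentially the same route as the paper: you build the bipartite graph with edges given by the strict distance condition $\dist(\bx_i,\by_j)<r$, invoke the weighted Hall lemma (Lemma~\ref{lem_hall}) to pass between the nonexistence of a transport plan and the existence of a violating set $S$, and read off both the inequality and the attainment of the supremum. The paper phrases this directly (taking $r=d_\infty(\rho_1,\rho_2)$ and exhibiting the maximizing $S$ at once) whereas you phrase it via the contrapositive and a limit $r\downarrow\textnormal{RHS}$, but the underlying argument — and in particular the observation that the strict inequality in defining $\cE$ forces a strict gap $d_\infty<r$ — is identical; indeed your attainment step by itself already yields $d_\infty\le\textnormal{RHS}$, making the separate limiting argument redundant.
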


See Figure \ref{fig3} as an illustration.

\begin{figure}
	\begin{center}
		\includegraphics[width=.45\linewidth]{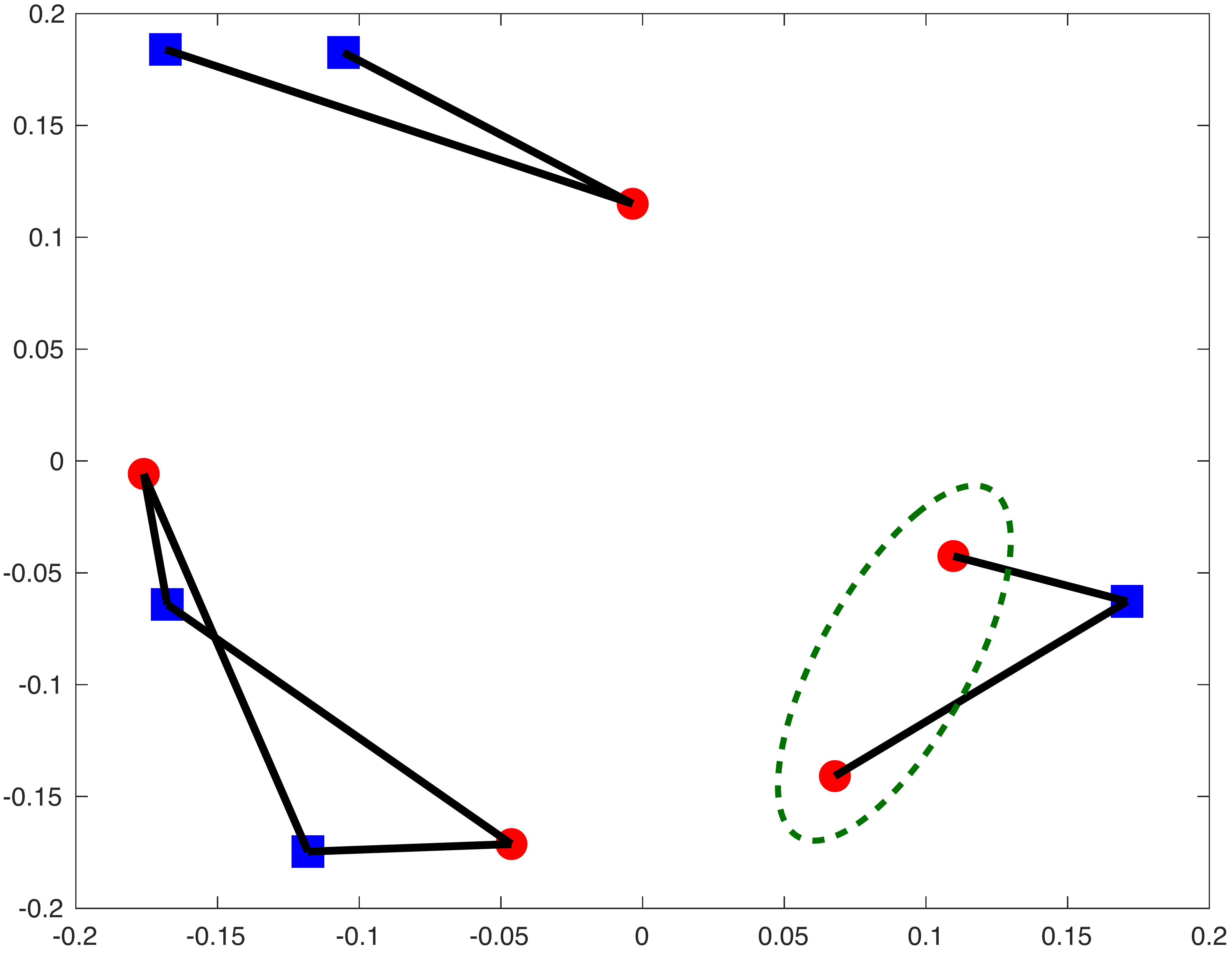}
		\includegraphics[width=.45\linewidth]{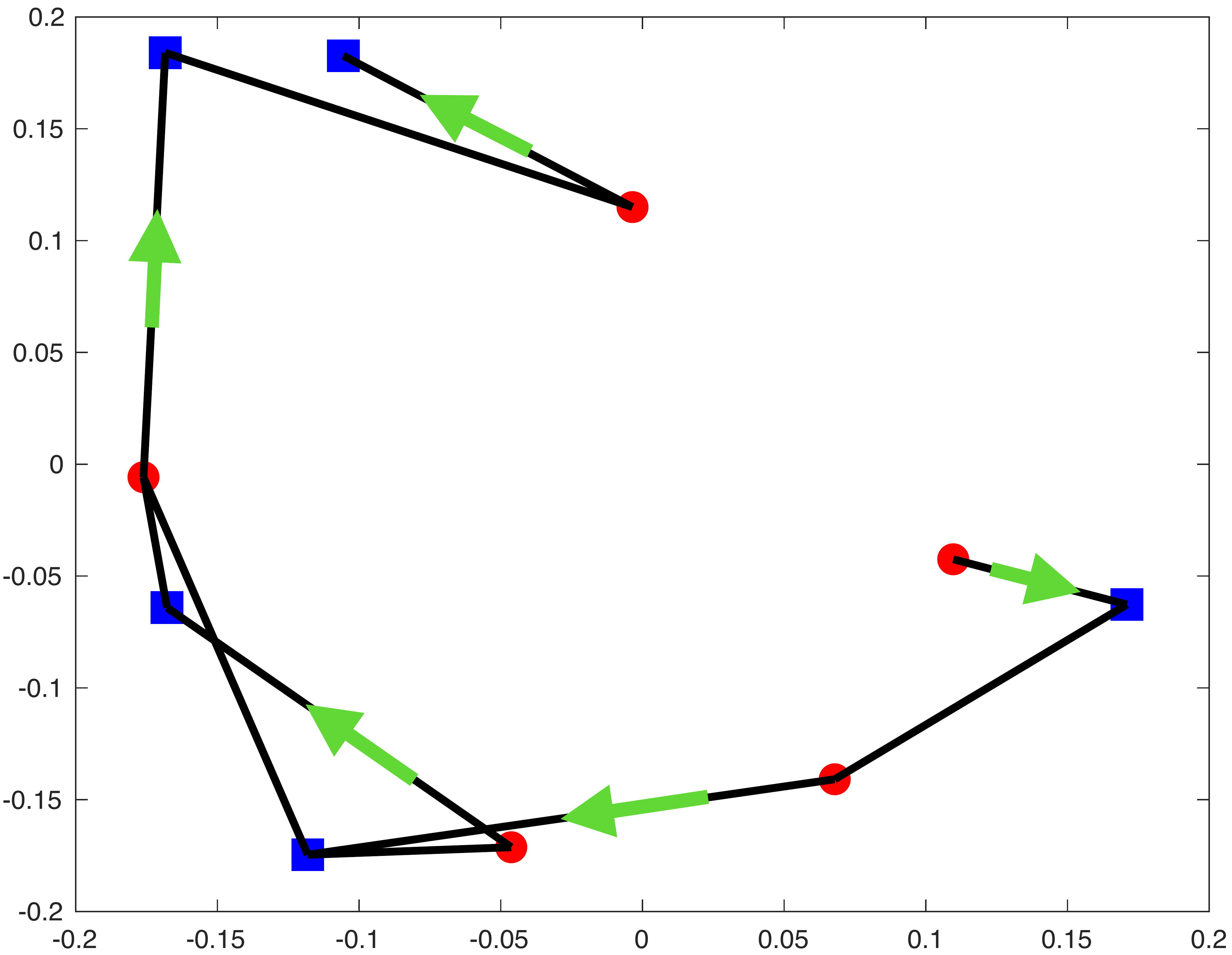}
		\caption{Proof of Lemma \ref{lem_dinfty1}. Here $\rho_1=\frac{1}{N}\sum_{j=1}^N \delta(\bx-\bx_j),\,N=5$, is shown by the red spots, and $\rho_2=\frac{1}{N}\sum_{j=1}^N \delta(\bx-\by_j)$ is shown by the blue squares. The two pictures are the graph $(V,\cE_r)$ (with the black segments represent the edges), for $r=0.18$ and $r=0.19$ respectively. $(V,\cE_{0.18})$ does not admit a perfect matching because the set of red spots in the dashed circle violates item 2 of Lemma \ref{lem_hall}. $(V,\cE_{0.19})$ admits a perfect matching, as indicated by the green arrows, which provide a transport plan from $\rho_1$ to $\rho_2$. Therefore one can conclude $0.18 \le d_\infty(\rho_1,\rho_2) < 0.19$. }
		\label{fig3}
	\end{center}
\end{figure}

\begin{proof}
	
	It suffices to prove the $\le$ direction in \eqref{lem_dinfty_1}. For $r>0$, define a bipartite graph $(V,\cE_r)$ by
	\begin{equation}
		V = \{\bx_1,\dots,\bx_n\}\cup  \{\by_1,\dots,\by_m\},\quad \cE_r = \{(\bx_i,\by_j): |\bx_i-\by_j|<r\}.
	\end{equation}
	Notice that the $\{c_{ij}\}$ in Lemma \ref{lem_hall}, if exists, would provide a transport plan $\mu(\bx,\by)=\sum_{ij} c_{ij}\delta(\bx-\bx_i)\delta(\by-\by_j)$ with $\max_{(\bx,\by)\in\supp\mu} |\bx-\by|<r$. Therefore, taking $r= d_\infty(\rho_1,\rho_2)$, there does not exist such $\{c_{ij}\}$ by the definition of $d_\infty$. By Lemma \ref{lem_hall}, this implies the existence of $S\subseteq \{\bx_1,\dots,\bx_N\}$ such that
	\begin{equation}
		\sum_{\bx_i\in S} a_i > \sum_{\by_j\in \cN(S)} b_j,
	\end{equation}
	that is,
	\begin{equation}
		\int_S \rho_1\rd{\bx} > \int_{S_r}\rho_2\rd{\bx},
	\end{equation}
	Therefore we get 
	\begin{equation}
		d_\infty(\rho_1,\rho_2) \le \sup\Big\{r:\exists S \text{ s.t. }\int_S \rho_1\rd{\bx} > \int_{S_r}\rho_2\rd{\bx}\Big\}.
	\end{equation}
	and the supremum on the RHS can be achieved, which finishes the proof.
	
\end{proof}

\begin{proof}[Proof of Theorem \ref{lem_dinfty}]
	
	It suffices to prove the $\le$ direction in \eqref{lem_dinfty_1}. Identify $\mathbb{T}^d$ as $[0,1)^d$, and denote $G_N=\{0,\frac{1}{N}\dots,\frac{N-1}{N}\}^d$ as the set of grid points for $N\in\mathbb{N}$. Define
	\begin{equation}
		\rho_{1,N}(\bx) = \sum_{\bj\in G_N} m_{1,\bj}\delta(\bx-\bj) \in \cM,\quad m_{1,\bj} := \int_{\by-\bj\in [0,\frac{1}{N})^d}\rho(\by)\rd{\by},
	\end{equation}
	as an approximation of $\rho_1$, and similarly define $\rho_{2,N}$. It is clear that
	\begin{equation}\label{rho1N}
		d_\infty(\rho_1,\rho_{1,N})\le \frac{\sqrt{d}}{N},\quad d_\infty(\rho_2,\rho_{2,N})\le \frac{\sqrt{d}}{N}.
	\end{equation}
	
	Then applying Lemma \ref{lem_dinfty1} gives
	\begin{equation}
		d_\infty(\rho_{1,N},\rho_{2,N}) = \max\Big\{r:\exists S \text{ s.t. }\int_S \rho_{1,N}\rd{\bx} > \int_{S_r}\rho_{2,N}\rd{\bx}\Big\}.
	\end{equation}
	Therefore there exists some set $S^{(N)}$, which is a subset of $G_N$ due to the proof of Lemma \ref{lem_dinfty1}, such that 
	\begin{equation}\label{rho1N_1}
		\int_{S^{(N)}} \rho_{1,N}\rd{\bx} > \int_{S_{r_N}^{(N)}}\rho_{2,N}\rd{\bx},\quad r_N=d_\infty(\rho_{1,N},\rho_{2,N}) \ge d_\infty(\rho_1,\rho_2)-\frac{2\sqrt{d}}{N}.
	\end{equation}

	Define 
	\begin{equation}
		\tilde{S}^{(N)} = \Big\{\bx: \bx-\bj\in [0,\frac{1}{N})^d \text{ for some }\bj\in S^{(N)}\Big\}.
	\end{equation}
	Then it is clear that  
	\begin{equation}\label{rho1N_2}
		\int_{S^{(N)}}\rho_{1,N}\rd{\bx} = \int_{\tilde{S}^{(N)}}\rho_1\rd{\bx},\quad \tilde{S}^{(N)}\subseteq (S^{(N)})_{\sqrt{d}/N},
	\end{equation}
	where the subscript $\sqrt{d}/N$ is interpreted as in \eqref{Sr}. Also, by \eqref{rho1N} and the $\ge$ direction of \eqref{lem_dinfty_1}, we have
	\begin{equation}
		\int_{T}\rho_{2}\rd{\bx} \le \int_{T_{\sqrt{d}/N+\epsilon}}\rho_{2,N}\rd{\bx},
	\end{equation}
	for any $T\subseteq \mathbb{T}^d$ and $\epsilon>0$. Applying this with $T=(\tilde{S}^{(N)})_{r-2\sqrt{d}/N}$ and $\epsilon=\sqrt{d}/N$, we get
	\begin{equation}
		\int_{(\tilde{S}^{(N)})_{r}}\rho_{2,N}\rd{\bx} \ge \int_{(\tilde{S}^{(N)})_{r-2\sqrt{d}/N}}\rho_2\rd{\bx},
	\end{equation}
	for any $r\ge2\sqrt{d}/N$. Therefore, combined with \eqref{rho1N_2} and \eqref{rho1N_1}, we get
	\begin{equation}
		\int_{\tilde{S}^{(N)}} \rho_1\rd{\bx} = \int_{S^{(N)}}\rho_{1,N}\rd{\bx} > \int_{(S^{(N)})_{r_N}}\rho_{2,N}\rd{\bx} \ge \int_{(\tilde{S}^{(N)})_{r_N-\sqrt{d}/N}}\rho_{2,N}\rd{\bx} \ge   \int_{(\tilde{S}^{(N)})_{r_N-3\sqrt{d}/N}}\rho_2\rd{\bx},
	\end{equation}
	where the second inequality uses the fact $(\tilde{S}^{(N)})_{r_N-\sqrt{d}/N}\subseteq (S^{(N)})_{r_N}$, coming from $\tilde{S}^{(N)}\subseteq (S^{(N)})_{\sqrt{d}/N}$ in \eqref{rho1N_2}. This implies that the RHS of \eqref{lem_dinfty_1} is at least $r_N-\frac{3\sqrt{d}}{N} \ge  d_\infty(\rho_1,\rho_2)-\frac{5\sqrt{d}}{N}$. Sending $N\rightarrow\infty$, we get the conclusion.
\end{proof}

\begin{remark}
	From the proof, it is clear that Theorem \ref{lem_dinfty} is also true if $\mathbb{T}^d$ is replaced by any compact Riemannian manifold, or more generally, any locally compact Riemannian manifold with $\rho_1$ and $\rho_2$ compactly supported.
\end{remark}

\section{Proof of Theorem \ref{thm1}, the case $1\le p \le d$}\label{sec_1pd}

In this section we prove Theorem \ref{thm1} in the case $1\le p \le d$. We first need an isoperimetric inequality. The classical isoperimetric inequality \cite{Oss} takes the form $|S_r\backslash S| \ge c r |S|^{(d-1)/d}$ for any bounded set $S\subseteq \mathbb{R}^d$ and $r>0$. However, for a set $S\subseteq \mathbb{T}^d$, it may happen that both $|S^c|$ and $|S_r\backslash S|$ are small but $|S|=O(1)$. Therefore we need an improvement which takes the following form.
\begin{lemma}\label{lem_iso}
	Let $S$ be a nonempty subset of $\mathbb{T}^d$, and $r>0$. Assume $S_r^c\ne\emptyset$. Then
	\begin{equation}
		|S_r\backslash S| \ge c r \min\{|S|,|S^c|\}^{\frac{d-1}{d}}.
	\end{equation}
\end{lemma}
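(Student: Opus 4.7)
The plan is to derive the inequality from the standard torus isoperimetric inequality by a coarea / differential-inequality argument.

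First, I would invoke (or prove) the classical isoperimetric inequality on $\mathbb{T}^d$: every measurable set $E\subseteq \mathbb{T}^d$ with $0<|E|<1$ satisfies $P(E) \ge c_d \min\{|E|,|E^c|\}^{(d-1)/d}$, where $P(E)$ is the perimeter. This is standard, and follows, e.g., from a Brunn--Minkowski argument lifted to a fundamental domain when $|E|$ is small, combined with a compactness / spectral-gap argument when $|E|$ is bounded away from $0$ and $1$. Setting $f(t):=|S_t|$ and using the Minkowski-content formula $f(r)-f(0) \ge \int_0^r P(S_t)\,dt$, this converts into the distributional differential inequality
$$f'(t) \ge c_d \min\{f(t),\, 1-f(t)\}^{(d-1)/d}, \qquad f(0)=|S|.$$

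Next I would integrate this ODI in three regimes. When $f(r)\le 1/2$, separation of variables gives $f(r)^{1/d}-|S|^{1/d}\ge c_d r/d$; the elementary estimate $a^d-b^d\ge d b^{d-1}(a-b)$ (for $a\ge b\ge 0$), applied with $a=f(r)^{1/d}$, $b=|S|^{1/d}$, converts this into $|S_r\setminus S|\ge c\,r|S|^{(d-1)/d}$. When $|S|>1/2$ (so $f(t)>1/2$ on all of $[0,r]$) the dual computation gives $|S^c|^{1/d}-(1-f(r))^{1/d}\ge c_d r/d$; a short subcase split on whether $(1-f(r))^{1/d}\ge \tfrac{1}{2}|S^c|^{1/d}$ or not (in the first, apply the same elementary estimate; in the second, exploit $|S^c|-(1-f(r))\ge (1-2^{-d})|S^c|$ together with $|S^c|^{1/d}\ge c_d r/d$) converts this into $|S_r\setminus S|\ge c\,r|S^c|^{(d-1)/d}$. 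Finally, in the mixed regime $|S|\le 1/2<f(r)$, let $t^*:=\inf\{t\ge 0:f(t)\ge 1/2\}\le r$; apply the first regime on $[0,t^*]$ and the second on $[t^*,r]$ with new initial datum $f(t^*)=1/2$, then add the two estimates and use $|S|^{(d-1)/d}\le 1$ to bound the contribution from $[t^*,r]$ below by $c(r-t^*)|S|^{(d-1)/d}$; the total is at least $\min(c,c')\,r\,|S|^{(d-1)/d}$, which is the required estimate since $\min\{|S|,|S^c|\}=|S|$ here.

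The hypothesis $S_r^c\ne\emptyset$ is used solely to ensure $f(r)<1$, so none of the integrands above degenerate. The main step requiring care, and the one I expect to be the chief technical obstacle, is the rigorous interpretation of the ODI (so that the case-wise integration across the threshold $f=1/2$ is justified without loss of constants), together with, depending on what the paper wishes to treat as a black box, supplying a self-contained proof of the torus isoperimetric inequality with explicit dimensional constants. Once those ingredients are in place, the three cases knit together routinely to produce the stated bound.
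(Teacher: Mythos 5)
Your argument is correct, but it takes a genuinely different route from the paper's. You reduce the lemma to the classical isoperimetric inequality on $\mathbb{T}^d$, convert it via the coarea formula for the distance function into the differential inequality $f'(t)\ge c_d\min\{f(t),1-f(t)\}^{(d-1)/d}$ for $f(t)=|S_t|$, and then integrate across the three regimes you describe; the elementary inequality $a^d-b^d\ge db^{d-1}(a-b)$ and the subcase split at $b\ge a/2$ are exactly what is needed to close the estimates, and the mixed regime is correctly handled by cutting at the first time $f$ reaches $1/2$. The paper instead proves the lemma directly and combinatorially: it restricts $S$ to coordinate lines, sorts the $(d-1)$-dimensional base points into ``empty,'' ``almost full,'' and ``intermediate'' classes $A_j,B_j,D_j$, controls $|D_j|$ by $|S_r\setminus S|/r$ via the one-dimensional expansion on each line, and then applies the Loomis--Whitney inequality to the projections, with a short bootstrap to absorb the error terms. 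The trade-off is clear: your route is conceptually cleaner and gives the sharp scaling with almost no bookkeeping, but it outsources the torus isoperimetric inequality (and the regularity of $t\mapsto|S_t|$, including the possible jump at $t=0^+$ when $|\partial S|>0$) as external inputs, whereas the paper's Loomis--Whitney argument is elementary, entirely self-contained, and needs no notion of perimeter, absolute continuity, or coarea --- at the cost of the somewhat fiddly case analysis. Both are valid; if you intend your version to be self-contained you would want to at least cite a precise reference for the periodic isoperimetric inequality in the form $P(E)\ge c_d\min\{|E|,|E^c|\}^{(d-1)/d}$ and justify $f'(t)\ge P(S_t)$ a.e.\ (via the coarea formula for $d(\cdot,S)$ and $\partial S_t\subseteq\{d=t\}$).
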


We also need a lemma on the layers of expansions of a set $S$.
\begin{lemma}\label{lem_layer}
	For any $S\subseteq \mathbb{T}^d$,
	\begin{equation}
		|S_{2r}\backslash S_r| \le C|S_r\backslash S|,
	\end{equation}
	with $C$ only depending on $d$.
\end{lemma}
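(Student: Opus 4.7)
The plan is to reduce the layer comparison to a discrete count via a Vitali cover of the outer shell $S_{2r}\setminus S_r$, then to inject into $S_r\setminus S$ a ball of comparable size near each Vitali ball, and finally to close with a bounded-overlap estimate. The geometric construction of the injected ball is the only non-routine step; the rest is standard covering-lemma bookkeeping.

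First, I will apply the $5r$-Vitali covering lemma to the family $\{B(y;r/4):y\in S_{2r}\setminus S_r\}$ to extract a disjoint subcollection $\{B(y_i;r/4)\}_{i=1}^N$ whose fivefold expansions $\{B(y_i;5r/4)\}$ still cover $S_{2r}\setminus S_r$. This yields $|S_{2r}\setminus S_r|\lesssim_d Nr^d$, reducing the task to proving $Nr^d\lesssim_d |S_r\setminus S|$.

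Next, for each $i$ I will pick a nearest point $x_i\in\overline{S}$ to $y_i$, so that $|x_i-y_i|=\dist(y_i,S)\in[r,2r)$, and put $z_i$ on the geodesic segment $[x_i,y_i]$ at distance $r/2$ from $x_i$. A short triangle-inequality check, using that $x_i$ is a nearest point (so for any $s\in\overline{S}$, $|s-z_i|\ge|s-y_i|-|z_i-y_i|\ge|x_i-y_i|-(|x_i-y_i|-r/2)=r/2$), gives $\dist(z_i,S)=r/2$ exactly. Hence every $w\in B(z_i;r/4)$ satisfies $r/4\le\dist(w,S)\le 3r/4$, which places $B(z_i;r/4)$ entirely inside $S_r\setminus S$.

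Finally, although the balls $B(z_i;r/4)$ need not be disjoint, $|z_i-y_i|\le 3r/2$ forces each of them inside $B(y_i;7r/4)$, while the centers $y_i$ are pairwise at distance $\ge r/2$ (because the balls $B(y_i;r/4)$ are disjoint). A standard volume packing count then shows that at most $O_d(1)$ of the $B(z_i;r/4)$ can meet any fixed point. Summing, $Nr^d\lesssim_d\sum_i|B(z_i;r/4)|\lesssim_d|S_r\setminus S|$, which closes the estimate. The only technicality to address is when $r$ is comparable to the diameter of $\mathbb{T}^d$: then $S_r=\mathbb{T}^d$ (provided $S\ne\emptyset$) and the left-hand side vanishes, while for small $r$ all balls behave Euclideanly and the constants depend only on $d$. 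The hard part, such as it is, is making sure the injected $z_i$ lies strictly away from $S$ as well as strictly inside $S_r$, and this is exactly what forces the choice of the nearest-point construction rather than, e.g., a midpoint.
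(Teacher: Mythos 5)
Your proof is correct, and it takes a genuinely different route from the paper's. The paper first replaces $S$ by its $r$-regularization $\Reg_r(S)$ (harmless, since this leaves $S_r$ and $S_{2r}$ unchanged while only enlarging $S$), covers a neighborhood of $\partial S$ by a Vitali family of balls centered on $\partial S$, and then invokes the structural property of regular sets (Lemma \ref{lem_Reg}, item 2: every boundary point of an $r$-regular set is tangent to a ball of radius $r$ lying outside $S$) to produce a definite chunk of $S_r\setminus S$ inside each Vitali ball; disjointness of the contracted Vitali balls then gives the count on both sides. You instead Vitali-cover the outer shell $S_{2r}\setminus S_r$ directly, and for each center $y_i$ use the nearest-point projection $x_i$ onto $\overline{S}$ to place $z_i$ at distance exactly $r/2$ from $S$, so that $B(z_i;r/4)\subseteq S_r\setminus S$; since the injected balls $B(z_i;r/4)$ need not be disjoint, you close with a standard bounded-overlap packing count using the $r/2$-separation of the $y_i$. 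Your route avoids the regularization machinery and the auxiliary Lemma \ref{lem_Reg} entirely, making the argument for this one lemma more elementary and self-contained; the trade-off is the extra bounded-overlap step. The paper's regularization is also used elsewhere (it is central to the proof of Theorem \ref{thm1} in Section \ref{sec_1pd}), so there is a reason the authors set it up here, but as a standalone proof of Lemma \ref{lem_layer} your nearest-point construction is slimmer and equally rigorous.
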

The proofs of both lemmas are in the Appendix.
%
%Some basic properties, although not used in the proof of Theorem \ref{thm1}, can be found in Section \ref{sec_reg}.

Let $\psi:\mathbb{R}^d\rightarrow \mathbb{R}$ be a nonnegative smooth radial function supported inside $B(0;1)$ with $\int_{\mathbb{R}^d}\psi\rd{\bx}=1$, and denote $\psi_\epsilon(\bx) = \frac{1}{\epsilon^d}\psi(\frac{\bx}{\epsilon})$ for $0<\epsilon< 1/2$. $\psi$ is a radial function in the Schwartz class, and so is $\hat{\psi}$, with $\hat{\psi}(0)=1$. $\psi_\epsilon$ can also be viewed as a smooth function on $\mathbb{T}^d$ (identified with $[-1/2,1/2)^d$), still denoted as $\psi_\epsilon $, whose Fourier coefficients are given by the values of $\hat{\psi}_\epsilon = \hat{\psi}(\epsilon \cdot)$ at integer points.

\begin{proof}[Proof of Theorem \ref{thm1}, $1\le p \le d$ case]
	
	In this proof we will write $W$ for $W_s$. We may assume $d_\infty(\rho,1)>0$. Apply Theorem \ref{lem_dinfty} to get a set $S$ such that 
	\begin{equation}\label{S3r}
		\int_S \rho\rd{\bx} > \int_{S_{3r}}1\rd{\bx} = |S_{3r}|,
	\end{equation}
	for some $ r>d_\infty(\rho,1)/(4\sqrt{d})$. We may assume $r<1/2$ because $d_\infty(\rho,1) \le \sqrt{d}/2$, and thus $\psi_r$ is well-defined on $\mathbb{T}^d$ and supported inside $B(0;r)$. 
	
     We	define the set and test function
%	\begin{equation}
%
%	\end{equation}
	%is also $3r$-regular, by item 4 of Lemma \ref{lem_Reg}. 
%	Define a test function
	\begin{equation}\label{g}
	T:=S_{3r}^c, \quad	g = \chi_{\tilde{S}}*\psi_{r},\quad \tilde{S} := \Reg_{2r}(S_{r}) = T_{2r}^c,
	\end{equation}
	which is supported inside $\tilde{S}_r$ and takes values in $[0,1]$. It is clear that $\dist(\tilde{S},T)\ge 2r$. Therefore $\dist(\tilde{S}_{r},T)\ge r$. Therefore
	\begin{equation}\label{STeq1}
		\tilde{S}_{r}\cap T_{r} = \supp g \cap T_{r}=\emptyset.
	\end{equation}
	Also notice that for any $\bx\in S$, we have $B(\bx;r) \subseteq S_r \subseteq \tilde{S}$, and thus $g|_S = 1$. Therefore, combining with \eqref{S3r}, we get
	\begin{equation}\begin{split}
			\int_{\mathbb{T}^d} g(\bx)(\rho(\bx)-1)\rd{\bx} \ge & \int_S \rho(\bx)\rd{\bx} - \int_{\supp g} 1\rd{\bx} > |T^c| - |\supp g| = |T^c\backslash \supp g| \\
			\ge &  |T^c\backslash T_r^c| =  |T_{r}\backslash T|.
	\end{split}\end{equation}
	Lemma \ref{lem_layer} applied to $T$ gives
	\begin{equation}
		|T_{2r}\backslash T_{r}| \le C |T_{r}\backslash T|.
	\end{equation}
	By \eqref{STeq1}, we have
	\begin{equation}
		|T_{2r}\backslash T_{r}| = |\tilde{S}^c \backslash T_{r}| \ge |\tilde{S}^c \backslash \tilde{S}_{r}^c| = |\tilde{S}_{r}\backslash \tilde{S}|.
	\end{equation}
	Therefore we get the lower bound
	\begin{equation}\label{eqg0}
		\int_{\mathbb{T}^d} g(\bx)(\rho(\bx)-1)\rd{\bx} \ge c|\tilde{S}_{r}\backslash \tilde{S}|.
	\end{equation}
	
	Then we use Fourier expressions to give an upper bound
	\begin{equation}\label{eqg2}\begin{split}
			\int_{\mathbb{T}^d} g(\bx)(\rho(\bx)-1)\rd{\bx} = & \sum_{\bk\ne 0} \hat{g}(\bk)\bar{\hat{\rho}}(\bk) = \sum_{\bk\ne 0} \hat{\psi}(r\bk)\hat{\chi}_{\tilde{S}}(\bk)\bar{\hat{\rho}}(\bk) = \sum_{\bk\ne 0} \frac{\hat{\psi}(r\bk)}{\hat{W}(\bk)}\hat{\chi}_{\tilde{S}}(\bk)\bar{\hat{W}}(\bk)\bar{\hat{\rho}}(\bk) \\
			= & \int_{\mathbb{T}^d}\cF^{-1}\Big(\frac{\hat{\psi}(r\cdot)}{\hat{W}(\cdot)}\hat{\chi}_{\tilde{S}}(\cdot)\Big)(\bx) (W*\rho)(\bx)\rd{\bx} \\
			\le & \Big\|\cF^{-1}\Big(\frac{\hat{\psi}(r\cdot)}{\hat{W}(\cdot)}\hat{\chi}_{\tilde{S}}(\cdot)\Big)\Big\|_{L^q}\|W*\rho\|_{L^p},
	\end{split}\end{equation}
	with $1/p+1/q=1$. Here we used the fact that $\hat{W}(\bk)$ is real and nonzero for $\bk\ne 0$, and the $\bk=0$ coefficient of the quantity inside $\cF^{-1}$ is viewed as 0. Notice that
	\begin{equation}
		\cF^{-1}\Big(\frac{\hat{\psi}(r\cdot)}{\hat{W}(\cdot)}\hat{\chi}_{\tilde{S}}(\cdot)\Big) = \chi_{\tilde{S}} * u_{r} = \chi_{\tilde{S}^c} * u_{r},\quad u_r:= \cF^{-1}\Big(\frac{\hat{\psi}(r\cdot)}{\hat{W}(\cdot)}\Big)= \cF^{-1}\Big(\hat{\psi}(r\cdot)|\cdot|^{d-s}\Big),
	\end{equation}
	using the fact that $u_r$ is mean-zero on $\mathbb{T}^d$. Therefore, by Young's inequality,
	\begin{equation}\label{eqg3}
		\Big\|\cF^{-1}\Big(\frac{\hat{\psi}(r\cdot)}{\hat{W}(\cdot)}\hat{\chi}_{\tilde{S}}(\cdot)\Big)\Big\|_{L^q} \le \min\{\|\chi_{\tilde{S}}\|_{L^q},\|\chi_{\tilde{S}^c}\|_{L^q}\}\|u_{r}\|_{L^1} = \min\{|\tilde{S}|,|\tilde{S}^c|\}^{1/q}\|u_{r}\|_{L^1}.
	\end{equation}
	Lemma \ref{lem_u} stated below implies that $\|u_{r}\|_{L^1} \le Cr^{-d+s}$. Combined with \eqref{eqg0} and \eqref{eqg2}, we get
	\begin{equation}
		|\tilde{S}_{r}\backslash \tilde{S}| \le  C\|W*\rho\|_{L^p}\min\{|\tilde{S}|,|\tilde{S}^c|\}^{1/q}r^{-d+s},
	\end{equation}
	i.e.,
	\begin{equation}\label{Sar_p}
		\|W*\rho\|_{L^p} \ge c r^{d-s}\frac{|\tilde{S}_{r}\backslash \tilde{S}|}{\min\{|\tilde{S}|,|\tilde{S}^c|\}^{1/q}} \ge c r^{d-s+1} \min\{|\tilde{S}|,|\tilde{S}^c|\}^{\frac{d-1}{d}-\frac{1}{q}},
	\end{equation}
	by applying Lemma \ref{lem_iso} to $(\tilde{S},r)$. Notice that the last power
	\begin{equation}
		\frac{d-1}{d}-\frac{1}{q} = 1-\frac{1}{d}-\frac{1}{q}=\frac{1}{p}-\frac{1}{d} \ge 0,
	\end{equation}
	for $1\le p\le d$. Therefore, from the fact that $\min\{|\tilde{S}|,|\tilde{S}^c|\} \ge cr^d$ (since both contain at least a ball of radius $r$), we see that $\min\{|\tilde{S}|,|\tilde{S}|^c\}^{\frac{d-1}{d}-\frac{1}{q}}\ge (cr^d)^{\frac{d-1}{d}-\frac{1}{q}} = cr^{\frac{d}{p}-1}$. Therefore, \eqref{Sar_p} gives
	\begin{equation}
		\|W*\rho\|_{L^p} \ge c r^{d-s+\frac{d}{p}},
	\end{equation}
	which is the conclusion.
\end{proof}
\begin{remark}
	In the case $p=1$, one could simplify the proof by taking $g=\chi_{S_r}*\psi_r$, without using regularization, Lemma \ref{lem_layer} or Lemma \ref{lem_iso}. The reason is that the quantity $\min\{|\tilde{S}|,|\tilde{S}^c|\}$ does not appear in \eqref{Sar_p} since $q=\infty$, and one can directly bound the numerator (now $|S_{2r}\backslash S_r|$ after replacing $\tilde{S}$ by $S_r$) from below by $cr^d$ and finish the proof.
\end{remark}
\begin{remark}
	The use of regularization is essential here. It is worth noticing that $S$ is $3r$-regular does not imply $S_r$ being $2r$-regular. To see this, one can consider the example where $S$ is a set of two isolated points with distance $2\sqrt{5}r$.
\end{remark}
\begin{remark}
For $p>d\ge 2$, we have at least a trivial bound $d_\infty(\rho,1) \le C \|W_s*\rho\|_{L^d}^{1/(d+1-s)} \le C \|W_s*\rho\|_{L^p}^{1/(d+1-s)}$ since $\bT^d$ has finite measure. However we do not know whether this estimates is sharp in terms of the scaling.
\end{remark}

\begin{lemma}\label{lem_u}
	Let $1\le q \le \infty$, $0<\epsilon< 1/2$, $\beta \ge 0$, and 
	\begin{equation}
		u_\epsilon:= \cF^{-1}\Big(\hat{\psi}(\epsilon \bk)|\bk|^\beta\Big),
	\end{equation}
	be a function on $\mathbb{T}^d$. There holds
	\begin{equation}
		\|u_\epsilon\|_{L^q} \le C\epsilon^{-\beta-d/p},
	\end{equation}
	with $C$ independent of $\epsilon$ and $1/p+1/q=1$.
\end{lemma}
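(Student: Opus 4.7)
\textbf{Proof plan for Lemma \ref{lem_u}.} The plan is to lift the problem from $\mathbb{T}^d$ to $\mathbb{R}^d$ by viewing $u_\epsilon$ as a periodization and then reducing to a fixed, $\epsilon$-independent scaling computation. Introduce the function $v_\epsilon$ on $\mathbb{R}^d$ with $\hat{v}_\epsilon(\xi) = \hat\psi(\epsilon\xi)\,|\xi|^\beta$. By the scaling rule for the Fourier transform,
\[
v_\epsilon(\bx) = \epsilon^{-d-\beta}\, w(\bx/\epsilon), \qquad w := \cF^{-1}\bigl(\hat\psi(\xi)\,|\xi|^\beta\bigr),
\]
and a change of variables gives $\|v_\epsilon\|_{L^q(\mathbb{R}^d)} = \epsilon^{-\beta - d/p}\,\|w\|_{L^q(\mathbb{R}^d)}$, which already identifies the target scaling.

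The first step is to show $w \in L^q(\mathbb{R}^d)$ for every $q \in [1,\infty]$ via the sharper pointwise estimate $|w(\bx)| \lesssim (1+|\bx|)^{-d-\beta}$. Boundedness of $w$ is immediate since $\hat w = \hat\psi(\xi)\,|\xi|^\beta \in L^1(\mathbb{R}^d)$ (as $\hat\psi$ is Schwartz). For the decay at infinity, I would invoke the distributional Riesz identity $\cF^{-1}(|\xi|^\beta) = c_\beta\, |\bx|^{-d-\beta}$, which gives the representation $w = c_\beta\, \psi * |\cdot|^{-d-\beta}$; using that $\supp\psi \subseteq B(0;1)$, one has $|\bx - \by| \ge |\bx|/2$ throughout $\supp\psi$ for $|\bx| \ge 2$, hence $|w(\bx)| \lesssim |\bx|^{-d-\beta}$ there. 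Alternatively, one may split $\hat w$ by a smooth cutoff at the origin, treat the high-frequency piece as Schwartz, and handle the low-frequency piece by integration by parts in the Fourier integral, using that $\partial_\xi^\alpha |\xi|^\beta \in L^1_{\rm loc}$ for $|\alpha| < d+\beta$.

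For the second step, since the Fourier coefficients of the periodization $\sum_{\bj\in\mathbb{Z}^d} v_\epsilon(\cdot + \bj)$ on $\mathbb{T}^d$ coincide with the defining sequence of $u_\epsilon$, one has $u_\epsilon(\bx) = \sum_\bj v_\epsilon(\bx + \bj)$ on $[-1/2,1/2)^d$. Minkowski's inequality gives
\[
\|u_\epsilon\|_{L^q(\mathbb{T}^d)} \le \sum_{\bj \in \mathbb{Z}^d} \|v_\epsilon\|_{L^q(\mathbb{T}^d + \bj)}.
\]
The $\bj = 0$ term is bounded by $\|v_\epsilon\|_{L^q(\mathbb{R}^d)} \lesssim \epsilon^{-\beta - d/p}$. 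For $\bj \ne 0$, points $\by \in \mathbb{T}^d + \bj$ satisfy $|\by| \gtrsim |\bj|$, so the pointwise decay of $w$ yields $|v_\epsilon(\by)| \lesssim \epsilon^{-d-\beta}(\epsilon/|\bj|)^{d+\beta} = |\bj|^{-d-\beta}$, whence $\|v_\epsilon\|_{L^q(\mathbb{T}^d + \bj)} \lesssim |\bj|^{-d-\beta}$ (unit-volume cell). For $\beta > 0$ the tail $\sum_{\bj\ne 0}|\bj|^{-d-\beta}$ converges to a bounded constant, which is absorbed into $C\,\epsilon^{-\beta - d/p}$ since $\epsilon < 1/2$ forces $\epsilon^{-\beta - d/p} \ge 1$; the degenerate case $\beta = 0$ is even simpler, because $v_\epsilon = \psi_\epsilon$ is compactly supported in a single fundamental cell, so $u_\epsilon = \psi_\epsilon$ and the bound reduces to $\|\psi_\epsilon\|_{L^q} = \epsilon^{-d/p}\|\psi\|_{L^q}$.

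The principal difficulty I expect is the decay estimate $|w(\bx)| \lesssim (1+|\bx|)^{-d-\beta}$ in Step 1: the symbol $|\xi|^\beta$ is not smooth at the origin for generic $\beta \ge 0$, so Schwartz-class decay of $w$ cannot be inherited directly from that of $\hat\psi$, and the low-frequency contribution must be treated explicitly. Once this sharp pointwise decay is available, the rest of the argument is essentially bookkeeping via Minkowski and the convergence of $\sum_{\bj\ne 0}|\bj|^{-d-\beta}$.
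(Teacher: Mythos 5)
Your proposal follows the same overall architecture as the paper's proof: lift the problem to $\mathbb{R}^d$ by unscaling $\epsilon$, establish pointwise decay of the profile $w = \cF^{-1}(\hat\psi\,|\cdot|^\beta)$, and then handle the torus via Poisson summation. The final accounting differs cosmetically (you use Minkowski's inequality plus a direct $L^q$ scaling; the paper estimates $\|u_\epsilon\|_{L^1}$ and $\|u_\epsilon\|_{L^\infty}$ separately and interpolates), but these are interchangeable. The substantive difference is in the decay estimate, and there your primary route has a genuine gap.

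The claim ``$\cF^{-1}(|\xi|^\beta) = c_\beta\,|\bx|^{-d-\beta}$'' cannot be taken at face value for $\beta\ge 0$. First, $|\bx|^{-d-\beta}$ is not locally integrable for $\beta>0$, so the right-hand side is not a function and must be read as a Hadamard finite-part (the analytic continuation of the Riesz family). Second, the identity breaks down entirely when $\beta$ is an even nonnegative integer: the constant $c_\beta \propto \Gamma(-\beta/2)^{-1}$ vanishes there, and in fact $\cF^{-1}(|\xi|^{2k})$ is a constant multiple of $\Delta^k\delta$, so that $w = c\,\Delta^k\psi$ is compactly supported rather than a convolution with a power law. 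Both issues are repairable --- for $|\bx|\ge 2$ the finite-part regularization is invisible because $\psi(\bx-\cdot)$ is supported away from the origin, and the even-integer case is only easier since $w$ is then Schwartz --- but as written the step ``$w = c_\beta\,\psi*|\cdot|^{-d-\beta}$'' would not compile without these caveats spelled out. Note also that the sharp decay $|w(\bx)|\lesssim (1+|\bx|)^{-d-\beta}$ you aim for is stronger than needed: the paper only proves $|w(\bx)|\lesssim |\bx|^{-(d+\beta)}+|\bx|^{-(d+1)}$ for $|\bx|\ge 1$, which is weaker for $\beta>1$ but still gives $w\in L^1\cap L^\infty$, which is all that the periodization step requires. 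Your alternative route (smooth cutoff at the origin plus integration by parts on the low-frequency piece) is precisely the paper's argument and avoids all these subtleties; it would be cleaner to promote that to the primary argument. Your separate treatment of $\beta=0$ (where $u_\epsilon=\psi_\epsilon$ exactly) is correct and necessary, since the decay $|\bx|^{-d}$ alone would not give integrability of the tail.
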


The proof of the $1\le p \le d$ case of Theorem \ref{thm1} only uses the case $q=1$ of Lemma \ref{lem_u}. The general case of Lemma \ref{lem_u} will be used in the next section.

\begin{proof}
	Define
	\begin{equation}
		u:=  \cF^{-1}\Big(\hat{\psi}(\xi)|\xi|^\beta\Big),
	\end{equation}
	as a function on $\mathbb{R}^d$. $u$ is well-defined and is in $L^\infty(\mathbb{R}^d)$ since $\hat{\psi}$ is in the Schwartz class $\mathcal{S}(\mathbb{R}^d)$, and $\beta\ge 0$. We first claim that $u\in L^1(\mathbb{R}^d)$. In fact, the case $\beta=0$ is trivial. To deal with the case $\beta>0$, we aim to derive an estimate
	\begin{equation}\label{udecay}
		|u(\bx)|\le C|\bx|^{-(d+\beta)} + C|\bx|^{-(d+1)},\quad \forall |\bx|\ge 1,
	\end{equation}
	which implies the claim. To get \eqref{udecay}, we take $\bx$ with $|\bx|\ge 1$, and write
	\begin{equation}
		u(\bx) = \int_{\mathbb{R}^d} \hat{\psi }(\xi)|\xi|^{\beta} e^{2\pi i \bx\cdot \xi}\rd{\xi} = \int_{|\xi|\le |\bx|^{-1}} + \int_{|\xi|> |\bx|^{-1}}.
	\end{equation}
	The first integral is estimated by
	\begin{equation}
		\left|\int_{|\xi|\le |\bx|^{-1}} \hat{\psi }(\xi)|\xi|^{\beta} e^{2\pi i \bx\cdot \xi}\rd{\xi}\right| \le C\int_{|\xi|\le |\bx|^{-1}} |\xi|^{\beta} \rd{\xi} = C|\bx|^{-(d+\beta)}.
	\end{equation}
	To estimate the second integral at $\bx=(x_1,\dots,x_d)$, we assume without loss of generality that $|x_1|\ge |\bx|/\sqrt{d}$, and then
	\begin{equation}\label{psibeta}\begin{split}
			\int_{|\xi|> |\bx|^{-1}} & \hat{\psi }(\xi)|\xi|^{\beta} e^{2\pi i \bx\cdot \xi}\rd{\xi} = \frac{1}{2\pi i x_1}\int_{|\xi|> |\bx|^{-1}} \hat{\psi }(\xi)|\xi|^{\beta} \partial_{\xi_1}e^{2\pi i \bx\cdot \xi}\rd{\xi} \\
			= & \frac{1}{2\pi i x_1}\int_{|\xi|= |\bx|^{-1}} \hat{\psi }(\xi)|\xi|^{\beta} e^{2\pi i \bx\cdot \xi}\bn(\xi)\cdot \vec{e}_1\rd{S(\xi)} -  \frac{1}{2\pi i x_1}\int_{|\xi|> |\bx|^{-1}} \partial_{\xi_1}(\hat{\psi }(\xi)|\xi|^{\beta} )e^{2\pi i \bx\cdot \xi}\rd{\xi} \\
			= & \cdots = \sum_{j=0}^d \frac{(-1)^j}{(2\pi i x_1)^{j+1}}\int_{|\xi|= |\bx|^{-1}} \partial_{\xi_1}^{j}(\hat{\psi }(\xi)|\xi|^{\beta}) e^{2\pi i \bx\cdot \xi}\bn(\xi)\cdot \vec{e}_1\rd{S(\xi)} \\
			& + \frac{(-1)^{d+1}}{(2\pi i x_1)^{d+1}}\int_{|\xi|> |\bx|^{-1}} \partial_{\xi_1}^{d+1}(\hat{\psi }(\xi)|\xi|^{\beta} )e^{2\pi i \bx\cdot \xi}\rd{\xi}.
	\end{split}\end{equation}
	where we use integration by parts $d+1$ times, and notice that there is no contribution from the boundary terms at infinity due to the fast decay of $ \hat{\psi}(\xi)|\xi|^\beta$ and its derivatives. 
	
	To estimate the RHS integrals in \eqref{psibeta}, we first notice that for any $0<|\xi|\le 1$,
	\begin{equation}\label{dpsixi}
		\big|\partial_{\xi_1}^j(\hat{\psi }(\xi)|\xi|^{\beta})\big| \le \sum_{k=0}^j {j\choose k} \big|\partial_{\xi_1}^{j-k}\hat{\psi }(\xi)\big|\cdot \big|\partial_{\xi_1}^k|\xi|^{\beta}\big| \le C\sum_{k=0}^j |\xi|^{\beta-k} \le C|\xi|^{\beta-j}.
	\end{equation}
	Therefore, for any $0\le j \le d$,
	\begin{equation}\begin{split}
			& \left|\frac{(-1)^j}{(2\pi i x_1)^{j+1}}\int_{|\xi|= |\bx|^{-1}} \partial_{\xi_1}^j(\hat{\psi }(\xi)|\xi|^{\beta}) e^{2\pi i \bx\cdot \xi}\bn(\xi)\cdot \vec{e}_1\rd{S(\xi)}\right| \\
			\le & C|\bx|^{-(j+1)}\cdot |\bx|^{-(\beta-j)}\cdot |\bx|^{-(d-1)} = C|\bx|^{-(d+\beta)}.
	\end{split}\end{equation}
	where we used \eqref{dpsixi} with $|\xi|=|\bx|^{-1}\le 1$.  Also, due to the fast decay of $ \partial_{\xi_1}^{d+1}(\hat{\psi }(\xi)|\xi|^{\beta} )$ at infinity, we may take a large $m$ and apply \eqref{dpsixi} again to get
	\begin{equation}\begin{split}
			& \left|\frac{(-1)^{d+1}}{(2\pi i x_1)^{d+1}}\int_{|\xi|> |\bx|^{-1}} \partial_{\xi_1}^{d+1}(\hat{\psi }(\xi)|\xi|^{\beta} )e^{2\pi i \bx\cdot \xi}\rd{\xi}\right| \\
			\le & C|\bx|^{-(d+1)}\int_{ |\bx|^{-1} < |\xi| \le 1} |\xi|^{\beta-d-1}\rd{\xi} + C|\bx|^{-(d+1)}\int_{ |\xi| > 1} |\xi|^{-m}\rd{\xi} \\
			\le & C|\bx|^{-(d+1)}(|\bx|^{-(\beta-1)} + 1) \le C|\bx|^{-(d+\beta)} + C|\bx|^{-(d+1)}.
	\end{split}\end{equation}
	Therefore we conclude \eqref{udecay}.
	
	Next we claim that
	\begin{equation}\label{claim_uT}
		u_\epsilon(\bx) = \epsilon^{-d-\beta} \sum_{\bj\in\mathbb{Z}^d}u(\frac{\bx-\bj}{\epsilon}),
	\end{equation}
	which would finish the proof since it implies
	\begin{equation}\begin{split}
			\|u_\epsilon\|_{L^1(\mathbb{T}^d)} = & \epsilon^{-d-\beta} \left\|\sum_{\bj\in\mathbb{Z}^d}u(\frac{\cdot-\bj}{\epsilon})\right\|_{L^1([-1/2,1/2)^d)} \le\epsilon^{-d-\beta} \sum_{\bj\in\mathbb{Z}^d}\left\|u(\frac{\cdot-\bj}{\epsilon})\right\|_{L^1([-1/2,1/2)^d)}  \\
			= & \epsilon^{-d-\beta} \|u(\cdot/\epsilon)\|_{L^1(\mathbb{R}^d)} = \epsilon^{-\beta} \|u\|_{L^1(\mathbb{R}^d)}.
	\end{split}\end{equation}
	Using \eqref{claim_uT}, \eqref{udecay} and $u\in L^\infty$, it is also clear that $\|u_\epsilon\|_{L^\infty} \le C\epsilon^{-d-\beta}$ for $0<\epsilon\le 1/2$. Therefore, by interpolation,
	\begin{equation}
		\|u_\epsilon\|_{L^q} \le \|u_\epsilon\|_{L^1}^{1/q} \|u_\epsilon\|_{L^\infty}^{1/p} \le C\epsilon^{-\beta - d/p}.
	\end{equation}

	To see \eqref{claim_uT}, we first recall the Poisson summation formula: let $\Phi(\bx) = \sum_{\bj\in\mathbb{Z}^d}\delta(\bx-\bj)$, then $\hat{\Phi}(\xi) = \sum_{\bj\in\mathbb{Z}^d}\delta(\xi-\bj)$. Denote $\tilde{u}_\epsilon$ as the periodic extension of $u_\epsilon$ to $\mathbb{R}^d$, then
	\begin{equation}
		\hat{\tilde{u}}_\epsilon(\xi) = \sum_{\bj\in\mathbb{Z}^d} \hat{u}_\epsilon(\bj)\delta(\xi-\bj)= \sum_{\bj\in\mathbb{Z}^d} \hat{\psi}(\epsilon \bj)|\bj|^{\beta}\delta(\xi-\bj) = \epsilon^{-\beta}\sum_{\bj\in\mathbb{Z}^d}\hat{u}(\epsilon \bj)\delta(\xi-\bj),
	\end{equation}
	i.e.,
	\begin{equation}
		\hat{\tilde{u}}_\epsilon(\xi) = \epsilon^{-\beta}\hat{u}(\epsilon \xi)\hat{\Phi}(\xi).
	\end{equation}
	Therefore
	\begin{equation}
		\tilde{u}_\epsilon(\bx) = \epsilon^{-\beta}\cF^{-1}(\hat{u}(\epsilon \cdot)) * \Phi= \epsilon^{-d-\beta}u(\cdot/\epsilon) * \Phi = \epsilon^{-d-\beta} \sum_{\bj\in\mathbb{Z}^d}u(\frac{\bx-\bj}{\epsilon}).
	\end{equation}
\end{proof}

%
%$\|W*\rho\|_{L^p}$ one can obtain from Theorem \ref{thm1} is $d_\infty(\rho,1) \le C \|W_s*\rho\|_{L^d}^{1/(d+1-s)} \le C \|W_s*\rho\|_{L^p}^{1/(d+1-s)}$. 
%Compared to \eqref{thm2_1}, the powers do not coincide, and we do not know whether this estimate is sharp in terms of the scaling. 

\section{Proof of Theorem \ref{thm1}, the case $d=1$}\label{sec_d1}

In this section we prove Theorem \ref{thm1} in the case $d=1$. We recall from \eqref{D1} that $d_\infty(\rho,1)  = \frac{1}{2}\cD[\rho]$. Therefore, instead of taking a general $S$ which approximately achieves \eqref{lem_dinfty_1}, we may take a closed interval $I$ with $\int_I (\rho-1)\rd{\bx} = 2d_\infty(\rho,1)$. The function $\chi_{I_r}$ (for $r>0$) is much easier to deal with than a general $\chi_{S_r}$, and this allows us to gain improvement in the 1D case.

Then we need to enlarge the range of parameter in Lemma \ref{lem_u} for 1D.
\begin{lemma}\label{lem_u1}
	Let $d=1$, $1\le q \le \infty$, $0<\epsilon< 1/2$, $-1<\beta<0$, and
	\begin{equation}
		u_\epsilon:= \cF^{-1}\Big(\hat{\psi}(\epsilon k)|k|^\beta\Big),
	\end{equation}
	be a function on $\mathbb{T}$, with $\hat{u}_\epsilon(0)=0$. There holds
	\begin{equation}
		\|u_\epsilon\|_{L^q} \le C\left\{\begin{split}
			& \epsilon^{-\beta-1/p},\quad -\frac{1}{p}<\beta<0 \\
			& (1+|\log \epsilon|)^{1/q},\quad \beta=-\frac{1}{p}  \\
			& 1,\quad -1<\beta<-\frac{1}{p} 
		\end{split}\right.
	\end{equation}
	with $C$ independent of $\epsilon$ and $1/p+1/q=1$.
\end{lemma}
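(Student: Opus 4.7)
The plan is to reduce the problem to a pointwise analysis of $u_\epsilon = \psi_\epsilon * W_s$ on $\mathbb{T}$, where I set $s := \beta + 1 \in (0,1)$ so that $\hat{W}_s(k) = |k|^{s-1} = |k|^{\beta}$ for $k \ne 0$ and $\hat{u}_\epsilon(0)=0$ matches $\hat{W}_s(0)=0$. Item 2 of Lemma \ref{lem_period} is expected to supply the pointwise bound $|W_s(x)| \le C|x|^{-s} + C$ on $\mathbb{T} \cong [-1/2,1/2)$, since $0<s<1$ places us squarely in the singular Riesz regime.

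First I would derive two pointwise estimates on $u_\epsilon$. For $|x| \le 2\epsilon$, using $\|\psi_\epsilon\|_{L^\infty} \le C\epsilon^{-1}$ together with the local integrability of $|y|^{-s}$ near the origin (valid since $s<1$), I obtain $|u_\epsilon(x)| \le \|\psi_\epsilon\|_{L^\infty}\int_{B(x;\epsilon)}(C|y|^{-s}+C)\rd{y} \le C\epsilon^{-s}$. For $|x| > 2\epsilon$, the mollifier $\psi_\epsilon$ is supported in $B(0;\epsilon)$, so $|x-y|\ge |x|/2$ on its support, and hence $|u_\epsilon(x)| \le C|x|^{-s} + C$.

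Next I would split the spatial integral for $\|u_\epsilon\|_{L^q}^q$ at $|x|=2\epsilon$. The inner piece contributes at most $C\epsilon\cdot \epsilon^{-qs} = C\epsilon^{1-qs}$, while the outer piece is bounded by $C\int_{2\epsilon}^{1/2}x^{-qs}\rd{x}+C$. The three cases of the lemma then correspond exactly to comparing $qs$ with $1$, equivalently $\beta$ with $-1/p$. When $\beta > -1/p$ (so $qs>1$), the outer integral behaves like $\epsilon^{1-qs}$ and combines with the inner contribution to give $\|u_\epsilon\|_{L^q} \lesssim \epsilon^{(1-qs)/q} = \epsilon^{1/q-s} = \epsilon^{-\beta-1/p}$. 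When $\beta < -1/p$, the outer integral is $O(1)$ while the inner piece $\epsilon^{1-qs}$ vanishes as $\epsilon \to 0^+$, giving $\|u_\epsilon\|_{L^q} \lesssim 1$. When $\beta = -1/p$, the outer integral contributes $|\log\epsilon|$ while the inner piece is $O(1)$, yielding $\|u_\epsilon\|_{L^q} \lesssim (1+|\log\epsilon|)^{1/q}$. The endpoint $q=\infty$ is handled immediately by the pointwise bound $|u_\epsilon(x)|\le C\epsilon^{-s}$.

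The main difficulty is securing the right pointwise description of $W_s$ near the origin, which is precisely where Lemma \ref{lem_period} does the heavy lifting. It is worth noting that a direct interpolation between the crude bounds $\|u_\epsilon\|_{L^1}\lesssim 1$ and $\|u_\epsilon\|_{L^\infty}\lesssim \epsilon^{-s}$ only yields $\|u_\epsilon\|_{L^q}\lesssim \epsilon^{-s/p}$, strictly weaker than the target $\epsilon^{-\beta-1/p}$ by a factor of $\epsilon^{\beta/q}$; the case-by-case splitting of the spatial integral is therefore essential. An alternative route via the $\mathbb{R}$ Poisson summation argument used in Lemma \ref{lem_u} looks less appealing here, since for $\beta<0$ the corresponding whole-space function decays only like $|x|^{-1-\beta}$ and so fails to lie in $L^1(\mathbb{R})$, which breaks the clean convergence of the periodization sum.
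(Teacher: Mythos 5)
Your proposal is correct and follows essentially the same route as the paper's own proof: both set $s=1+\beta\in(0,1)$, identify $u_\epsilon=\psi_\epsilon * W_s$, invoke item 2 of Lemma \ref{lem_period} for the singularity profile $|W_s(x)|\lesssim |x|^{-s}+1$, deduce the two-regime pointwise bound $|u_\epsilon(x)|\lesssim\epsilon^{-s}$ near the origin and $|u_\epsilon(x)|\lesssim |x|^{-s}+1$ away from it, and then split the $L^q$ integral at a cutoff of order $\epsilon$ and run the same trichotomy comparing $sq$ with $1$. The cutoff at $2\epsilon$ versus $\epsilon$ and the use of the direct support argument for $\psi_\epsilon$ in place of the paper's averaged estimate are cosmetic differences, and your remark that the crude $L^1$--$L^\infty$ interpolation loses a factor $\epsilon^{\beta/q}$ is a correct and useful side observation.
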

%{\color{red} this lemma might also be true for multiD, but we only need 1D.}

\begin{proof}
	By Lemma \ref{lem_period}, the periodized 1D Riesz kernel $W_s$ with $0<s<1$ is smooth on $\mathbb{T}\backslash \{0\}$, and differs from $c|x|^{-s}$ by a smooth function (identifying $\mathbb{T}=[-1/2,1/2)$) near 0. Therefore, we have
	\begin{equation}
		\frac{1}{2r}\int_{B(x;r)}|W_s(y)|\rd{y} \le C\max\{W_s(x),1\},\quad \frac{1}{2r}\int_{B(x;r)}|W_s(y)|\rd{y} \le Cr^{-s},
	\end{equation}
	for any $x\in\mathbb{T}$ and $0<r\le 1/2$, since the same property is clearly true for $y\mapsto |y|^{-s}$ near 0. Therefore
	\begin{equation}
		u_\epsilon= \psi_\epsilon*W_{s},\quad s=1+\beta \in (0,1),
	\end{equation}
	can be estimated by
	\begin{equation}
		|u_\epsilon(x)| \le C\left\{\begin{split}
			& \epsilon^{-s},\quad |x|<\epsilon \\
			& 1+|x|^{-s},\quad \epsilon\le |x| \le \frac{1}{2}
		\end{split}\right.
	\end{equation}
	Therefore
	\begin{equation}
		\|u_\epsilon\|_{L^q} \le C\epsilon^{-s+1/q} + C\left(\int_\epsilon^{1/2} (1+x^{-s q})\rd{x}\right)^{1/q}.
	\end{equation}
	Then we separate into cases:
	\begin{itemize}
		\item If $\beta>-\frac{1}{p} $, then $-s q = -q-\beta q < -q + q/p = -1$. Then
		\begin{equation}
			\int_\epsilon^{1/2} (1+x^{-s q})\rd{x} \le C\epsilon^{-s q +1},
		\end{equation}
		which gives $\|u_\epsilon\|_{L^q} \le C\epsilon^{-s+1/q} = C\epsilon^{-\beta-1/p}$.
		\item If $\beta=-\frac{1}{p} $, then $-s q = -1$. Then
		\begin{equation}
			\int_\epsilon^{1/2} (1+x^{-s q})\rd{x} \le C(1+|\log \epsilon|),
		\end{equation}
		which gives $\|u_\epsilon\|_{L^q} \le C(1+|\log \epsilon|)^{1/q}$.
		\item If $\beta<-\frac{1}{p} $, then $-s q > -1$. Then
		\begin{equation}
			\int_\epsilon^{1/2} (1+x^{-s q})\rd{x} \le C,
		\end{equation}
		which gives $\|u_\epsilon\|_{L^q} \le C$.
	\end{itemize}
	
\end{proof}

\begin{proof}[Proof of Theorem \ref{thm1}, $d=1$, and $1<p<\infty$ case]
	As discussed at the beginning of this section, we may take a closed interval $I$ with $\int_I \rho\rd{x} \ge |I_{3r}|$ with $r=d_\infty(\rho,1)/3$. Then, defining $g = \chi_{I_{r}}*\psi_{r}$, we get 
	\begin{equation}\label{1d_est1}
		\int_{\mathbb{T}}g(x)(\rho(x)-1)\rd{x} \ge 2r,
	\end{equation}
	and
	\begin{equation}\label{1d_est2}
		\int_{\mathbb{T}}g(x)(\rho(x)-1)\rd{x} \le  \Big\|\cF^{-1}\Big(\frac{\hat{\psi}(r\cdot)}{\hat{W}(\cdot)}\hat{\chi}_{I_{r}}(\cdot)\Big)\Big\|_{L^q}\|W*\rho\|_{L^p},
	\end{equation}
	similar to the proof of the $1\le p\le d$ case. Notice that
	\begin{equation}
		\chi_{I_{r}}' = \delta_{x_1}-\delta_{x_2},\quad\text{ where } [x_1,x_2]=I_{r},
	\end{equation}
	and
	\begin{equation}
		\hat{\chi}_{I_{r}}(k) = \frac{1}{2\pi i k}\widehat{\chi'_{I_{r}}}(k),\quad k\ne 0.
	\end{equation}
	Therefore
	\begin{equation}\begin{split}\label{Lq1d}
			\Big\|\cF^{-1}\Big(\frac{\hat{\psi}(r\cdot)}{\hat{W}(\cdot)}\hat{\chi}_{I_{r}}(\cdot)\Big)\Big\|_{L^q} = & \Big \|\cF^{-1}\Big(\frac{\hat{\psi}(r\cdot)}{2\pi i (\cdot)\hat{W}(\cdot)}\Big) * (\delta_{x_1}-\delta_{x_2})\Big\|_{L^q} \\
			\le & C\Big\|\cF^{-1}\Big(\hat{\psi}(r\cdot)|\cdot|^{-s}\sgn(\cdot)\Big) \Big\|_{L^q}\le C\Big\|\cF^{-1}\Big(\hat{\psi}(r\cdot)|\cdot|^{-s}\Big) \Big\|_{L^q},
	\end{split}\end{equation}
	where the last inequality uses the boundedness of the Hilbert transform on $L^q$. Then we apply Lemma \ref{lem_u} in the case $s\le 0$ and Lemma \ref{lem_u1} in the case $0<s<1$ and get
	\begin{equation}
		\Big\|\cF^{-1}\Big(\hat{\psi}(r\cdot)|\cdot|^{-s}\Big) \Big\|_{L^q} \le C\left\{\begin{split}
			& r^{s-1/p},\quad s<\frac{1}{p} \\
			& (1+|\log r|)^{1/q},\quad s=\frac{1}{p}  \\
			& 1,\quad \frac{1}{p} <s<1
		\end{split}\right.
	\end{equation}
	Combined with \eqref{1d_est1} and \eqref{1d_est2}, we get that $r$ is less than the above RHS times $\|W*\rho\|_{L^p}$, which is the conclusion.
	
\end{proof}

\section{Optimality of scaling}\label{sec_opt}
In this section we prove the following theorem, which indicates (iii) in Theorem \ref{thm1}.

\begin{theorem}\label{thm2}
	Let $0<s<d$ and $1\le p \le \infty$. There exists constant $c>0$, $\rho\in\cM$ with $d_\infty(\rho,1)$ arbitrarily small, such that
	\begin{equation}\label{thm2_1}
		d_\infty(\rho,1)\ge c \|W_s*\rho\|_{L^p}^{\min\{\gamma,1\}},\quad \gamma = \frac{1}{d+d/p-s}.
	\end{equation}
\end{theorem}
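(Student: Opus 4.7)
The plan is to exhibit an explicit parameterized family $\rho_r \in \cM(\bT^d)$ for which $d_\infty(\rho_r,1)\sim r$ is transparent and $\|W_s*\rho_r\|_{L^p}$ can be computed directly using the near-origin structure of $W_s$ provided by Lemma \ref{lem_period}. The primary candidate is a dipole: for disjoint balls $B(\bx_1;r),B(\bx_2;r)\subset\bT^d$ with $|\bx_1-\bx_2|\sim r$, set
\[
\rho_r=1+\chi_{B(\bx_1;r)}-\chi_{B(\bx_2;r)},
\]
which is manifestly a probability measure taking values in $[0,2]$. The estimate $d_\infty(\rho_r,1)\sim r$ is immediate from Theorem \ref{lem_dinfty}: taking $S=B(\bx_1;r)$ gives $\int_S\rho_r=2|B(0;r)|$ while $|S_{r'}|=|B(\bx_1;r+r')|$, so $\int_S\rho_r>|S_{r'}|$ precisely for $r'$ comparable to $r$; the matching upper bound comes from the obvious rigid transport sending $B(\bx_2;r)$ onto $B(\bx_1;r)$.

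Next I would estimate $\|W_s*\rho_r\|_{L^p}=\|W_s*(\chi_{B(\bx_1;r)}-\chi_{B(\bx_2;r)})\|_{L^p}$ (using $\hat W_s(0)=0$). Lemma \ref{lem_period} decomposes $W_s(\bx)=c_{d,s}|\bx|^{-s}+g(\bx)$ with $g$ smooth on $\bT^d$, reducing matters to the Euclidean Riesz kernel. I expect two contributions: a \emph{near-field} on $|\bx-\bx_i|\lesssim r$ of magnitude $\lesssim r^{d-s}$ (by direct integration of $|\bx|^{-s}$ over a ball of radius $r$), contributing order $r^{p(d-s)+d}$ to $\|\cdot\|_{L^p}^p$; and a \emph{far-field} on $|\bx-\bar\bx|\gtrsim r$ (with $\bar\bx$ the midpoint) of magnitude $\lesssim r^{d+1}|\bx-\bar\bx|^{-s-1}$ (by a first-order Taylor expansion reflecting the dipole moment), contributing an explicit power-law integral. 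Summing, $\|W_s*\rho_r\|_{L^p}\sim r^{d+d/p-s}=r^{1/\gamma}$ in the regime where the near-field dominates, delivering $d_\infty\gtrsim\|W_s*\rho\|^\gamma$ whenever $\gamma\le 1$.

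For the complementary regime $\gamma>1$ (where the theorem asks only for the linear lower bound $d_\infty\gtrsim\|W_s*\rho\|$), the dipole scaling is no longer sharp, and I would instead take a smooth periodic perturbation
\[
\rho_\epsilon=1+\epsilon\cos(2\pi\bk_0\cdot\bx),\quad 0<\epsilon\ll 1,\quad 0\ne\bk_0\in\mathbb{Z}^d.
\]
Since $\cos(2\pi\bk_0\cdot\bx)$ is an eigenfunction of the convolution $W_s*$ with eigenvalue $|\bk_0|^{s-d}$, one has $\|W_s*\rho_\epsilon\|_{L^p}\sim\epsilon$; meanwhile an explicit near-identity transport (a monotone rearrangement in the $\bk_0$-direction) shows $d_\infty(\rho_\epsilon,1)\sim\epsilon$, giving the linear scaling $d_\infty\sim\|W_s*\rho_\epsilon\|_{L^p}$.

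The main technical obstacle is the far-field $L^p$-analysis in the dipole case: in some regimes (notably $d\ge 2$ with small $p$, where $p(s+1)<d$), the raw dipole's far-field contribution dominates the near-field and yields the suboptimal scaling $r^{d+1}$. In such regimes I would replace the dipole by a higher-order radial construction such as $1+\chi_{B(0;r)}-c\chi_{B(0;2r)\setminus B(0;r)}$ (or a discrete $2n$-pole) with additional vanishing moments, pushing the far-field below the near-field and restoring the sharp exponent $d+d/p-s$. The logarithmic exception at $d=1,\,s=1/p$ reflects the borderline of a critical power-law $L^p$-integral in this same comparison.
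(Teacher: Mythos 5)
Your overall strategy is the same as the paper's: a simple low-order perturbation handles the linear regime $\min\{\gamma,1\}=1$, and a localized profile with many vanishing moments handles the power regime, with the key point being that high moment cancellation forces $W_s*\rho$ to decay fast enough in the far-field that the near-field contribution $r^{d-s}$ on a set of volume $\sim r^d$ dominates the $L^p$-norm. You correctly identify the two core issues (the far-field dipole tail can dominate when $p(s+1)<d$, and the fix is higher moment cancellation), and your $d_\infty$ lower bound via Theorem \ref{lem_dinfty} is exactly the paper's device. The one concrete weakness is your proposed replacement $1+\chi_{B(0;r)}-c\,\chi_{B(0;2r)\setminus B(0;r)}$: by radial symmetry it cancels only the zeroth and first moments, giving far-field decay $\sim r^{d+2}|\bx|^{-s-2}$, which is still insufficient whenever $p(s+2)<d$ (e.g.\ $d=5$, $p=1$, $s=1$). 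In general the required number of vanishing moments grows without bound as $d$ grows and $p$ shrinks, so one needs a construction that cancels moments of arbitrary order $2M-1$ with $M=M(d,p,s)$; your passing mention of a ``discrete $2n$-pole'' gestures at this but leaves it unspecified. The paper gets this cleanly and uniformly by taking $\Psi_M=(-\Delta)^M\psi$ (Lemma \ref{lem_M}): since $\hat{\Psi}_M(\xi)=(2\pi)^{2M}|\xi|^{2M}\hat\psi(\xi)$ vanishes to order $2M$ at $\xi=0$, all moments up to degree $2M-1$ vanish automatically, and the far-field estimate $|(W_s*\rho)(\bx)|\lesssim \epsilon^{d+2M}|\bx|^{-s-2M}$ follows by one Taylor-expansion lemma for any desired $M$. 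If you upgrade your multipole step to an explicit arbitrary-order construction (or simply adopt $(-\Delta)^M\psi$), your argument closes.
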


We first construct a microscopic profile by taking derivatives on the mollifier $\psi$ defined in Section \ref{sec_1pd}.
\begin{lemma}\label{lem_M}
	Let $M\in\mathbb{N}$, and define 
	\begin{equation}
		\Psi_M = (-\Delta)^M \psi, \quad \Psi_{M,\epsilon}(\bx) = \frac{1}{\epsilon^d} \Psi_M(\frac{\bx}{\epsilon}),
	\end{equation}
	as functions on $\mathbb{R}^d$. Then for any smooth function $f$ defined on $B(0;\epsilon)$,
	\begin{equation}\label{lem_M_1}
		\int_{B(0;\epsilon)}f(\bx)\Psi_{M,\epsilon}(\bx) \rd{\bx} \le C\epsilon^{2M}\|f\|_{W^{2M,\infty}(B(0;\epsilon))},
	\end{equation}
	where $C$ depends on $d,M,\psi$.
\end{lemma}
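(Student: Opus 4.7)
The plan is to reduce the estimate to a bound on the rescaled reference profile on $B(0;1)$ and then use integration by parts to move the $2M$ derivatives off of $\psi$ and onto $f$.

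First, I would perform the change of variables $\bx = \epsilon\by$. Since $\Psi_{M,\epsilon}(\bx) = \epsilon^{-d}\Psi_M(\bx/\epsilon)$ and $|\det(\epsilon I)| = \epsilon^d$, this gives
\begin{equation}
\int_{B(0;\epsilon)} f(\bx)\Psi_{M,\epsilon}(\bx)\rd{\bx} = \int_{B(0;1)} f(\epsilon\by)\Psi_M(\by)\rd{\by}.
\end{equation}
This rescaling is crucial because it moves the $\epsilon$-dependence from the (singular-looking) kernel into the argument of $f$, where differentiation produces a clean factor of $\epsilon$.

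Next I would integrate by parts $M$ times. Because $\psi$ is smooth and supported in $B(0;1)$, all derivatives of $\psi$ vanish on $\partial B(0;1)$, so there are no boundary contributions. Using $\Psi_M = (-\Delta)^M\psi$ and the self-adjointness of $-\Delta$, I get
\begin{equation}
\int_{B(0;1)} f(\epsilon\by)(-\Delta)^M\psi(\by)\rd{\by} = \int_{B(0;1)} (-\Delta_{\by})^M\bigl[f(\epsilon\by)\bigr]\,\psi(\by)\rd{\by}.
\end{equation}
By the chain rule, $(-\Delta_{\by})^M[f(\epsilon\by)] = \epsilon^{2M}\bigl[(-\Delta)^M f\bigr](\epsilon\by)$, which is exactly where the desired factor $\epsilon^{2M}$ comes from.

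Finally I would estimate the resulting integral crudely in $L^\infty \times L^1$: since $(-\Delta)^M f$ is a linear combination of partial derivatives of $f$ of order $2M$, we have $\|(-\Delta)^M f\|_{L^\infty(B(0;\epsilon))} \le C_{d,M}\|f\|_{W^{2M,\infty}(B(0;\epsilon))}$, and $\|\psi\|_{L^1(B(0;1))} = 1$. Combining,
\begin{equation}
\left|\int_{B(0;\epsilon)} f\,\Psi_{M,\epsilon}\rd{\bx}\right| \le \epsilon^{2M}\|(-\Delta)^M f\|_{L^\infty(B(0;\epsilon))} \|\psi\|_{L^1} \le C\epsilon^{2M}\|f\|_{W^{2M,\infty}(B(0;\epsilon))},
\end{equation}
which is the claim. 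There is no real obstacle here: the compact support of $\psi$ kills the boundary terms and the rescaling automatically produces the $\epsilon^{2M}$ gain, so the whole proof is essentially a single integration by parts after a dilation.
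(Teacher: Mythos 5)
Your proof is correct, but it takes a genuinely different route from the paper. The paper first observes via the Fourier transform ($\hat{\Psi}_M(\xi) = (2\pi)^{2M}|\xi|^{2M}\hat{\psi}(\xi)$, hence $\partial_\xi^\alpha\hat{\Psi}_M(0)=0$ for $|\alpha|\le 2M-1$) that $\Psi_{M,\epsilon}$ annihilates all polynomials of degree $\le 2M-1$, then writes $f = T_{2M-1}[f] + R$ with the Taylor remainder $R$ satisfying $|R(\bx)|\le C|\bx|^{2M}\|f\|_{W^{2M,\infty}}$, and bounds the remainder against $\|\Psi_{M,\epsilon}\|_{L^1} = \|\Psi_M\|_{L^1}$. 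You instead rescale to $B(0;1)$, use self-adjointness of $(-\Delta)^M$ and the compact support of $\psi$ inside the open ball to transfer all $2M$ derivatives onto $f(\epsilon\,\cdot)$ without boundary contributions, and read off the $\epsilon^{2M}$ factor from the chain rule. Your argument is a bit more direct and avoids both the Fourier computation and the explicit Taylor decomposition; it does, however, lean on the specific structure $\Psi_M = (-\Delta)^M\psi$. The paper's moment-vanishing formulation would also cover any compactly supported kernel whose first $2M-1$ moments vanish, at the price of the Fourier verification; in the present setting the two proofs have essentially the same length and both are valid.
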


We will see in \eqref{eqM} that taking a multiple Laplacian on $\psi$ and use it in the construction of $\rho$ will make sure that $W*\rho$ concentrates near 0.

\begin{proof}
	We first claim that 
	\begin{equation}\label{claim_M}
		\int_{B(0;\epsilon)}f(\bx)\Psi_{M,\epsilon}(\bx) \rd{\bx}=0,
	\end{equation}
	if $f$ is a polynomial of degree no more than $2M-1$. To see this, we may assume $\epsilon=1$ and $f$ is a monomial $\bx^\alpha$ with the multi-index $|\alpha|\le 2M-1$ without loss of generality. Then $f \Psi_M$ is a  smooth function compactly supported in $B(0;1)$, whose integral is given by
	\begin{equation}
		\int_{B(0;1)}f(\bx)\Psi_{M}(\bx) \rd{\bx} = \int_{\mathbb{R}^d}\bx^\alpha\Psi_{M}(\bx) \rd{\bx} = \cF((\cdot)^\alpha\Psi_{M})(0) = \frac{1}{(-2\pi i)^{|\alpha|}} (\partial_\xi^\alpha \hat{\Psi}_M)(0).
	\end{equation}
	Notice that $\hat{\Psi}_M(\xi) = (2\pi)^{2M}|\xi|^{2M}\hat{\psi}(\xi)$ with $\hat{\psi}$ being a smooth function. Therefore $(\partial_\xi^\alpha \hat{\Psi}_M)(0)=0$ and the claim follows.
	
	To show Lemma \ref{lem_M_1}, we use Taylor expansion for $f$ at 0 to get
	\begin{equation}
		|f(\bx) - T_{2M-1}[f](\bx) | \le C|\bx|^{2M}\|f\|_{W^{2M,\infty}(B(0;\epsilon))}\le C\epsilon^{2M}\|f\|_{W^{2M,\infty}(B(0;\epsilon))},
	\end{equation}
	for any $|\bx|<\epsilon$, where $T_{2M-1}[f]$ is the Taylor polynomial of degree $2M-1$ of $f$. By \eqref{claim_M}, we get
	\begin{equation}\begin{split}
			\left|\int_{B(0;\epsilon)}f(\bx)\Psi_{M,\epsilon}(\bx) \rd{\bx}\right| = & \left|\int_{B(0;\epsilon)}(f(\bx)-T_{2M-1}[f](\bx))\Psi_{M,\epsilon}(\bx) \rd{\bx}\right| \\
			\le & C\epsilon^{2M}\|f\|_{W^{2M,\infty}(B(0;\epsilon))}\int_{B(0;\epsilon)}|\Psi_{M,\epsilon}(\bx) |\rd{\bx} \le  C\epsilon^{2M}\|f\|_{W^{2M,\infty}(B(0;\epsilon))}.
	\end{split}\end{equation}

\end{proof}

%We first need a lemma on the potential generated by a small disc. In this section we will identify $\mathbb{T}^d$ with $[-1/2,1/2)^d$.
%\begin{lemma}\label{lem_ave}
%Let $\alpha>-d$, $1\le p \le \infty$, and $\epsilon>0$ sufficiently small. Then
%\begin{equation}\label{lem_ave_1}
%\Big|(W_s*\frac{1}{|B(0;\epsilon)|}\chi_{B(0;\epsilon)})(\bx)\Big| \le C\epsilon^{-s},\quad \forall \bx\in\mathbb{T}^d
%\end{equation}
%and
%\begin{equation}\label{lem_ave_2}
%\Big|(W_s*\frac{1}{|B(0;\epsilon)|}\chi_{B(0;\epsilon)})(\bx) - W_s(\bx)\Big| \le C\epsilon^2|\bx|^{-s-2},\quad \forall |\bx|>2\epsilon
%\end{equation}
%\end{lemma}
%
%\begin{proof}
%\eqref{lem_ave_1} is a direct consequence of Lemma \ref{lem_period} and the same property of the potential $|\bx|^{-s}$ near 0.
%
%To prove \eqref{lem_ave_2}, we notice that
%\begin{equation}
%(W_s*\frac{1}{|B(0;\epsilon)|}\chi_{B(0;\epsilon)})(\bx) - W_s(\bx) = \frac{1}{|B(0;\epsilon)|}\int_{B(\bx;\epsilon)}(W_s(\by)-W_s(\bx))\rd{\by}
%\end{equation}
%Taylor expansion gives
%\begin{equation}
%|W_s(\by)-W_s(\bx)-\nabla W_s(\bx)\cdot (\by-\bx)| \le \frac{1}{2}|\by-\bx|^2\sup_{\bz\in B(\bx;\epsilon)}|\nabla^2 W_s(\bz)| \le C\epsilon^2 |\bx|^{-s-2}
%\end{equation}
%for any $|\bx|>2\epsilon$, by Lemma \ref{lem_period}. Since $\int_{B(\bx;\epsilon)}\nabla W_s(\bx)\cdot (\by-\bx)\rd{\by}=0$, we get the \eqref{lem_ave_2}.
%\end{proof}
%

\begin{proof}[Proof of Theorem \ref{thm2}]
	
	It is easy to see the existence of $\rho\in \cM$ with $d_\infty(\rho,1)$ arbitrarily small, such that $d_\infty(\rho,1) \ge c\|W_s*\rho\|_{L^p}$. In fact, one can take $\rho=1- \epsilon |B(0;1/3)| +\epsilon \chi_{B(0;1/3)} $ for small $\epsilon>0$, and notice that $d_\infty(\rho,1)\ge c\epsilon$ (by Theorem \ref{lem_dinfty} with $S=B(0;1/3)$) and $\|W_s*\rho\|_{L^p}=C\epsilon$.
	
	Then we deal with the power $d+d/p-s$ in \eqref{thm2_1}. Let $M\in\mathbb{N}$ to be determined. For any small $\epsilon>0$, the function $\Psi_{M,\epsilon}$ in Lemma \ref{lem_M} can be viewed as a function on $\mathbb{T}^d$, and we construct
	\begin{equation}\label{rhocons}
		\rho=1 + c_0\epsilon^d\Psi_{M,\epsilon},\quad c_0 = \frac{1}{\|\Psi_M\|_{L^\infty}},
	\end{equation}
	which is clearly a probability measure on $\mathbb{T}^d$ with $\|\rho-1\|_{L^\infty}\le 1$ and $\supp(\rho-1) \subseteq B(0;\epsilon)$. It is clear that $d_\infty(\rho,1)\ge c\epsilon$ by Theorem \ref{lem_dinfty} with $S=B(0;\epsilon R)$ or its complement for some fixed $R\in (0,1)$ with $\int_{B(0;R)}\Psi_M\rd{\bx}\ne 0$. 
	
	Next we analyze $W_s*\rho$. If $|\bx|<2\epsilon$, then Lemma \ref{lem_period} and the fact $\|\rho\|_{L^\infty}\le 2$ show that
	\begin{equation}
		|(W_s*\rho)(\bx)| \le 2\int_{B(\bx;\epsilon)}W_s(\by)\rd{\by} \le C\epsilon^{d-s}.
	\end{equation}
	If $\bx\in [-1/2,1/2)^d$ with $|\bx|\ge 2\epsilon$ , then Lemmas \ref{lem_period} and \ref{lem_M} shows that
	\begin{equation}\label{eqM}
		|(W_s*\rho)(\bx)| \le C\epsilon^{2M+d} \|W_s\|_{W^{2M,\infty}(B(\bx;\epsilon))} \le C\epsilon^{d+2M}\cdot |\bx|^{-s-2M}.
	\end{equation}
	Therefore
	\begin{equation}
		\int_{|\bx|\le 2\epsilon}|(W_s*\rho)(\bx)|^p\rd{\bx} \le C\epsilon^{d+p(d-s)},
	\end{equation}
	and
	\begin{equation}
		\begin{split}
		&\int_{|\bx|> 2\epsilon}|(W_s*\rho)(\bx)|^p\rd{\bx} \le C\epsilon^{p(d+2M)}\int_{2\epsilon}^{\sqrt{d}} r^{p(-s-2M)+d-1}\rd{r}\\
		 \le &C\epsilon^{p(d+2M)}\cdot \epsilon^{p(-s-2M)+d} = C\epsilon^{d+p(d-s)},
		\end{split}
	\end{equation}
	by taking $M$ sufficiently large so that $p(-s-2M)+d-1<-1$. Therefore
	\begin{equation}
		\|W_s*\rho\|_{L^p} \le C\epsilon^{d+d/p-s}.
	\end{equation}
	This proves the conclusion with the power $d+d/p-s$, and finishes the proof.
	
\end{proof}

\section{Acknowledgement}
The first author was supported in part by NSF and ONR grants DMS1613911 and N00014-1812465. The first author was supported by the Advanced Grant Nonlocal-CPD (Nonlocal PDEs for Complex Particle Dynamics: Phase Transitions, Patterns and Synchronization) of the European Research Council Executive Agency (ERC) under the European Union's Horizon 2020 research and innovation programme (grant agreement No. 883363). The authors would like to thank Stefan Steinerberger for helpful comments on a previous draft.

\section{Appendix: Proof of Lemma \ref{lem_iso}}

\begin{lemma}[Loomis-Whitney inequality \cite{LW}]\label{lem_LW}
	Let $S\subseteq \mathbb{T}^d$ with $d>1$. Then 
	\begin{equation}
		|S|^{d-1} \le \prod_{j=1}^d |\pi_j(S)|,
	\end{equation}
	where $\pi_j$ is the projection onto the $j$-th coordinate hyperplane.
\end{lemma}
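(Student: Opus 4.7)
The plan is to reduce the toroidal Loomis-Whitney inequality to its classical Euclidean counterpart and then prove the Euclidean version by induction on $d$. First, I would identify $\mathbb{T}^d$ with the fundamental domain $[0,1)^d \subset \mathbb{R}^d$; under this identification the Haar measure on the torus coincides with the Lebesgue measure on the cube, and the coordinate projection $\pi_j : \mathbb{T}^d \to \mathbb{T}^{d-1}$ becomes the Euclidean projection $[0,1)^d \to [0,1)^{d-1}$ that drops the $j$-th coordinate. Crucially, for any measurable $S \subseteq \mathbb{T}^d$ both $|S|$ and $|\pi_j(S)|$ agree with the Euclidean measures of the corresponding subset and its projection. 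Hence it suffices to prove $|S|^{d-1} \le \prod_{j=1}^{d} |\pi_j(S)|$ for any bounded measurable $S \subseteq \mathbb{R}^d$.

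For the Euclidean statement, I would proceed by induction on $d$. The base case $d=2$ is immediate since $S \subseteq \pi_2(S) \times \pi_1(S)$, so Fubini gives $|S| \le |\pi_1(S)|\,|\pi_2(S)|$. For the inductive step, slice $S$ along the last coordinate: set $S_t = \{\hat{x}_d : (\hat{x}_d,t) \in S\} \subseteq \mathbb{R}^{d-1}$, so that $|S| = \int |S_t|\,dt$. Applying the inductive hypothesis to each slice yields $|S_t|^{d-2} \le \prod_{j=1}^{d-1} |\pi_j^{(d-1)}(S_t)|$. Together with the bound $|S_t| \le |\pi_d(S)|$ (since $S_t \subseteq \pi_d(S)$) and the identification $\pi_j^{(d-1)}(S_t) = (\pi_j(S))_t$ for $j < d$, this gives the pointwise-in-$t$ estimate
\begin{equation*}
|S_t| \le |\pi_d(S)|^{1/(d-1)} \prod_{j=1}^{d-1} \bigl|(\pi_j(S))_t\bigr|^{1/(d-1)}.
\end{equation*}
Integrating in $t$ and applying H\"older's inequality to the product of $d-1$ factors with exponent $d-1$ each produces the desired inequality $|S|^{d-1} \le \prod_{j=1}^{d} |\pi_j(S)|$.

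The main obstacle is simply bookkeeping of exponents in the final H\"older step and the correct slice identification $\pi_j^{(d-1)}(S_t) = (\pi_j(S))_t$ for $j < d$. Both are routine once the induction is set up, and neither invokes anything beyond Fubini and H\"older. Since this is the classical Loomis-Whitney inequality cited as \cite{LW}, I would not reproduce the full argument in the paper but merely invoke the reduction to the Euclidean case.
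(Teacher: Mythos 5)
The paper does not prove this lemma; it simply cites the Loomis--Whitney inequality as \cite{LW} and uses it as a black box in the proof of Lemma \ref{lem_iso}, so there is no in-paper proof to compare against. Your argument is the standard inductive proof and it is correct. The reduction to $[0,1)^d$ is sound because the identification of $\mathbb{T}^d$ with the fundamental domain is measure-preserving and commutes with the coordinate projections, so $|S|$ and $|\pi_j(S)|$ are unchanged. The base case $d=2$ is just the containment $S\subseteq \pi_2(S)\times\pi_1(S)$ and the product formula for the measure of a rectangle. In the inductive step, the slice identity $\pi_j^{(d-1)}(S_t)=(\pi_j(S))_t$ does hold as an equality (both sides describe those $(x_1,\dots,\hat{x}_j,\dots,x_{d-1})$ for which some $x_j$ makes $(x_1,\dots,x_{d-1},t)\in S$), and combining the inductive bound $|S_t|^{d-2}\le\prod_{j=1}^{d-1}|(\pi_j(S))_t|$ with $|S_t|\le|\pi_d(S)|$ gives
\begin{equation*}
|S_t|\le |\pi_d(S)|^{1/(d-1)}\prod_{j=1}^{d-1}\bigl|(\pi_j(S))_t\bigr|^{1/(d-1)},
\end{equation*}
after which the generalized H\"older inequality with $d-1$ exponents all equal to $d-1$, together with $\int|(\pi_j(S))_t|\,dt=|\pi_j(S)|$ by Fubini, closes the induction. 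Your final remark that the paper would merely invoke the reduction rather than reproduce the argument is also consistent with the paper's treatment, which gives no proof at all.
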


\begin{proof}[Proof of Lemma \ref{lem_iso}]
	The 1D case is trivial. In the rest of this proof, we will assume $d\ge 2$.
	
	Denote $\epsilon=|S_r\backslash S|$. 
	It is clear that we may assume 
	%$r<0.01$, 
	$r< \frac{0.1}{2(d+0.1)}$
	%, $\epsilon<0.01$ and $\frac{\epsilon}{r}<0.01\cdot 2^{-d}$ 
	without loss of generality. 
	First notice that either $|S|\le 1/2$ or $|S^c|\le 1/2$. We will denote the restriction of $S$ onto lines of coordinate directions as $S|_{1;(x_2,\dots,x_d)}:=\{x_1\in \mathbb{T}:(x_1,x_2,\dots,x_d)\in S\}$ and similarly define $S|_{j;(x_1,\dots,\hat{x}_j,\dots,x_d)}$ with $\hat{\bx}_j=(x_1,\dots,\hat{x}_j,\dots,x_d)\in \mathbb{T}^{d-1}$.
	
	{\bf Case 1}: If $|S|\le 1/2$, then define
	\begin{equation}
		A_j = \{\hat{\bx}_j: S|_{j;\hat{\bx}_j} = \emptyset\},\quad B_j = \Big\{\hat{\bx}_j: \Big|S|_{j;\hat{\bx}_j}\Big| > \frac{d}{d+0.1}\Big\},\quad D_j = \mathbb{T}^{d-1}\backslash (A_j\cup B_j),\quad j=1,\dots,d.
	\end{equation}
	It is clear that $|B_1| \le (1+0.1/d)|S|$. For any $\hat{\bx}_1\in D_1$, we have $|(S|_{1;\hat{\bx}_1})^c| \ge \frac{0.1}{d+0.1} > 2r$, and thus after an $r$-expansion,
	\begin{equation}
		\Big|S_r|_{1;\hat{\bx}_1}\backslash S|_{1;\hat{\bx}_1}\Big| \ge 2r.
	\end{equation}
	Integrating in $\hat{\bx}_1\in D_1$, we see that
	\begin{equation}
		2r |D_1| \le \int_{D_1}\Big|S_r|_{1;\hat{\bx}_1}\backslash S|_{1;\hat{\bx}_1}\Big| \rd{\hat{\bx}_1} \le \int_{\mathbb{T}^{d-1}}\Big|S_r|_{1;\hat{\bx}_1}\backslash S|_{1;\hat{\bx}_1}\Big| \rd{\hat{\bx}_1} = \epsilon,
	\end{equation}
	i.e., $|D_1|\le \frac{\epsilon}{2r}$. Therefore, combined with the same estimates for other $B_j$ and $D_j$ and applying Lemma \ref{lem_LW}, we get
	\begin{equation}\label{Sdeq1}\begin{split}
			|S|^{d-1} \le & \prod_{j=1}^d|B_j\cup D_j| \le \Big((1+0.1/d)|S|+\frac{\epsilon}{2r}\Big)^d .
	\end{split}\end{equation}
	If $|S|>\frac{d}{0.1}\cdot\frac{\epsilon}{2r}$ were true, then we would have $|S|^{d-1} \le (1+0.2/d)^d|S|^d$, i.e., $1\le (1+0.2/d)^d |S|$, contradicting the assumption $|S|\le 1/2$ since $(1+0.2/d)^d \le e^{0.2} < 2$. Therefore we have $|S|\le \frac{d}{0.1}\cdot\frac{\epsilon}{2r}$. Substituting into the RHS of \eqref{Sdeq1}, we obtain $|S|^{d-1}\le C\frac{\epsilon^d}{r^d}$ which is the conclusion.
	
	{\bf Case 2}: If $|S^c|\le 1/2$, then define
	\begin{equation}
		A_j = \{\hat{\bx}_j: S|_{j;\hat{\bx}_j} = \emptyset\},\quad B_j = \Big\{\hat{\bx}_j: \Big|S^c|_{j;\hat{\bx}_j}\Big| \le 2r\Big\},\quad D_j = \mathbb{T}^{d-1}\backslash (A_j\cup B_j),\quad j=1,\dots,d.
	\end{equation}
	It is clear that $|A_1| \le |S^c|$. For any $\hat{\bx}_1\in B_1$, it is clear that $S_r^c|_{1;\hat{\bx}_1}=\emptyset$. Therefore 
	\begin{equation}
		|S^c\cap ( \mathbb{T}_{x_1}\times B_1) | \le \epsilon
	\end{equation}
	where $\mathbb{T}_{x_1}\times B_1$ denotes the set of points with $(x_2,\dots,x_d)$ coordinate in $B_1$ and $x_1$ coordinate in $\mathbb{T}$. For any $\hat{\bx}_1\in D_1$, we have $\Big|S_r|_{1;\hat{\bx}_1}\backslash S|_{1;\hat{\bx}_1}\Big| \ge 2r$, and thus we get $|D_1|\le \frac{\epsilon}{2r}$ as before. Therefore, combined with the same estimates for other $A_j$, $B_j$ and $D_j$,
	\begin{equation}\label{Scest}\begin{split}
			|S^c| \le & \Big|\{\bx\in S^c: (x_1,\dots,\hat{x}_j,\dots,x_d)\in A_j\cup D_j,\,j=1,\dots,d\}\Big| + \bigcup_{j=1}^d |S^c\cap (\mathbb{T}_{x_j}\times B_j )|\\
			\le & \Big(|S^c|+\frac{\epsilon}{r}\Big)^{d/(d-1)} + d\epsilon,
	\end{split}\end{equation}
	where in the second inequality we applied Lemma \ref{lem_LW} to the first term. 
	
	Then notice that $\epsilon = |S_r\backslash S| \ge c_1 r^d$ for some $c_1>0$. In fact, since $S_r^c\ne\emptyset$, we have $S_{\frac{2}{3}r}\backslash S_{\frac{1}{3}r} \ne\emptyset$. Take a point $\bx$ in this set, then it is clear that $B(\bx;\frac{1}{3}r)\subseteq S_r\backslash S$, which gives $|S_r\backslash S| \ge c_1 r^d$ with $c_1 = |B(0;1/3)|$. Therefore \eqref{Scest} gives
	\begin{equation}\begin{split}
			|S^c| \le  \Big(|S^c|+\frac{\epsilon}{r}\Big)^{d/(d-1)} + C_0\Big(\frac{\epsilon}{r}\Big)^{d/(d-1)},\quad C_0 = d c_1^{-1/(d-1)}.
	\end{split}\end{equation}
	
	Finally we prove $|S^c| \le C_1 (\frac{\epsilon}{r})^{d/(d-1)}$ for some $C_1>0$ to be determined.
	
	Suppose the contrary that $|S^c| > C_1 (\frac{\epsilon}{r})^{d/(d-1)}$. Let $C_2>0$ be a constant to be determined.
	\begin{itemize}
		\item If $C_1 (\frac{\epsilon}{r})^{d/(d-1)} \le |S^c| \le C_2\frac{\epsilon}{r}$, then
		\begin{equation}
			\Big(|S^c|+\frac{\epsilon}{r}\Big)^{d/(d-1)} + C_0\Big(\frac{\epsilon}{r}\Big)^{d/(d-1)} \le \Big((1+C_2)^{d/(d-1)}+C_0\Big)\Big(\frac{\epsilon}{r}\Big)^{d/(d-1)}.
		\end{equation}
		giving a contradiction if
		\begin{equation}\label{C12_1}
			(1+C_2)^{d/(d-1)}+C_0 < C_1.
		\end{equation}
		\item If $C_2\frac{\epsilon}{r} < |S^c| \le 1/2$, then
		\begin{equation}\begin{split}
				\Big(|S^c|+\frac{\epsilon}{r}\Big)^{d/(d-1)} + C_0\Big(\frac{\epsilon}{r}\Big)^{d/(d-1)}\le & \Big((1+\frac{1}{C_2})^{d/(d-1)}+\frac{C_0}{C_2^{d/(d-1)}}\Big)|S^c|^{d/(d-1)} \\
				\le & 2^{-1/(d-1)}\Big((1+\frac{1}{C_2})^{d/(d-1)}+\frac{C_0}{C_2^{d/(d-1)}}\Big)|S^c|. \\
		\end{split}\end{equation}
		giving a contradiction if
		\begin{equation}\label{C12_2}
			\Big(1+\frac{1}{C_2}\Big)^{d/(d-1)}+\frac{C_0}{C_2^{d/(d-1)}}< 2^{1/(d-1)} .
		\end{equation}
	\end{itemize} 
	To determine the choice of $C_1,C_2$, we first choose $C_2$ large enough so that \eqref{C12_2} is satisfied. Then we choose $C_1$ large enough so that \eqref{C12_1} is satisfied. This finishes the proof of this lemma.
	
\end{proof}

\section{Appendix: Proof of Lemma \ref{lem_layer}}\label{sec_reg}

The following lemma is straightforward.
\begin{lemma}\label{lem_Reg0}
	For any $S\subseteq \mathbb{T}^d$ and $r>0$,
	\begin{equation}\label{Reg}
		\bx\in \Reg_r(S) \Leftrightarrow B(\bx;r)\subseteq S_r.
	\end{equation}
	It follows that $S\subseteq \Reg_r(S)$. Furthermore, $S_r = (\Reg_r(S))_r$.
\end{lemma}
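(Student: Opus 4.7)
The plan is to unwind the definitions. Recall that $S_r = \{\by : \dist(\by, S) < r\}$ and $\Reg_r(S) = (S_r^c)_r^c$. The membership $\bx \in \Reg_r(S)$ is therefore equivalent to $\bx \notin (S_r^c)_r$, i.e., $\dist(\bx, S_r^c) \ge r$. I would observe that this last condition says no point within distance less than $r$ of $\bx$ lies in $S_r^c$, which is precisely the assertion $B(\bx; r) \subseteq S_r$. Both directions of the equivalence \eqref{Reg} follow immediately from this chain of reformulations; no real content beyond parsing the definitions is required.

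For the inclusion $S \subseteq \Reg_r(S)$, I would pick $\bx \in S$ and note that every $\by \in B(\bx; r)$ satisfies $\dist(\by, S) \le |\by - \bx| < r$, hence $\by \in S_r$. Thus $B(\bx; r) \subseteq S_r$, and applying the equivalence \eqref{Reg} just established gives $\bx \in \Reg_r(S)$.

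For the final claim $S_r = (\Reg_r(S))_r$, the inclusion $S_r \subseteq (\Reg_r(S))_r$ follows by monotonicity of the $r$-expansion from $S \subseteq \Reg_r(S)$. For the reverse inclusion, I would take $\by \in (\Reg_r(S))_r$, choose $\bx \in \Reg_r(S)$ with $|\by - \bx| < r$, and invoke \eqref{Reg} to conclude $\by \in B(\bx; r) \subseteq S_r$.

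There is no genuine obstacle here: the entire lemma is a direct unpacking of the definition $\Reg_r(S) = (S_r^c)_r^c$ together with the characterization of complements of $r$-expansions via the distance function. The only mild care needed is to track whether the defining inequality for $S_r$ is strict or not, so that the statement $\dist(\bx, S_r^c) \ge r$ is indeed equivalent to $B(\bx; r) \subseteq S_r$ (with $B(\bx; r)$ the open ball), but this works out cleanly with the conventions fixed in \eqref{Sr}.
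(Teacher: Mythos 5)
Your proof is correct: the chain $\bx\in\Reg_r(S)\Leftrightarrow \dist(\bx,S_r^c)\ge r\Leftrightarrow B(\bx;r)\cap S_r^c=\emptyset\Leftrightarrow B(\bx;r)\subseteq S_r$ is exactly the intended unwinding, and the two consequences follow as you state. The paper omits the proof entirely (calling the lemma straightforward), so your write-up simply supplies the argument the authors had in mind, with the strict-inequality convention in \eqref{Sr} handled correctly.
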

This lemma motivates the following definition.
\begin{definition}
	Let $r>0$. A set $S\subseteq \mathbb{T}^d$ is \emph{$r$-regular} if $S=\Reg_r(S)$.
\end{definition}

It is clear from Lemma \ref{lem_Reg0} that $\Reg_r(S)$ is $r$-regular. Next we give some basic properties of $r$-regular sets.
\begin{lemma}\label{lem_Reg}
	Let $S$ be an $r$-regular set. Then
	\begin{enumerate}
		\item $S$ is closed.
		\item For any $\bx\in\partial S$, there exists $\by\in S_r^c$ such that $|\by-\bx|=r$ and $B(\by;r)\cap S=\emptyset$.
		\item For any $0<r'<r$, $S$ is $r'$-regular.
	\end{enumerate}
\end{lemma}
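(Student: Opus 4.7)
The plan is to treat the three parts in order, exploiting throughout the single characterization from Lemma~\ref{lem_Reg0}: $\bx \in \Reg_r(S)$ iff $B(\bx;r) \subseteq S_r$, or equivalently, $\dist(\bx, S_r^c) \ge r$. The underlying fact I will use repeatedly is that $A_r$ is open for every set $A$, so $(A_r)^c$ is closed.

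For part (1), I would simply observe that $S = \Reg_r(S) = (S_r^c)_r^c$ is the complement of the open set $(S_r^c)_r$, hence closed. For part (2), let $\bx \in \partial S$, which is in $S$ by part (1). Take $\bx_n \to \bx$ with $\bx_n \notin S$; since $S = \Reg_r(S)$, there exist $\by_n \in S_r^c$ with $|\bx_n - \by_n| < r$. By compactness of $\bT^d$, a subsequence $\by_n$ converges to some $\by$, which lies in the closed set $S_r^c$ and satisfies $|\bx - \by| \le r$. On the other hand $\by \in S_r^c$ means $\dist(\by, S) \ge r$, so $|\by - \bx| \ge r$ since $\bx \in S$. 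Thus $|\by - \bx| = r$, and $B(\by;r) \cap S = \emptyset$ is just the content of $\by \in S_r^c$.

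For part (3), given $0 < r' < r$, I need to show $\Reg_{r'}(S) \subseteq S$ (the reverse inclusion is free from Lemma~\ref{lem_Reg0}). Using $S = \Reg_r(S)$, this reduces to the geometric claim: if $B(\bx;r') \subseteq S_{r'}$, then $B(\bx;r) \subseteq S_r$. Fix $\by$ with $|\by - \bx| < r$. If $|\by - \bx| < r'$ we are done since $S_{r'} \subseteq S_r$. Otherwise, I would pick a geodesic from $\bx$ to $\by$ (well-defined since $|\by - \bx| < r \le 1/2$) and choose the point $\bz$ on it at distance $r' - \epsilon$ from $\bx$ for a small $\epsilon > 0$ to be chosen. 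Then $\bz \in B(\bx;r') \subseteq S_{r'}$, so there exists $\bw \in S$ with $|\bz - \bw| < r'$, and
\begin{equation}
|\by - \bw| \le |\by - \bz| + |\bz - \bw| < (|\by - \bx| - r' + \epsilon) + r' = |\by - \bx| + \epsilon.
\end{equation}
Taking $\epsilon < r - |\by - \bx|$ gives $|\by - \bw| < r$, i.e., $\by \in S_r$.

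I expect part (3) to be the main subtlety: one has to be careful that the geodesic interpolation produces a point strictly inside $B(\bx;r')$ so as to invoke the hypothesis, while still ending up with a strict inequality $|\by - \bw| < r$ at the end. The slack parameter $\epsilon$ handles both issues simultaneously. The only mild assumption needed is $r \le 1/2$ so the torus geodesic from $\bx$ to $\by$ is an honest straight segment; this is harmless since the statement is intended in the regime where the regularization operation is nontrivial.
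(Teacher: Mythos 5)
Your proof is correct, and the overall structure (apply Lemma~\ref{lem_Reg0}, use that $r$-expansions are open) matches the paper. The interesting divergence is in item (2): the paper argues by contradiction, assuming $\dist(\bx,S_r^c)>r+\epsilon$ and showing that $S\cup B(\bx;\epsilon)$ would then also be $r$-regular and equal to $\Reg_r(S)=S$, forcing $B(\bx;\epsilon)\subseteq S$ and contradicting $\bx\in\partial S$. You instead pick $\bx_n\notin S$ converging to $\bx$, extract witnesses $\by_n\in S_r^c$ with $|\bx_n-\by_n|<r$, and pass to a convergent subsequence, sandwiching $|\bx-\by|$ between $r$ and $r$. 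Your compactness route is arguably more direct and avoids the auxiliary set $S\cup B(\bx;\epsilon)$; the paper's route is slightly more elementary in that it never invokes sequential compactness. For item (3) you are doing essentially what the paper does, the paper just quotes $B(\bx;r)=(B(\bx;r'))_{r-r'}$ and $(S_{r'})_{r-r'}=S_r$ as obvious facts while you unpack the triangle-inequality content via the $\epsilon$-slack trick. One small remark: the caveat $r\le 1/2$ in your part (3) is unnecessary --- the argument only needs that a minimizing geodesic exists between any two points of $\bT^d$ and that a point at prescribed intermediate distance can be selected on it, which holds for any $r$ by compactness of the torus.
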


\begin{proof}
	Item 1: It is clear that $(S_r^c)_r$ is open, and thus $S=\Reg_r(S)=(S_r^c)_r^c$ is closed.
	
	Item 2: Take $\bx\in\partial S\subseteq S$. We claim that $\dist(\bx,S_r^c)=r$. It is clear that $\dist(\bx,S_r^c)\ge r$. Suppose $\dist(\bx,S_r^c)> r+\epsilon$ for some $\epsilon>0$, then $B(\bx;r+\epsilon)\subseteq S_r$, and then $(S\cup B(\bx;\epsilon))_r=S_r$. Therefore $S\cup B(\bx;\epsilon)\subseteq \Reg_r(S\cup B(\bx_0;\epsilon)) = \Reg_r(S) = S$, contradicting the assumption that $\bx\in \partial S$. Therefore we see that $\dist(\bx,S_r^c)=r$.
	
	Since $S_r^c$ is closed, $\dist(\bx,S_r^c)=r$ is achieved at some $\by\in S_r^c$, i.e., $|\by-\bx|=r$. We claim that $B(\by;r)\cap S=\emptyset$. Suppose not, then there exists some $\bz\in S$ with $|\by-\bz|<r$. This contradicts $\by\in S_r^c$. Therefore we get $B(\by;r)\cap S=\emptyset$.
	
	Item 3: Let $\bx\in \Reg_{r'}(S)$. Then by \eqref{Reg}, $B(\bx;r')\subseteq S_{r'}$. Then $B(\bx;r)=(B(\bx;r'))_{r-r'} \subseteq (S_{r'})_{r-r'}=S_r$. Therefore $\bx\in \Reg_r(S)=S$ since $S$ is $r$-regular. Therefore $\Reg_{r'}(S)\subseteq S$, i.e., $S$ is $r'$-regular.
	
	%Item 4: $\Reg_r(S_r^c) = ((S_r^c)_r^c)_r^c = (\Reg_r(S))_r^c = S_r^c$. Therefore $S_r^c$ is $r$-regular.
	
\end{proof}

\begin{proof}[Proof of Lemma \ref{lem_layer}]
	We may assume that $S$ is $r$-regular because $r$-regularizing $S$ would make $S$ larger with $S_r,S_{2r}$ remaining the same.
	
	By the definition of $S_r$, it is straightforward to see
	\begin{equation}
		\widebar{S_{0.9r}\backslash S}\subseteq \bigcup_{\bx\in\partial S} B(\bx;r).
	\end{equation}
	Since $ \widebar{S_{0.9r}\backslash S}$ is compact, one can apply Vitali covering lemma to get a finite collection $B(\bx_j;r)$ for $j=1,\dots,n$ such that $\{B(\bx_j;r/3)\}$ is disjoint, and  $\widebar{S_{0.9r}\backslash S}\subseteq \bigcup_{j=1}^n B(\bx_j;r)$. For every $j$, we apply items 2 and 3 of Lemma \ref{lem_Reg} to see that there exists $\by_j$ such that $|\bx_j-\by_j|=r/3$ and $B(\by_j;r/3)\cap S=\emptyset$. Since $B(\bx_j;r/3)\subseteq S_r$, we see that
	\begin{equation}
		|B(\bx_j;r/3)\cap (S_r\backslash S)| = |B(\bx_j;r/3)\backslash S| \ge |B(\bx_j;r/3)\cap B(\by_j;r/3)| = c r^d,
	\end{equation}
	where $c$ only depends on $d$. Since $\{B(\bx_j;r/3)\}$ is disjoint, we get
	\begin{equation}
		|S_r\backslash S| \ge \sum_{j=1}^n |B(\bx_j;r/3)\cap (S_r\backslash S)| \ge c n r^d.
	\end{equation}
	
	On the other hand, since $\widebar{S_{0.9r}\backslash S}\subseteq \bigcup_{j=1}^n B(\bx_j;r)$, we see that
	\begin{equation}
		S_{2r}\backslash S\subseteq \bigcup_{j=1}^n B(\bx_j;2.1r).
	\end{equation}
	Therefore
	\begin{equation}
		|S_{2r}\backslash S_r| \le |S_{2r}\backslash S| \le \sum_{j=1}^n|B(\bx_j;2.1r)| = C n r^d,
	\end{equation}
	which finishes the proof.
\end{proof}

%\begin{remark}
%The conclusion of Lemma \ref{lem_layer} would be false without the $r$ regular assumption 
%\end{remark}
\section{Appendix: Explicit formula for $W_s$}
We give an explicit formula for the periodized Riesz potential $W_s$ for $0<s<d$. We will show that the formula (7) in \cite{HSS14} with $\mu(t) = t^{s/2-1}$ indeed gives $W_s$, and obtain the regularity of $W_s$ accordingly.

\begin{lemma}\label{lem_period}
	$W_s$ in \eqref{W} satisfies the following regularity conditions:
	\begin{enumerate}
		\item For any $s<d$, $W_s$ is smooth on $\mathbb{T}^d\backslash \{0\}$.
		\item For $0<s<d$, $W_s(\bx) - c_s|\bx|^{-s}$ is a smooth function near 0, for some $c_s>0$.
		\item For $s=0$, $W_s\in L^p$ for any $1\le p < \infty$, and $\lim_{\bx\rightarrow 0} W_s(\bx)=\infty$.
		\item For $s<0$, $W_s$ is continuous.
	\end{enumerate}
	Furthermore, $W_s$ satisfies ({\bf H1})-({\bf H3}) for any $s<d$.
\end{lemma}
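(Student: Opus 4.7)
The plan is an Ewald-type splitting combined with Poisson summation. For $0<d-s$, the Gamma identity
\begin{equation}
|\bk|^{-d+s} = \frac{\pi^{(d-s)/2}}{\Gamma((d-s)/2)}\int_0^\infty t^{(d-s)/2-1} e^{-\pi t|\bk|^2}\rd{t},\quad \bk\neq 0,
\end{equation}
lets me rewrite each Fourier coefficient. Interchanging with the sum (justified by absolute convergence after splitting the $t$-integral) and applying the classical theta function identity $\sum_{\bk\in\mathbb{Z}^d}e^{2\pi i\bk\cdot\bx-\pi t|\bk|^2}=t^{-d/2}\sum_{\bn\in\mathbb{Z}^d}e^{-\pi|\bx-\bn|^2/t}$ will yield the representation
\begin{equation}
W_s(\bx)=\frac{\pi^{(d-s)/2}}{\Gamma((d-s)/2)}\int_0^\infty t^{(d-s)/2-1}\Bigl[t^{-d/2}\sum_{\bn}e^{-\pi|\bx-\bn|^2/t}-1\Bigr]\rd{t},
\end{equation}
which is (up to a multiplicative constant) formula (7) of \cite{HSS14} with $\mu(t)=t^{s/2-1}$ after the change of variables sending the short-range portion to $t^{-s/2-1}$.

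The main step is an Ewald splitting at $t=1$. For $\int_1^\infty$, both the theta sum and the constant $-1$ decay (respectively remain harmless) fast enough that this piece is an entire function of $\bx\in\mathbb{T}^d$ — this gives a smooth contribution globally. For $\int_0^1$, separate the $\bn=0$ term from the $\bn\neq0$ terms: on any compact neighborhood of $0$ in $\mathbb{T}^d$ where $|\bx-\bn|\ge c>0$ for $\bn\neq 0$, the latter sum contributes a smooth function. The singular behavior at $0$ comes entirely from
\begin{equation}
\int_0^1 t^{-s/2-1} e^{-\pi|\bx|^2/t}\rd{t},
\end{equation}
which under the substitution $u=\pi|\bx|^2/t$ equals $\pi^{-s/2}|\bx|^{-s}\Gamma(s/2,\pi|\bx|^2)$ where $\Gamma(\cdot,\cdot)$ is the upper incomplete gamma. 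The analytic structure of this incomplete gamma then reads off all three regimes at once: for $0<s<d$ it gives $c_s|\bx|^{-s}+O(1)$ (item 2, with $c_s=\pi^{-s/2}\Gamma(s/2)\cdot\frac{\pi^{(d-s)/2}}{\Gamma((d-s)/2)}>0$); for $s=0$ it reduces to the exponential integral $E_1(\pi|\bx|^2)$, which is $-\log(\pi|\bx|^2)+O(1)$ (item 3, logarithmic blow-up, hence $L^p$ for all $p<\infty$); and for $s<0$ the integrand is uniformly bounded, giving a continuous function (item 4). Smoothness on $\mathbb{T}^d\setminus\{0\}$ (item 1) follows because away from the origin every piece above is $C^\infty$ in $\bx$.

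For the properties (H1)–(H3): evenness is immediate from evenness of $\bk\mapsto|\bk|^{-d+s}$; lower semicontinuity and boundedness below follow because the only non-smooth point is $\bx=0$, where $W_s\to+\infty$ (if $0<s$), to $+\infty$ logarithmically (if $s=0$), or continuously takes a finite value (if $s<0$); $L^1$-integrability follows from local integrability of $|\bx|^{-s}$ for $s<d$ plus smoothness away from $0$. Condition (H2) is the definition. For (H3) I would use the decomposition $W_s=c_s|\bx|^{-s}+h(\bx)$ (with $h$ smooth on $\mathbb{T}^d$, $c_s\ge 0$ if $s\ge 0$ and treating the $s<0$ case directly since $W_s$ is bounded) to reduce to the well-known Muckenhoupt-type mean value estimate $|B(\bx;r)|^{-1}\int_{B(\bx;r)}|\by|^{-s}\rd{\by}\le C(|\bx|^{-s}+r^{-s})\le C'(C_1+|\bx|^{-s})$ for $s<d$, which absorbs the smooth remainder $h$ into the constant $C_1$.

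The main obstacle is bookkeeping the convergence at $t\to 0^+$: the two terms inside the bracket in the integral representation each diverge, and only their cancellation gives a finite answer; this is precisely why the splitting $\int_0^1+\int_1^\infty$ together with the $\bn=0$ vs $\bn\neq 0$ separation is needed, so that in $\int_0^1$ the $\bn=0$ piece combines with the $-1$ through the incomplete gamma function to produce the named singularity while the remaining $\bn\neq 0$ lattice sum and the $\int_1^\infty$ tail contribute only smooth remainders.
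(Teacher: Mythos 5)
Your approach is the same Ewald-splitting strategy the paper uses, though in a slightly different guise: after the change of variable $t\mapsto 1/t$, your $\int_0^1$ piece becomes the paper's $w_1 = \sum_{\bj}\int_1^\infty e^{-|\bx-\bj|^2 t}t^{s/2-1}\rd{t}$ and your $\int_1^\infty$ piece becomes their smooth $w_2$. Where you genuinely differ is the range of $s$: you treat all $s<d$ at once, reading the three regimes off the incomplete gamma $\Gamma(s/2,\pi|\bx|^2)$, whereas the paper proves items 1--2 only for $0<s<d$ from the Ewald formula and then reaches $s\le 0$ indirectly via $W_s = W_{s'}*\cdots*W_{s'}$ with $s'=\frac{n-1}{n}d+\frac1n s\in(0,d)$ (and uses the same convolution trick again to get ({\bf H3}) at $s=0$). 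Your route is cleaner in that it avoids the convolution detour entirely, at the cost of slightly more bookkeeping in the $t\to 0^+$ analysis.

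There is, however, a real gap in your justification of the representation itself. You write that interchanging $\sum_{\bk\neq 0}$ with $\int_0^1$ is ``justified by absolute convergence,'' but the double sum/integral
\begin{equation}
\sum_{\bk\neq 0}\int_0^1 t^{(d-s)/2-1}e^{-\pi t|\bk|^2}\rd{t}\ \sim\ \Gamma\!\left(\tfrac{d-s}{2}\right)\pi^{-(d-s)/2}\sum_{\bk\neq 0}|\bk|^{-(d-s)}
\end{equation}
diverges whenever $0\le s<d$ (one needs $d-s>d$, i.e.\ $s<0$, for this to converge). So absolute convergence only licenses the interchange on $\int_1^\infty$, where the extra factor $e^{-\pi|\bk|^2}$ gives Gaussian decay. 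For $\int_0^1$ you need a different justification. The paper sidesteps the issue by \emph{proposing} the Ewald formula and then verifying term-by-term that the Fourier coefficients of $w_1+w_2$ equal $c|\bk|^{-d+s}$; this is the cleanest fix and you should adopt it rather than appeal to absolute convergence.

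A smaller point: in your ({\bf H3}) argument you write $W_s = c_s|\bx|^{-s}+h$ with $c_s\ge 0$ for $s\ge 0$, but at $s=0$ the singular part is $-c\log|\bx|$, not a power; $W_0 - c_0|\bx|^0$ is not smooth. You need to replace the power-law mean value estimate there with the corresponding one for $\log$, namely $|B(\bx;r)|^{-1}\int_{B(\bx;r)}(-\log|\by|)\rd{\by}\le C(1-\log|\bx|)$, which your own incomplete-gamma computation already gives you the structure for.
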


\begin{proof}
	
	First, item 4 is clear since $\hat{W}_s \in \ell^1$ for $s<0$.
	
	Then, assuming $0<s<d$, we claim that 
	\begin{equation}\label{lem_period_1}
		cW_s(\bx) + C = \sum_{\bj\in\mathbb{Z}^d} \int_1^\infty e^{-|\bx-\bj|^2 t}t^{s/2-1}\rd{t} + \sum_{\bk\in\mathbb{Z}^d\backslash \{0\}} e^{2\pi i \bk\cdot \bx} \int_0^1 \pi^{d/2}e^{-\pi^2 |\bk|^2/t}t^{s/2-1-d/2}\rd{t}.
	\end{equation}
	
	It is clear that the RHS of \eqref{lem_period_1} is well-defined on $\mathbb{T}^d$ and finite at every $\bx\in\mathbb{T}^d\backslash \{0\}$. Denoting this two terms as $w_1(\bx)$ and $w_2(\bx)$. We first calculate its Fourier coefficients. It is clear that $\hat{w}_2(\bk)=\int_0^1 \pi^{d/2}e^{-\pi^2 |\bk|^2/t}t^{s/2-1-d/2}\rd{t}$ for any $\bk\in\mathbb{Z}^d\backslash \{0\}$. For $w_1$,
	\begin{equation}\begin{split}
			\hat{w}_1(\bk) = & \int_{\mathbb{R}^d}e^{2\pi i \bk\cdot \bx} \int_1^\infty e^{-|\bx|^2 t}t^{s/2-1}\rd{t} \rd{\bx}=  \int_1^\infty \int_{\mathbb{R}^d}e^{2\pi i \bk\cdot \bx}e^{-|\bx|^2 t} \rd{\bx}\, t^{s/2-1}\rd{t} \\
			= & \int_1^\infty \pi^{d/2}e^{-\pi^2 |\bk|^2/t} t^{s/2-1-d/2}\rd{t}. \\
	\end{split}\end{equation}
	Therefore the $\bk$-th Fourier coefficient of the RHS of \eqref{lem_period_1} is
	\begin{equation}
		\int_0^\infty \pi^{d/2}e^{-\pi^2 |\bk|^2/t} t^{s/2-1-d/2}\rd{t} =\int_0^\infty \pi^{d/2}e^{-\pi^2 |\bk|^2t} t^{-s/2-1+d/2}\rd{t} = c|\bk|^{-d+s},
	\end{equation}
	by a change of variable $t_1=1/t$, and applying the formula $y^{-s/2} = \frac{1}{\Gamma(s/2)}\int_0^\infty t^{s/2-1}e^{-ty}\rd{t}$ with $y=\pi^2|\bk|^2$ and $s$ replaced by $(d-s)$. This verifies the equality in \eqref{lem_period_1} in view of \eqref{W}.
	
	It is clear that $w_2$ is a smooth function since its Fourier coefficients have fast decay at infinity. For $w_1$, one can extract the $\bj=0$ term, which is equal to $c|\bx|^{-s}-\int_0^1 e^{-|\bx|^2 t}t^{s/2-1}\rd{t} $, and write
	\begin{equation}
		w_1(\bx) = c|\bx|^{-s} - \int_0^1 e^{-|\bx|^2 t}t^{s/2-1}\rd{t}+ \sum_{\bj\in\mathbb{Z}^d\backslash \{0\}} \int_1^\infty e^{-|\bx-\bj|^2 t}t^{s/2-1}\rd{t} ,
	\end{equation}
	for $\bx\in [-1/2,1/2)^d$. The term $\int_0^1 e^{-|\bx|^2 t}t^{s/2-1}\rd{t}$ and the last summation are clearly smooth on $[-1/2,1/2)^d$ since $|\bx-\bj|$ is away from 0. Therefore we see that $W_s$ is smooth on $\mathbb{T}^d\backslash \{0\}$ and $W_s(\bx) - c|\bx|^{-s}$ is a smooth function near 0. This proves item 2, as well as the case $0<s<d$ for item 1.
	
	For $s\le 0$, we take $n\in \mathbb{N}$, and notice that \eqref{W} implies
	\begin{equation}
		W_s = \underbrace{W_{s'}*\cdots*W_{s'}}_\text{n times},\quad s' = \frac{n-1}{n}d+\frac{1}{n}s .
	\end{equation}
	For $n$ sufficiently large, we have $0<s'<d$, and item 1 for $s$ follows from item 1 for $s'$ which we have proved. For item 3, we take $s=0$, $n=2$, $s'=\frac{d}{2}$, and notice that items 1, 2 for $s'$ implies $W_{s'}\in L^{2-\epsilon}$ for any $\epsilon>0$. Then it follows from Young's inequality that $W_s\in L^p$ for any $1\le p < \infty$, and the singularity structure given in item 2 for $s'$ shows that $\lim_{\bx\rightarrow 0} W_s(\bx)=\infty$.
	
	By definition, $W_s$ always satisfies ({\bf H2}), and ({\bf H1}) is also clear from items 1-4. For ({\bf H3}), the case $0<s<d$ follows from item 2 and the same local property of the power-law potential $|\bx|^{-s}$. The case $s<0$ follows from item 4. For the case $s=0$, notice that $W_s=W_{s'}*W_{s'}$ with $s'=\frac{d}{2}$, and $W_{s'}$ satisfies ({\bf H3}). Then we see that for sufficiently large $C_1>0$,
	\begin{equation}\begin{split}
			\frac{1}{|B(0;r)|} & \int_{B(\bx;r)}(C_1^2+W_s(\by))\rd{\by} = \frac{1}{|B(0;r)|}((C_1^2+W_s)*\chi_{B(0;r)})(\bx)\\ 
			= & \frac{1}{|B(0;r)|}\Big((C_1+W_{s'})*\big((C_1+W_{s'})*\chi_{B(0;r)}\big)\Big)(\bx) 
			\le C((C_1+W_{s'})*(C_1+W_{s'}))(\bx) \\
			= & C(C_1^2+W_s(\bx)).
	\end{split}\end{equation}
	using ({\bf H3}) and the mean-zero property of $W_{s'}$. This proves ({\bf H3}) for $W_s$.
\end{proof}

\bibliographystyle{alpha}
\bibliography{set1.bib}
\vspace{ 1cm}

\Addresses
\end{document}